\theoremstyle{theorem}
\newtheorem{theorem}{Theorem}
\newtheorem{theoremstar}[theorem]{Theorem*}
\newtheorem{proposition}[theorem]{Proposition}
\newtheorem{lemma}[theorem]{Lemma}
\newtheorem{corollary}[theorem]{Corollary}
\theoremstyle{definition}
\newtheorem{remark}[theorem]{Remark}
\newenvironment{example}
  {\pushQED{\qed}\examplex}
  {\popQED\endexamplex}
\numberwithin{theorem}{section}
\newcommand{\PP}{\mathbb{P}}
\newcommand{\RR}{\mathbb{R}}
\newcommand{\QQ}{\mathbb{Q}}
\newcommand{\CC}{\mathbb{C} }
\newcommand{\ZZ}{\mathbb{Z}}
\newcommand{\NN}{\mathbb{N}}
\newcommand\sbullet[1][.5]{\mathbin{\vcenter{\hbox{%
					\scalebox{#1}{$\bullet$}}}}%
}
\newcommand{\rvline}{\hspace*{-\arraycolsep}\vline\hspace*{-\arraycolsep}}
\title{\bf Likelihood Degenerations}
\author{Daniele Agostini, Taylor Brysiewicz, Claudia Fevola \\
  Lukas K\"uhne, Bernd Sturmfels, and Simon Telen \medskip \\
 {(with an appendix by Thomas Lam)}}
\date{}
\begin{document}

\maketitle
\begin{abstract}
\noindent Computing all critical points of a monomial
on a very affine variety is a fundamental task in algebraic
statistics, particle physics and other fields. The number of critical points is
known as the maximum likelihood (ML) degree. When the  variety 
is smooth, it coincides with the Euler characteristic.
We introduce degeneration techniques
that are inspired by the soft limits in CEGM theory, and we
answer several questions raised in the physics literature.
These pertain to bounded regions in discriminantal arrangements
and to moduli spaces of point configurations. 
We present theory and practise, connecting complex geometry, tropical combinatorics,
 and numerical nonlinear algebra.
\end{abstract}

\section{Introduction} \label{section1}

A {\em very affine variety} $\,X$ is a closed subvariety of an algebraic  torus $(\CC^*)^n$. 
For any integer vector ${\bf u} = (u_1,\ldots,u_n) \in \ZZ^n$, the  Laurent monomial
${\bf z}^{\bf u} = z_1^{u_1} \cdots z_n^{u_n}$ is a regular function on $(\CC^*)^n$, 
and we are interested in the set of critical points of ${\bf z}^{\bf u}$ on~$X$.
The natural approach is via the gradient of the {\em log-likelihood function}
${\rm log}({\bf z}^{\bf u}) = \sum_{i=1}^n u_i {\rm log}(z_i)$. This makes
sense for any complex vector ${\bf u} \in \CC^n$. The coordinates of
$\nabla {\rm log}({\bf z}^{\bf u})$ are rational functions, and we seek points ${\bf z} \in X$
at which that gradient vector lies in the normal space. This leads to
a system of rational function equations whose solutions are the critical points.
 Their number  is independent of ${\bf u}$, provided ${\bf u}$ is generic.
 This is an invariant of $X$, denoted 
 ${\rm MLdegree}(X)$, and known as the {\em maximum likelihood degree}; 
 see \cite{CHKS, GR, Huh13, HS}. Whenever $X$ is smooth,  
 we know from \cite[Theorem 1]{Huh13} that it coincides with the signed Euler characteristic of $X$:
\begin{equation}\label{eq:mldegreeeulchar}
\operatorname{MLdegree}(X) \,\,=\,\, (-1)^{\dim X} \cdot \chi (X).
\end{equation} 

The term \emph{likelihood} comes from statistics \cite{PS}. A discrete statistical model
on $n-1$ states is a subset $M$ of the probability simplex 
$\Delta_{n-2} = \{ (z_1,\ldots,z_{n-1}) \in \RR^{n-1}_{> 0} : z_1 + \cdots + z_{n-1} = 1 \}$.
In algebraic statistics, the model $M$ is a semialgebraic set, and one replaces
$M$ by its Zariski closure $X$. That closure is taken in the  torus $(\CC^*)^n$,
where $z_n = z_1 + \cdots + z_{n-1} $. After collecting data, we write
$u_i \in \NN$ for the number of samples in state $i$. The 
monomial ${\bf z}^{\bf u}$ is the likelihood function. The
goal of likelihood inference is to maximize ${\bf z}^{\bf u}$ over $M$.
Thus, statisticians seek those critical points of ${\rm log}({\bf z}^{\bf u})$
that are real and positive. The ML degree of $X$ is an algebraic
complexity measure for performing likelihood inference with the model~$M$.

In particle physics, very affine varieties arise from \emph{scattering equations} 
\cite{CEGM, CHY, CUZ, ST}.
These equations are central to
the Cachazo--He--Yuan (CHY) formulas for biadjoint scalar amplitudes. Their solutions are the critical points of $\log({\bf z}^{\bf u})$ on the moduli space 
$\mathcal{M}_{0,m} = \text{Gr}(2,m)^\circ/(\CC^*)^m$ of genus zero curves with $m$ marked points. In the more general context of Cachazo--Early--Guevara--Mizera (CEGM) amplitudes \cite{CEGM}, one considers the very affine varieties $X(k,m) = \text{Gr}(k,m)^\circ/(\CC^*)^m$ with $k \geq 2$. These are moduli spaces of $m$ points in $\mathbb{P}^{k-1}$ in linearly general position, natural generalizations of ${\cal M}_{0,m}$.  While the maximum likelihood degree ${\rm MLdegree}(X(2,m))$ is known to be $(m-3)!$, much less is known for $k \geq 3$.
The connection between maximum likelihood and scattering equations was developed in
\cite{ST}.

The term \emph{degeneration} comes from algebraic geometry. It represents the idea of studying the properties of a general object $X_t$ for $t\ne 0$ by letting it degenerate to a more special object $X_0$, which is often easier to understand. This corresponds to finding a nice compactification of the variety  $\mathcal{X}^0 = \cup_{t\ne 0} X_t$ to a variety $\mathcal{X}$, by adding the special fiber $X_0$. There are many possibilities for such constructions. Most relevant in our context are the tropical compactifications in
\cite[\S 6.4]{MS} and their connection to likelihood inference in~\cite{VS}.

Cachazo, Umbert and Zhang \cite{CUZ} introduced a class of degenerations
called {\em soft limits}. 
The present article arose from our desire to gain
a mathematical understanding of that construction from physics.
We succeeded in reaching that understanding, and we here share it
from multiple perspectives:
algebraic geometry, combinatorics and numerical mathematics.

We now give an overview of our contributions.
We start in Section \ref{section2} with a discussion of the
moduli space $X(k,m)$, and how it fits into the
framework of very affine varieties. 
The soft limits in \cite{CUZ} are special instances 
 of likelihood degenerations that are well adapted to the geometry of configurations.
We explain how these are related to the
deletion maps 
\begin{equation}
\label{eq:deletion}
\pi_{k,m}\,:\, X(k,m+1)\, \rightarrow \,X(k,m).
\end{equation}
 These maps are
shown to be stratified fibrations.
We discuss both the strata and the fibers.
This sets the stage for
the computation of Euler characteristics by combinatorial methods.

The fibers of (\ref{eq:deletion}) are complements of
discriminantal hyperplane arrangements.
By Varchenko's Theorem \cite[Theorem 3]{CHKS},
the ML degree is the number of bounded regions.
 In Section \ref{section3}
we focus on the generic fiber. This arises from
the $\binom{m}{k-1}$ hyperplanes spanned by $m$ generic points in $\PP^{k-1}$.
We show that, for fixed $k$, the number of bounded regions is a polynomial in $m$
of degree $(k-1)^2$. This polynomial was denoted
${\rm Soft}_{k,m}$ in \cite{CUZ}. We display it  explicitly for  $k \leq 7$.
Our result extends to all coefficients of the characteristic polynomial
(Theorem \ref{thm:arepolynomials}). Its proof rests on constructions of
Koizumi, Numata and Takemura in~\cite{Koizumi}.

In Section \ref{section4} we turn from fibers to strata in the base
of (\ref{eq:deletion}). These can be modeled as
strict  realization spaces of matroids.
Such matroid strata are very affine varieties.
In  Theorem~\ref{thm:matroid_ml} we furnish a comprehensive study
for small matroids 
of  rank $k$ on $m$ elements. Note that the uniform
matroid corresponds to $X(k,m)$.
We compute the ML degrees for  all matroids in the range
 $k=3, m \leq 9$ and $k=4,m=8$.
 For larger matroids this would become infeasible by
{\em Mn\"ev's Universality Theorem}
\cite[\S 6.3]{BokStu}.
Our result is achieved by integrating
software tools from 
computer algebra \cite{ZariskiFrames}, combinatorics  \cite{MMIB_DB},
and certified numerics \cite{BRT, HCjl}.

In Section \ref{section5} we examine the space
$X(3,m)$ of $m$ points in general position in the projective plane $\PP^2$.
Cachazo,  Umbert and Zhang \cite{CUZ} report that 
the  ML degree of $X(3,m)$ equals
$ 26$ for $m=6$, $ \,1\,272$  for $m=7$, and $188\,112$ for $m=8$.
These numbers are denoted $\mathcal{N}^{(3)}_{m}$ in~\cite{CUZ}.
Thomas Lam (Appendix \ref{appendixA}) derived them using finite field methods, and he also computed
$$ \mathcal{N}^{(3)}_{9} \,\, = \,\, {\rm MLdegree}(X(3,9))  \,\,  =  \,\, 74\, 570\, 400. $$
We present a topological proof
of these results, and we prove the conjecture in \cite[\S 6]{CUZ}.
This involves a careful study of the
stratified fibration (\ref{eq:deletion}).
The combinatorics we develop along the way,
such as posets of strata and M\"obius functions,
 should be of independent interest.

Section \ref{section6} is dedicated to configurations of eight points in projective $3$-space.
Based on our computational results, we predict that the ML degree of $X(4,8)$ is equal to
$5\,211\,816$. This is the number of solutions to the likelihood equations,
found numerically by the software {\tt HomotopyContinuation.jl} \cite{HCjl}.
We present a detailed analysis of the tropical geometry of
 soft limits in this case. This confirms the combinatorial predictions made in \cite[Table~2]{CUZ},
 and offers a blueprint for future research that connects tropical geometry and numerical analysis.

In Section \ref{section7} we turn to algebraic statistics, and we follow up on earlier
work on likelihood degenerations due to Gross and Rodriguez \cite{GR}.
We introduce the tropical version of maximum likelihood estimation for
discrete statistical models. This arises by replacing
the real numbers $\RR$ by the Puiseux series field $\RR\{\! \{t \} \! \}$,
in both the data and the solutions.
Our main result (Theorem \ref{thm:tropMLE}) characterizes
the tropical MLE for linear discrete models. This involves the
intersection of two Bergman fans, corresponding to a dual pair of matroids.

In Section~\ref{section8} we present numerical methods for likelihood degenerations,
with an emphasis on recovering the description of tropical curves  from floating point coordinates.
This extends the tropical MLE approach in Section~\ref{section7} from linear models to other very affine
varieties, and it allows us to find tropical solutions to scattering equations in particle physics.

The paper concludes with Appendix \ref{appendixA}, written by Thomas Lam,
which gives the computation of ML degrees for $k=3$ using finite field methods,
based on the Weil conjectures.

The code used in this paper together with computational results are available~at
\[
	\texttt{\href{https://mathrepo.mis.mpg.de/LikelihoodDegenerations}{https://mathrepo.mis.mpg.de/LikelihoodDegenerations}}.
\]

\section{Very affine varieties of point configurations} \label{section2}

This work revolves around a very affine variety arising in particle physics
\cite{CEGM, CHY, CUZ}, namely
  the moduli space $X(k,m)$ of $m$ points in $\mathbb{P}^{k-1}$ in linearly general position. More explicitly, this moduli space parametrizes $m$-tuples $[P]=[P_1,\dots,P_m]$ of points $P_i\in \mathbb{P}^{k-1}$ such that no $k$ of them lie on a
 hyperplane.    Moreover, the $m$-tuples are considered only up to the action of
    ${\rm GL}(k,\mathbb{C})$.     The dimension of $X(k,m)$ equals $(k-1)(m-k-1)$.
        Observe that $X(2,m)$ is  the familiar
        $(m-3)$-dimensional moduli space $\mathcal{M}_{0,m}$ of $m$ distinct labeled points on $\mathbb{P}^1$.

We next present a parametrization which shows that $X(k,m)$ is very affine.
Taking homogeneous coordinates for the points $P_i$, any $m$-tuple $[P]$ as above can be represented by a 
complex $k\times m$ matrix whose $k \times k$ minors are all nonzero. Two such representations are equivalent if and only if they differ by left multiplication by
${\rm GL}(k,\mathbb{C})$, or by a rescaling of the columns by an element in the torus $(\mathbb{C}^*)^m$. Hence, this identifies $X(k,m)$ as the quotient 
\begin{equation} \label{eq:asthequotient} X(k,m) \,= \, {\rm Gr}(k,m)^\circ/(\mathbb{C}^*)^m,
\end{equation}
where ${\rm Gr}(k,m)$ is the Grassmannian of $k$-dimensional subspaces in $\CC^m$ and
${\rm Gr}(k,m)^\circ$ is the open cell where all Pl\"ucker coordinates $p_{i_1 \dots i_k}$ are nonzero. 
We can uniquely write the homogeneous coordinates of an $m$-tuple $[P]\in X(k,m)$ as the columns of the 
$k \times m$ matrix
\begin{equation}\label{eq:generalmatrix}
M_{k,m} \,\,=\,\,  \begin{bmatrix} 
0 & 0 & 0 & \dots & 0 & (-1)^k & 1 & 1 & 1 & \dots & 1\\ 
0 & 0 & 0 & \dots & (-1)^{k-1} & 0 & 1 & x_{1,1} & x_{1,2} & \dots & x_{1,m-k-1} \\ 
\vdots & \vdots & \vdots & \iddots & \vdots & \vdots & \vdots & \vdots & \vdots & \iddots & \vdots\\ 
0 & 0 & -1 & \dots & 0 & 0 & 1 & x_{k-3,1} & x_{k-3,2} & \dots & x_{k-3,m-k-1}\\ 
0 & 1 & 0 & \dots & 0 & 0 & 1 & x_{k-2,1} & x_{k-2,2} & \dots & x_{k-2,m-k-1}\\ 
-1 & 0 & 0 & \dots & 0 & 0 & 1 & x_{k-1,1} & x_{k-1,2} & \dots & x_{k-1,m-k-1}\\
\end{bmatrix} , 
\end{equation}
provided all maximal minors $p_{i_1 \dots i_k}$ of this matrix are nonzero. 
The antidiagonal matrix in the left $k \times k$ block was chosen so that
each unknown $x_{i,j}$ equals such a minor for $i_1 < \cdots < i_k$.
This identifies $X(k,m)$ as an open subset of the torus $(\CC^*)^{(k-1)(m-k-1)}$
with coordinates $x_{i,j}$.
 The Pl\"ucker embedding realizes $X(k,m)$ as a closed subvariety of a high-dimensional torus:
 \begin{equation}\label{eq:torusembedding} 
X(k,m) \hookrightarrow (\mathbb{C}^*)^{\binom{m}{k}}, \qquad M_{k,m} \mapsto \left( p_{i_1\dots i_k} \right) .
\end{equation}
The corresponding log-likelihood function, known as the \emph{potential function} in physics, is
\begin{equation}\label{eq:potentialfunction} 
\mathcal{L}_{k,m} \,\,=\,\, \sum_{i_1\dots i_k} u_{i_1\dots i_k} \cdot  \log( p_{i_1\dots i_k} ).
\end{equation}
The critical point equations, known as \emph{scattering equations} in physics, are given by
\begin{equation}\label{eq:scattering} 
\qquad \frac{\partial \mathcal{L}_{k,m}}{ \partial x_{i,j}} \,=\, 0 \qquad \text{ for }\, 1\leq i \leq k-1\, \,{\rm and} \,\, 1\leq j \leq m-k-1 .
\end{equation}
This is a system of rational function equations.
One way to compute the ML degree of $X(k,m)$ is by counting the number of solutions 
 for general values of the parameters $u_{i_1\dots i_k}$. This can be done by finding explicit
 solutions to (\ref{eq:scattering}), notably by the numerical approach described
 in \cite[\S 3]{ST} which rests on the software {\tt HomotopyContinuation.jl}~\cite{HCjl}.

\begin{remark} 
{\em Likelihood degenerations} arise when the coefficients $u_{i_1 \cdots i_k}$
depend on a parameter $t$, and one studies the behavior of the
solutions in the limit $t \rightarrow 0$. A special instance
are the soft limits of Cachazo et al.~\cite{CUZ}. This likelihood degeneration
is presented in (\ref{eq:L48withparameter}). We explain in Section~\ref{section6}
how soft limits  are related to  the methods in this section.
Sections \ref{section7} and \ref{section8} explore
arbitrary likelihood degenerations, with a view towards statistics and numerics.
\end{remark}

Another approach to computing the ML degree is topological, through the Euler characteristic. 
Indeed, since the variety $X(k,m)$ is smooth,
the formula \eqref{eq:mldegreeeulchar} applies and gives
\begin{equation}\label{eq:eulerxkn}
 \operatorname{MLdegree}( X(k,m)) \,=\, (-1)^{(k-1)(m-k-1)}\cdot \chi(X(k,m)).
\end{equation} 

Our strategy to compute the Euler characteristic is to exploit the  deletion map in (\ref{eq:deletion}):
\[ \pi_{k,m}\colon X(k,m+1) \longrightarrow X(k,m), \qquad [P_1,\dots,P_m,P_{m+1}] \mapsto [P_1,\dots,P_{m}] .\]
This approach relies on the fact that the Euler characteristic is multiplicative along fibrations. 

\begin{example}[$k=2$]
	The space $X(2,3)$ is a single point, so $\chi(X(2,3))=1$. For $m\geq 3$ we consider the map $\pi_{2,m}\colon X(2,m+1) \longrightarrow X(2,m)$. A point in $X(2,m)$ is identified with an  $m$-tuple $\left[P_1,\dots,P_{m}\right]$, where  $P_i\in \mathbb{P}^1$ are pairwise distinct, and  $P_1=(0:-1), P_2=(1:0),P_3 = (1:1)$ are fixed. The fiber over this point equals $F = \mathbb{P}^1\setminus \{ P_1,\dots,P_m \}$. This has Euler characteristic $2-m$. 
	All fibers are homeomorphic to $F$, so the map $\pi_{2,m}$ is a fibration.
	Multiplicativity of the Euler characteristic along fibrations implies
	 $\chi(X(2,m+1)) = \chi(F)\cdot\chi(X(2,m))= (2-m)\cdot \chi(X(2,m))$.
	 By induction on $m$, we conclude that $\chi(X(2,m)) = (-1)^{m-3}(m-3)!$. 
\end{example}

Our main difficulty for $k \geq 3$ is that the deletion map $\pi_{k,m}$ is generally not a fibration.
But it is a {\em stratified fibration}, that is,
it is a map $f\colon X\to Y$ of complex algebraic varieties such that $Y$ has	
a  stratification $\mathcal{S} = \{ S \subseteq Y \}$ by finitely many closed strata, with the property that, over each open stratum $S^\circ = S\setminus \bigcup_{S'\subsetneq S} S'$, the map is a fibration with fiber~$F_S$.
 
The set $\mathcal{S}$ of all strata $S$ in a stratified fibration  $f\colon X\to Y$  is naturally a poset,
  ordered by inclusion. We can use this combinatorial structure to compute the Euler characteristic of $X$.
 The following result is standard but we include it here for the sake of reference. 

\begin{lemma}\label{lemma:stratifiedfibration}
	Let $f\colon X \to Y$ be a stratified fibration and $\mu$ the M\"obius function of  $\mathcal{S}$.
	 Then 
	\begin{align*}
	\chi(X) \,\,&= \,\,\, \sum_{S\in \mathcal{S}} \chi(S) \,\cdot \! \!\!\!\sum_{S'\in \mathcal{S},S'\supseteq S} \!
	\!\! \mu(S,S')\chi(F_{S'}) \,\,\,\\ 
	&= \,\,\,
	\chi(Y)\cdot \chi(F_Y)\,+\,\sum_{S\in \mathcal{S}} \chi(S) \,\cdot \!\!\!\!\!\sum_{S'\in \mathcal{S},S'\supseteq S} \!\! \mu(S,S')\cdot(\chi(F_{S'}) - \chi(F_Y) ).
	\end{align*}
\end{lemma}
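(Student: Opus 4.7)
The plan is to combine additivity of compactly supported Euler characteristic with multiplicativity along honest fibrations, and then apply Möbius inversion on $(\mathcal{S},\subseteq)$ to pass from the locally closed open strata $S^\circ = S \setminus \bigcup_{S'\subsetneq S} S'$ to the closed strata $S$. The only nontrivial inputs are standard facts for complex algebraic varieties: $\chi$ is additive on locally closed decompositions, and multiplicative on fibrations.

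First, since $Y = \bigsqcup_{S \in \mathcal{S}} S^\circ$ is a disjoint decomposition into locally closed subvarieties, so is $X = \bigsqcup_{S \in \mathcal{S}} f^{-1}(S^\circ)$. Additivity gives $\chi(X) = \sum_S \chi(f^{-1}(S^\circ))$, and since $f$ restricts to a fibration with fiber $F_S$ over $S^\circ$, multiplicativity gives $\chi(f^{-1}(S^\circ)) = \chi(S^\circ)\cdot \chi(F_S)$. Thus
\[
\chi(X) \;=\; \sum_{S\in \mathcal{S}} \chi(S^\circ)\,\chi(F_S).
\]

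Next, the same additivity applied to the decomposition $S = \bigsqcup_{S'\subseteq S}(S')^\circ$ yields $\chi(S) = \sum_{S'\subseteq S}\chi((S')^\circ)$. Möbius inversion on the finite poset $(\mathcal{S},\subseteq)$ therefore gives $\chi(S^\circ) = \sum_{S'\subseteq S}\mu(S',S)\,\chi(S')$. Substituting this into the previous display and swapping the order of summation (renaming indices) yields the first formula of the lemma. For the second formula, I use the identity $\sum_{S'\supseteq S}\mu(S,S') = \delta_{S,Y}$ (valid because $Y$ is the top element of $\mathcal{S}$) to write
\[
\chi(Y)\,\chi(F_Y) \;=\; \sum_{S\in \mathcal{S}} \chi(S)\,\chi(F_Y)\!\!\!\sum_{S'\in \mathcal{S},\,S'\supseteq S}\!\!\!\mu(S,S'),
\]
and subtract this from the first formula to get the claimed rearrangement.

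The main obstacle is purely bookkeeping: keeping the direction of the Möbius function straight (I use the convention $g(x) = \sum_{y\le x} f(y) \iff f(x) = \sum_{y\le x}\mu(y,x)g(y)$) and distinguishing which variable indexes the base versus the fiber after the index swap. No genuine geometric difficulty arises beyond quoting the two standard properties of $\chi$ for complex algebraic varieties.
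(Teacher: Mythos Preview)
Your proof is correct and follows essentially the same route as the paper's own proof: both start from $\chi(X)=\sum_{S}\chi(S^\circ)\chi(F_S)$ via additivity and multiplicativity, invert $\chi(S)=\sum_{S'\subseteq S}\chi((S')^\circ)$ by M\"obius inversion, swap the order of summation, and then invoke $\sum_{S'\supseteq S}\mu(S,S')=\delta_{S,Y}$ for the second equality. The only differences are cosmetic (you phrase the second equality as subtracting a telescoping sum, the paper phrases it as the M\"obius sum vanishing off the top element), so there is nothing further to compare.
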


\begin{proof}
By the excision property of the Euler characteristic, together with the multiplicativity along fibrations, we know
that $\chi(X) = \sum_{S'\in  \mathcal{S}} \chi(S'^\circ)\cdot \chi(F_{S'})$.
We can rewrite this as follows: for any closed stratum $S'\in \mathcal{S}$ we have $\chi(S') = \sum_{S'\supseteq S} \chi(S^\circ)$. The M\"obius inversion formula yields $\chi(S'^\circ) = \sum_{S'\supseteq S} \mu(S,S')\chi(S)$.
 Plugging this into the previous formula, we obtain the first equality in Lemma \ref{lemma:stratifiedfibration}. The second equality comes 
 from the definition of the M\"obius function, which stipulates that
$\sum_{S'\supseteq S}\mu(S,S') = 0$, for a fixed stratum $S\in \mathcal{S}\setminus \{Y\}$.
\end{proof}

Lemma \ref{lemma:stratifiedfibration}
 can be used to compute the ML degree of a very affine variety $X$ that is smooth. If we are
given a stratified fibration $X \rightarrow Y$, then the computation
 reduces to the topological task of computing the Euler characteristics of the fibers and of the strata, together with the combinatorial task of computing the M\"obius function of the poset.

We shall apply this method to the deletion map in (\ref{eq:deletion}). For this, we need to argue
that this map is a stratified fibration, which requires us to identify the strata and the fibers. The fibers are
 complements of discriminantal arrangements, to be discussed thoroughly in Section \ref{section3}.
 The strata are given by a certain matroidal stratification, see Section \ref{section4}. We describe the general setting here, and we will give explicit computations for the case $k=3$ in Section \ref{section5}.

First, let us consider the fibers. Given a representative $[P] = [P_1,\dots,P_m]$ for an element  of $X(k,m)$, the corresponding
discriminantal arrangement is the set $\mathcal{B}(P)$ of all hyperplanes linearly spanned by any set of $k-1$ points $P_{i_1},\dots,P_{i_{k-1}}$. Observe that these span a hyperplane because of the requirement 
that the points are in linearly general position. The fiber of $\pi_{k,m}$ over  $[P]$ is 
 the complement  of this  hyperplane arrangement:
 \begin{equation}
 \label{eq:fiberB}
 \pi_{k,m}^{-1}([P]) \,\,\,\cong \,\,\,\mathbb{P}^{k-1} \setminus \!\! \bigcup_{H\in \mathcal{B}(P)} H .
 \end{equation}
The Euler characteristic of such a complement is found with the methods in Section~\ref{section3}.
One subtle aspect of this computation is that all objects are taken up to the action of~${\rm GL}(k, \CC)$.

This description also indicates the appropriate stratification of $X(k,m)$: it depends on the linear dependencies satisfied by the $\binom{m}{k-1}$ hyperplanes in the arrangement $\mathcal{B}(P)$. This comes from a certain matroid stratification.
 Let us give an example for the case $k=3$:

\begin{example}\label{example:X36} Consider $\pi_{3,6}\colon X(3,7) \to X(3,6)$. To any  $[P]$
in $ X(3,6)$ we  associate the  arrangement  of $15$ lines $\overline{P_i P_j}$.
Move one of them to the line at infinity in $\PP^2$. For generic~$[P]$, the number of bounded regions
in the resulting  arrangement of $14$ lines is the Euler characteristic of the fiber
 $\pi_{3,6}^{-1}([P])$. This number is $42$, as we shall see in Section~\ref{section3}.
  For special fibers the Euler characteristic drops. Consider $[P] $ where the
    lines $ \overline{P_1 P_2}, \,\overline{P_3 P_4},\,\overline{P_5 P_6}$ are concurrent.
  These three lines are not in  general position \cite[Figure 4-3]{BokStu}.
      The Euler characteristic of the fiber of $\pi_{3,6}$ over $[P]$ is $41$.
  Our stratification of $X(3,6)$ must account for this.
  \end{example}

In general, the situation is as follows.  Each point of $X(k,m)$ is represented by a
$k \times m$ matrix $M_{k,m}$ as in  \eqref{eq:generalmatrix}. We consider  
the $(k-1)$st exterior power $\wedge_{k-1} M_{k,m}$ of that matrix. That new matrix
has $k$ rows and $\binom{m}{k-1}$ columns, and its entries are the signed
$(k-1) \times (k-1)$ minors of $M_{k,m}$. Each column of $\wedge_{k-1} M_{k,m}$ is the normal vector
to the hyperplane spanned by $k-1$ points $P_{i_1},\ldots,P_{i_{k-1}}$.
Here we allow for the possibility that the column vector is zero, which means
that $P_{i_1},\ldots,P_{i_{k-1}}$ lie on a $(k-2)$-plane in $\PP^{k-1}$.
Taking the exterior power is equivariant with respect to the action of ${\rm GL}(k,\CC)$,
so we obtain a map of Grassmannians
\begin{equation}
\label{eq:mapgrass}
\begin{matrix}
 {\rm Gr}(k,m)\, \rightarrow \,{\rm Gr}\bigl(k,\binom{m}{k-1}\bigr). \end{matrix}
 \end{equation}
 
 We now consider the matroid stratification \cite[\S 4.4]{BokStu} of the big Grassmannian
$ {\rm Gr}\bigl(k,\binom{m}{k-1}\bigr)$. The strata correspond to arrangements
of $\binom{m}{k-1}$ hyperplanes in $\PP^{k-1}$ that have a fixed intersection lattice.
We next  consider the  pullback of this matroid stratification under the map (\ref{eq:mapgrass}).
This is a very fine stratification of ${\rm Gr}(k,m)$. Its strata correspond to point
configurations whose discriminantal hyperplane arrangement has a fixed intersection lattice.
This stratification is much finer than the matroid stratification of ${\rm Gr}(k,m)$. In particular,
it defines a highly nontrivial stratification of the open cell ${\rm Gr}(k,m)^\circ$. This stratification of ${\rm Gr}(k,m)^\circ$ is
compatible with the action of the torus $(\CC^*)^m$, given that all our constructions are based on
projective geometry. Passing to the quotient in
(\ref{eq:asthequotient}), let $\mathcal{S}_{k,m}$ denote the
induced stratification of $X(k,m)$.
We call this the {\em discriminantal stratification} of  the space $X(k,m)$.

\begin{proposition}\label{prop:fibration}
	The deletion map $\pi_{k,m}\colon X(k,m+1) \to X(k,m)$ defines a stratified fibration with respect to the 
	discriminantal stratification on the very affine variety $X(k,m)$.
\end{proposition}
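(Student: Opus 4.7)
The proof reduces to showing that for each open stratum $S^\circ$ of $\mathcal{S}_{k,m}$, the restriction $\pi_{k,m}^{-1}(S^\circ)\to S^\circ$ is a locally trivial fibration with fiber of a fixed homeomorphism type. The plan is to identify this restriction with the complement of a lattice-isotopic family of arrangements, and then invoke a parameterized form of Randell's isotopy theorem.

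First, using (\ref{eq:fiberB}) together with the normal form (\ref{eq:generalmatrix}) to dispense with the $\mathrm{GL}(k,\mathbb{C})$-action, I would realize $\pi_{k,m}^{-1}(S^\circ)$ as the subvariety of $S^\circ\times\mathbb{P}^{k-1}$ consisting of pairs $([P],[Q])$ with $Q$ avoiding every hyperplane $\mathrm{span}(P_{i_1},\ldots,P_{i_{k-1}})\in\mathcal{B}(P)$; the projection to $S^\circ$ is $\pi_{k,m}$ restricted to this stratum. Combined with the description of $\mathcal{S}_{k,m}$ as the pullback of the matroid stratification of $\mathrm{Gr}(k,\binom{m}{k-1})$ under (\ref{eq:mapgrass}), this shows that the intersection lattice of $\mathcal{B}(P)$ is constant as $[P]$ varies over $S^\circ$: the arrangements $\{\mathcal{B}(P)\}_{[P]\in S^\circ}$ form a lattice-isotopic family in the sense of Randell.

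Second, I would apply the parameterized Randell isotopy theorem: a family of projective hyperplane arrangements over a smooth connected base, with constant intersection lattice, has complement forming a locally trivial $C^\infty$ fiber bundle. This is proved via Thom's first isotopy lemma applied to the natural stratification of the incidence variety inside $S^\circ\times\mathbb{P}^{k-1}$ by combinatorial type of the intersection pattern; constancy of the lattice is precisely what makes this stratification Whitney regular and the projection onto $S^\circ$ a proper stratified submersion on each closed piece. Local triviality on the complement follows by restricting the ambient isotopies obtained from Thom's lemma.

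The main obstacle is that matroid strata $S^\circ$ can fail to be smooth, irreducible, or even connected, as warned by Mn\"ev universality (cited in the introduction). This is bypassed by noting that local triviality of $\pi_{k,m}^{-1}(S^\circ)\to S^\circ$ is an analytically-local property on the base, so Randell's argument applies after restriction to any smooth analytic open subset of $S^\circ$, and these patch together; the homeomorphism type of the fiber is automatically constant on each connected component since it depends only on the intersection lattice. This is strong enough to justify the multiplicativity used in Lemma \ref{lemma:stratifiedfibration}, which is the only consequence of Proposition \ref{prop:fibration} that will be invoked later.
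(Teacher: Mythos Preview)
Your approach is more ambitious than the paper's, and in one place it overreaches. The paper's own ``Proof and discussion'' does \emph{not} establish that $\pi_{k,m}$ is a fibration in the topological sense over each open stratum. It observes only that the intersection lattice of $\mathcal{B}(P)$ is constant along $S^\circ$ by construction, hence the Euler characteristic of each fiber is constant (being determined by the characteristic polynomial of the lattice), and this is all that Lemma~\ref{lemma:stratifiedfibration} requires. The paper explicitly concedes, citing Rybnikov \cite{Ryb}, that the homotopy type of the fiber may vary even within a single stratum, and that a finer subdivision into constructible pieces might be needed for a genuine fibration statement.

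Your Randell-type argument does yield local triviality over each smooth, connected piece of $S^\circ$, which is a genuine strengthening. But the sentence ``the homeomorphism type of the fiber is automatically constant on each connected component since it depends only on the intersection lattice'' is exactly the statement Rybnikov's example refutes: there exist complex arrangements with identical intersection lattices whose complements have nonisomorphic fundamental groups. So if a stratum $S^\circ$ has several connected components (which Mn\"ev universality allows), your argument does not force the fibers over different components to be homeomorphic, and the single ``fiber $F_S$'' demanded by the definition of stratified fibration is not available. The paper sidesteps this entirely by retreating to the Euler characteristic, which \emph{is} a lattice invariant. Your final sentence correctly identifies that this weaker conclusion suffices; the cleanest fix is to drop the Randell machinery and argue directly, as the paper does, that constant lattice implies constant $\chi$.
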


\begin{proof}[Proof and discussion]
Each fiber of $\pi_{k,m}$ is the complement of a discriminantal hyperplane arrangement.
These arrangements have fixed intersection lattice as the base point $[P]$ ranges
over an open stratum of $\mathcal{S}_{k,m}$. This implies that the Euler characteristic is constant
on each fiber. This is what we need to be a fibration on each stratum.  It is possible that the homotopy type
varies across such fibers \cite{Ryb}. This would require further subdivisions into constructible sets.
But, all we need here is for the Euler characteristic to be constant.
\end{proof}

In conclusion, the topological approach to computing the ML degree of $X(k,m)$
consists of two parts: computing the Euler characteristic of the fibers, which is done in Section \ref{section3}, and  computing the Euler characteristic of the matroid strata, which is done in Section \ref{section4}.

\section{Discriminantal hyperplane arrangements} \label{section3}

This section concerns the discriminantal hyperplane arrangements that form the fibers (\ref{eq:fiberB}).
Our main focus lies on the generic fibers. Such arrangements have been studied 
 for decades, e.g.~in \cite{Crapo,Falk,Koizumi,OT}. Our general reference on the relevant combinatorics is \cite{Stanley}.

We work with two models in real affine space. First, fix $m$ general points in $\RR^{k-1}$. We write
$\mathcal{A}(k,m)$ for the arrangement of $\binom{m}{k-1}$ hyperplanes spanned any
$k-1$ of these points. Second, we consider the set $\widetilde{\mathcal{B}}(k,m)$ of
 $\binom{m}{k-1}$ hyperplanes through the origin in $\RR^k$
that are spanned by any $k-1$ of the columns of the  matrix $M_{k,m}$ in (\ref{eq:generalmatrix}),
where the $x_{i,j}$ are generic. Let $\mathcal{B}(k,m)$ be the restriction of this hyperplane
arrangement to $\RR^{k-1} \simeq \{x_1 = 1\}$. Thus $\mathcal{B}(k,m)$ is an affine arrangement
of $\binom{m}{k-1}-1$ hyperplanes in $\RR^{k-1}$.
As pointed out by Falk in~\cite{Falk}, the combinatorics of the arrangement  $\mathcal{A}(k,m)$ is
not independent of the choice of the $m$ general points as for some choices the dependencies among
those points admit ``second-order'' dependencies beyond the Grassmann-Pl\"ucker relations.
Following Bayer and Brandt~\cite{BB} we assume that the $m$ points are \emph{very generic} in the sense that such dependencies do not appear.
All such very generic choices yield combinatorially equivalent arrangements $\mathcal{A}(k,m)$, see also~\cite{SY} for a discussion of non-very generic discriminantal arrangements.

In Section~\ref{section5} we will explore non-very generic degenerations of these discriminatal arrangements.
Both $\mathcal{A}(k,m)$ and $\mathcal{B}(k,m)$
are induced by the open cell in the large Grassmannian under
an embedding of the form (\ref{eq:mapgrass}). 
But their combinatorial structures are different. 

\begin{figure}[h]
		\vspace{-0.06cm}
		\begin{center}
			\includegraphics[scale=0.28]{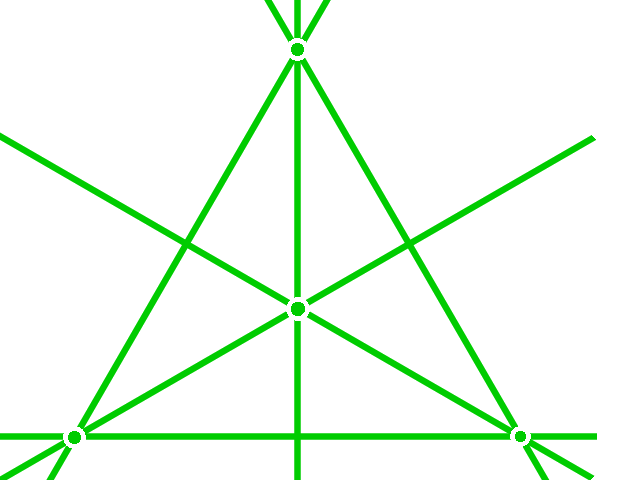} 
			\qquad \qquad
			\includegraphics[scale=0.28]{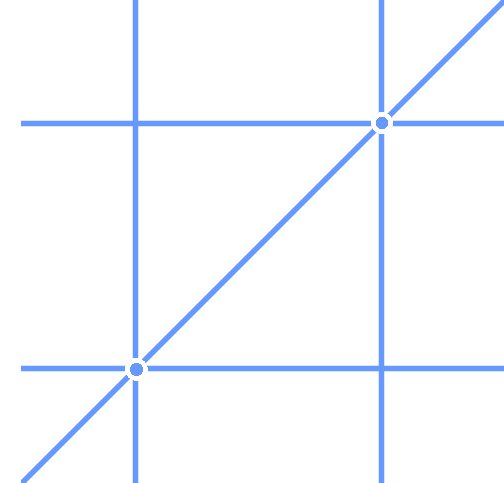} 
		\end{center}
		\vspace{-0.4cm}
		\caption{The discriminantal line arrangements $\mathcal A(3,4)$ (left) and $\mathcal B(3,4)$ (right). 
		\label{fig:twoarrangements}}
	\end{figure}

\begin{example}[$k=3,m=4$]  \label{ex:A34greenblue}
The arrangement $\mathcal{A}(3,4)$ consists of the six lines that are spanned
by four general points in the affine plane $\RR^2$. Its complement in $\RR^2$ consists of~$18$ regions, of which six are bounded. The arrangement $\mathcal{B}(3,4)$ is
obtained from $\mathcal{A}(3,4)$ by moving one of the six lines to infinity. Thus
$\mathcal{B}(3,4)$ consists of five lines, which divide $\RR^2$ into $12$ regions, of which  two are bounded.
The two arrangements are shown in Figure \ref{fig:twoarrangements}.
Their characteristic polynomials \eqref{eq:char_poly} are $\chi_{\mathcal A(3,4)}(t) = t^2-6t+11$ and $\chi_{\mathcal B(3,4)}(t) = t^2 -5t+6$.
\end{example}
 
\begin{table}[h]
\begin{small}
		\[\def\arraystretch{1}{
			\begin{array}{c| r r r r r r r r r r}			
			 & \,m\,=\, 2 & \,\,3 &  \,4 & 5 & 6 & 7 & 8 & 9 \\
			\hline
			k=2    & 0 & 1 &  2 & 3   &4 & 5 & 6 & 7 \\
 			3  &   & 0 & 2 & 13 & 42 & 101  & 205 & 372    \\
 			4   & & & 0 & 6 & 192 & 1858 & 10644&  44595 \\
 			5   & & & & 0 & 24 & 5388 & 204117 & 3458276   \\
 			6   & & & & &  0& 120 & 255180 & 46545915 \\
 			7   & & & & &  & 0 & 720 & 18699210\\
			\end{array}}
		\]
		\end{small}
		\caption{The number of bounded regions of $\mathcal B(k,m)$ for various $k$ and $m$.
		See Remark \ref{rmk:EulerBkm}.
	\label{tab:parallel}}
\end{table}

 It is the second arrangement, $\mathcal B(k,m)$, which plays the center stage for our application.
 We shall reduce its analysis to that of $\mathcal{A}(k,m)$, so we can use results 
 of Koizumi et al.~\cite{Koizumi}.

\begin{remark} \label{rmk:EulerBkm} The generic fiber of the map $\pi_{k,m}$ is the complement of
a hyperplane arrangement in complex projective space $\PP^{k-1}$ that is
combinatorially isomorphic to $\mathcal{B}(k,m)$. Hence the Euler characteristic of the
generic fiber equals the number of bounded regions of $\mathcal{B}(k,m)$.
\end{remark}

We begin with some basics from \cite{Stanley}.
Given an arrangement $\mathcal A$ of hyperplanes $H_1,\ldots, H_N$  in real affine space $\mathbb{R}^d$,
 we write $F_I = \bigcap_{i \in I} H_i$ for any subset $I $ of $ [N] := \{1,\ldots,N\}$. The collection of nonempty 
 $F_I$ forms a poset $L(\mathcal A)$ by reverse inclusion.
 This is called the {\em intersection poset}, and it is graded by the codimension of $F_I$. 
The {\em characteristic polynomial}~is
\begin{equation}\label{eq:char_poly}
\chi_{\mathcal A}(t)\,\,\, = \,\,\sum_{F_I \in L(\mathcal A)} \mu(F_I) \cdot t^{d-\textrm{codim}(F_I)} 
\,\,\,=: \,\,\,\sum_{i=0}^d (-1)^d\cdot b_i(\mathcal A)\cdot t^{d-i}
\end{equation}
where $\mu(F_I) = \mu(\RR^d,F_I)$ is the M\"obius function of $L(\mathcal{A})$.
 The coefficients $\{b_i(\mathcal A)\}_{i=0}^d$ are the {\em Betti numbers} of $\mathcal{A}$. They are
 nonnegative integers. The components of the complement of the arrangement in $\mathbb{R}^d$ are called the \emph{regions} of $\mathcal A$.
 Zaslavsky \cite{Zas75} showed that the regions are counted, up to sign,  by $\chi_{\mathcal A}(-1)$.
  The bounded regions of $\mathcal{A}$ are counted by $|\chi_{\mathcal A}(1) |$.

We are interested in the characteristic polynomials of the generic discriminantal arrangements $\mathcal{A}(k,m)$ and
$\mathcal{B}(k,m)$ defined above. The following is our main result in this section.

\begin{theorem} \label{thm:arepolynomials}
For fixed $k$, the Betti numbers of $\mathcal A(k,m)$ and $\mathcal B(k,m)$ are polynomials in
the parameter $m$.
Here the $i$-th Betti number is a polynomial of degree $i(k-1)$. The number of bounded regions
of either arrangement is given by a polynomial in $m$ of degree $(k-1)^2$.
\end{theorem}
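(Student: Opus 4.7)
The plan is to exploit the symmetric group action of $S_m$ on $\mathcal{A}(k,m)$ and on the central arrangement $\widetilde{\mathcal{B}}(k,m)$. Relabeling of $\{1,\ldots,m\}$ permutes the $\binom{m}{k-1}$ hyperplanes of each arrangement, inducing automorphisms of their intersection posets. Every codimension-$i$ flat $F$ thus acquires a combinatorial type $\tau$ encoding the incidence data of the $(k-1)$-subsets that index the hyperplanes containing $F$, together with the support $S(F) \subseteq \{1,\ldots,m\}$. By the work of Koizumi, Numata, and Takemura~\cite{Koizumi} for very generic discriminantal arrangements, only finitely many such types arise (up to a size parameter for pencil-type degeneracies), and their M\"obius values $\mu(F)$ depend polynomially on that parameter.

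For each type $\tau$ with support size $s(\tau)$, the number of type-$\tau$ flats of $\mathcal{A}(k,m)$ equals $\binom{m}{s(\tau)}$ times a constant depending only on $\tau$, and $|\mu(F)|$ is itself a polynomial in $m$. Summing over $\tau$ then exhibits $b_i(\mathcal{A}(k,m))$ as a polynomial in $m$. The maximum degree comes from the \emph{fully generic type}: $i$ pairwise disjoint $(k-1)$-subsets, whose hyperplanes meet in general position with M\"obius value $(-1)^i$ and orbit size proportional to $\binom{m}{i(k-1)}$. One then checks case by case that every other configuration contributes in strictly lower degree, either because $s(\tau) < i(k-1)$ or because its orbit count is of lower order in $m$ after multiplying by the M\"obius factor. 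Consequently $b_i(\mathcal{A}(k,m))$ is a polynomial in $m$ of exact degree $i(k-1)$. The number of bounded regions equals $|\chi_{\mathcal{A}(k,m)}(1)| = \left|\sum_{i=0}^{k-1}(-1)^{i} b_i(\mathcal{A}(k,m))\right|$, whose dominant summand is $b_{k-1}$ of degree $(k-1)^2$, giving the final conclusion for $\mathcal{A}(k,m)$.

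For $\mathcal{B}(k,m)$, the same $S_m$-orbit analysis applies to the central arrangement $\widetilde{\mathcal{B}}(k,m)$, and the cone--decone identity $\chi_{\widetilde{\mathcal{B}}(k,m)}(t) = (t-1)\,\chi_{\mathcal{B}(k,m)}(t)$ produces the recurrence $b_i(\widetilde{\mathcal{B}}(k,m)) = b_i(\mathcal{B}(k,m)) + b_{i-1}(\mathcal{B}(k,m))$; induction on $i$ transfers polynomiality and the degree $i(k-1)$ to $b_i(\mathcal{B}(k,m))$, and the bounded region count follows identically. The principal obstacle is the combinatorial classification in the first paragraph---stabilization of codimension-$i$ flat types with $m$ and polynomial control of their M\"obius values---which is precisely the content we would extract from~\cite{Koizumi}. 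A secondary subtlety is uniqueness of the fully generic type at the maximum support $i(k-1)$, needed so that the leading coefficient of $b_i$ cannot cancel; this amounts to checking that any alternative arrangement of $i$ hyperplanes on $i(k-1)$ points is forced by the Grassmann--Pl\"ucker relations to coincide with the pairwise-disjoint configuration.
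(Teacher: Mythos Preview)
Your strategy is essentially the paper's: invoke the type classification of Koizumi--Numata--Takemura to get polynomiality of $b_i(\mathcal{A}(k,m))$, then pin down the degree via the flats coming from $i$ pairwise disjoint $(k{-}1)$-subsets, and finally pass to $\mathcal{B}(k,m)$ through the cone/decone relation. Two places where the paper's execution is cleaner than your sketch are worth noting.

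First, for the upper bound $\deg_m b_i \le i(k{-}1)$ you propose to ``check case by case'' that every non-generic type contributes in strictly lower degree. The paper bypasses this entirely: any arrangement of $N$ hyperplanes in $\RR^{k-1}$ has $i$-th Betti number bounded by that of a \emph{generic} arrangement of $N$ hyperplanes, and for $N=\binom{m}{k-1}$ the generic Betti number is already a polynomial in $m$ of degree $i(k{-}1)$. This is a one-line monotonicity argument rather than a case analysis.

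Second, your worry about cancellation of leading coefficients (and the proposed fix via uniqueness of the fully generic type using Grassmann--Pl\"ucker) is unnecessary. In the intersection poset of a hyperplane arrangement, all M\"obius values $\mu(\RR^{k-1},F)$ at a fixed codimension $i$ share the sign $(-1)^i$. Hence the count of disjoint-subset flats is already a lower bound on $b_i$ with no possibility of cancellation, and no uniqueness statement is needed. With these two simplifications your outline becomes exactly the paper's proof.
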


In light of Remark \ref{rmk:EulerBkm}, we are primarily interested in the number of bounded regions of $\mathcal{B}(k,m)$.
Cachazo et al.~\cite[\S 3]{CUZ} wrote these as polynomials of
degree $4$ and $9$ for $k=3,4$. This led to the polynomiality conjecture that is proved here.
We list our polynomials for~$k \leq 7$:

\medskip

\noindent $k=3$:{\small \qquad
$\frac{1}{(2!)^3}(m-4)(m^{3}-6m^{2}+11m-14)$ \\
}\vspace*{-0.1cm}

\noindent $k=4$:
{\small
$ \frac{1}{(3!)^4}(m-5)(m^{8}-13m^{7}-5m^{6}+1019m^{5}-7934m^{4}+29198m^{3}-57510m^{2}+57276m-20736)$}
\\

\noindent $k=5$:
{\tiny
$\quad \frac{1}{(4!)^5}(m-6)(m^{15}-34m^{14}+536m^{13}-6016m^{12}+56342m^{11}-324124m^{10}-737436m^{9}+33755560m^{8}-
      324772079m^{7} \\ \phantom{dada} \qquad \qquad +1784683822m^{6}-6330080036m^{5}
      +14844484456m^{4}-22600207744m^{3}+21093515136m^{2}-
      10696725504m+2188394496)$
}

\noindent $k=6$:
{\tiny
$\quad \frac{1}{(5!)^6}(m-7)(m^{24}-68m^{23}+2199m^{22}-44982m^{21}+643996m^{20}-6596728m^{19}+44404954m^{18}-71800572m^{
      17}-3127304119m^{16} \\ \phantom{dada}  \qquad \qquad
      +54378585092m^{15}
      -490514132181m^{14}+1776590470858m^{13}+18487658083746m^{12}
      -378944155004728m^{11}+3596483286643204m^{10} \vspace{-0.12cm}  \\ \phantom{dada}  \qquad \qquad
       -23256569552060072m^{9}
      +111789542126956376m^{8}-
      412236512413133568m^{7}+1177950138000941824m^{6}-2598297935794415232m^{5} \vspace{-0.12cm}  \\ \phantom{dada} \qquad \qquad
            +4348400923758960000m^{4}      -5332505009742720000m^{3}+4499090747884800000m^{2}-2314865713121280000m+540696010752000000)$
      }
      
\noindent $k=7$:
{\tiny
$ \quad \frac{1}{(6!)^7}(m-8)(m^{35}-118m^{34}+6721m^{33}-246112m^{32}+6510988m^{31}-132696712m^{30}+2174185716m^{29}-
      29636017152m^{28}
\\ \phantom{dada} \quad      
      +347341371054m^{27}-3619686930036m^{26}
      $$+34412884983870m^{25}-297168540668160m^{
      24}+2163013989971700m^{23}-9541316565707160m^{22} \vspace{-0.12cm} \\ 
      -57505724652057900m^{21}+2082608669563706400m^{
      20} -30141885880045284135m^{19}+290868328355522261370m^{18}-1789045412885731209655m^{17}
    \vspace{-0.12cm}   \\ \phantom{dada} \quad +
      2410419545362804828960m^{16}+98045335860104486976824m^{15}
      -1463950567334046862107632m^{14}+
      13043235985765459517500784m^{13} \vspace{-0.12cm} \\ 
      \phantom{dada} \quad
      -86372694744783734157998048m^{12}+451792193002382546052610752
      m^{11} -1910783375678688560288108928n^{10} \vspace{-0.12cm} \\ \phantom{dada} \quad
      +6590586320126085204961058304n^{9} -
      18548788372955608600206309888m^{8}+42375051122462107996615842816m^{7} \vspace{-0.12cm} \\ \phantom{dada} \quad
     -   77750332979800481270501750784m^{6}+112672771830431700211895132160m^{5}-
      125767704870432247378231296000m^{4} \vspace{-0.1cm} \\ \phantom{dada} \quad +104081289628186508359680000000m^{3} -
      60012326911967527500840960000m^{2}+21475615998699753858662400000m- 2^25 3^15 5^7 23 593 6983) $}
      
\medskip

\noindent Each polynomial gives the values in one row of Table \ref{tab:parallel}.
See also the list at the end of~\cite{Koizumi}.

We now embark towards the proof of Theorem \ref{thm:arepolynomials}, beginning with the following lemma.

\begin{lemma}
\label{lem:characteristicpolyfromAtoB}
The characteristic polynomials of $\mathcal A(k,m)$ and $\mathcal B(k,m)$ are related as follows:
$$\chi_{\mathcal A(k,m)}(t)\cdot t \,-\, \chi_{\mathcal A(k,m)}(1) \,\,\,=\,\,\, \chi_{\mathcal B(k,m)}(t)\cdot(t-1).$$ 
\end{lemma}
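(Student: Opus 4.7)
My plan is to recognize both $\mathcal{A}(k,m)$ and $\mathcal{B}(k,m)$ as affine sections of the same central arrangement $\widetilde{\mathcal{B}}(k,m)\subset\mathbb{R}^k$ introduced above. The hyperplane $\{x_1=0\}$ is spanned by columns $2,\dots,k$ of the matrix $M_{k,m}$ and hence belongs to $\widetilde{\mathcal{B}}(k,m)$, so restricting to the affine hyperplane $\{x_1=1\}$ is the standard ``deconing'' operation at that hyperplane and recovers $\mathcal{B}(k,m)$. On the other hand, for very generic choice of the $x_{i,j}$, the arrangement $\mathcal{A}(k,m)$ is combinatorially identical to the restriction of $\widetilde{\mathcal{B}}(k,m)$ to a \emph{generic} affine hyperplane $H'$ not through the origin. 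I will write $\chi_{\widetilde{\mathcal{B}}(k,m)}(t)$ in two different ways, one for each slicing, and equate them.

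The first description gives the classical deconing identity
$$\chi_{\widetilde{\mathcal{B}}(k,m)}(t) \,=\, (t-1)\,\chi_{\mathcal{B}(k,m)}(t).$$
For the second, a generic affine slice of a central arrangement $\mathcal{C}\subset\mathbb{R}^k$ has intersection poset $L(\mathcal{C})\setminus\{\hat{1}\}$: the top element $\hat{1}=\{0\}$ drops out because $\{0\}\cap H'=\emptyset$, while every other linear flat $F$ of codimension $r$ in $\mathbb{R}^k$ cuts $H'$ in an affine flat of codimension $r$ in $H'\cong\mathbb{R}^{k-1}$, and removing a top element leaves $\mu(\hat{0},F)$ unchanged for $F\neq\hat{1}$. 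Separating the $\hat{1}$-contribution from the Whitney expansion $\chi_{\widetilde{\mathcal{B}}(k,m)}(t)=\sum_{F}\mu(\hat{0},F)\,t^{k-r(F)}$ therefore gives
$$\chi_{\widetilde{\mathcal{B}}(k,m)}(t) \,=\, t\,\chi_{\mathcal{A}(k,m)}(t) \,+\, \mu(\hat{0},\hat{1}).$$

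To finish, I use that a nonempty central arrangement satisfies $\chi_{\widetilde{\mathcal{B}}(k,m)}(1)=0$: evaluating the previous display at $t=1$ forces $\mu(\hat{0},\hat{1})=-\chi_{\mathcal{A}(k,m)}(1)$, whence
$$\chi_{\widetilde{\mathcal{B}}(k,m)}(t) \,=\, t\,\chi_{\mathcal{A}(k,m)}(t)\,-\,\chi_{\mathcal{A}(k,m)}(1).$$
Equating this with the deconing identity gives exactly the statement of the lemma. The only real subtlety is the poset identification in step two: one must invoke the very generic (Bayer--Brandt) hypothesis on the $x_{i,j}$ to guarantee that $\mathcal{A}(k,m)$ genuinely realizes the combinatorics of a generic affine slice of $\widetilde{\mathcal{B}}(k,m)$, with no second-order coincidences obstructing the identification $L(\mathcal{A}(k,m))\cong L(\widetilde{\mathcal{B}}(k,m))\setminus\{\hat{1}\}$.
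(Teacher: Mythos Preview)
Your proof is correct and follows essentially the same route as the paper: both express $\chi_{\widetilde{\mathcal{B}}(k,m)}(t)$ in two ways, once via the deconing identity $\chi_{\widetilde{\mathcal{B}}(k,m)}(t)=(t-1)\chi_{\mathcal{B}(k,m)}(t)$ and once via a generic affine slice giving $\chi_{\widetilde{\mathcal{B}}(k,m)}(t)=t\,\chi_{\mathcal{A}(k,m)}(t)-\chi_{\mathcal{A}(k,m)}(1)$, and then equate. Your write-up is in fact more explicit than the paper's, which simply quotes the slice formula as a fact; you also take care to use a \emph{generic} affine hyperplane $H'$ for the $\mathcal{A}(k,m)$ identification and to invoke the very-generic hypothesis, which clarifies a point the paper's proof leaves slightly implicit.
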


\begin{proof}
The central arrangement $\widetilde{\mathcal B}(k,m)$ is the cone over $\mathcal B(k,m)$.
This gives the relation
$$\chi_{\widetilde{\mathcal B}(k,m)}(t) \,\,=\,\, \chi_{\mathcal B(k,m)}(t)\cdot(t-1).$$ 
The restriction $\mathcal{C}^H$
of any central arrangement $\mathcal C$ to a hyperplane $H$ not containing the origin 
satisfies $\,\chi_{\mathcal C^H}(t)\cdot t \,-\, \chi_{\mathcal C^H}(1) = \chi_{\mathcal C}(t)$.
We apply this fact to $\,\mathcal{C} = \widetilde{\mathcal{B}}(k,m)$ and
$H = \{x_1 = 1\}$. This implies the assertion because the restriction of $\mathcal{C}$ to $H$ is precisely
$\,\mathcal{C}^H = \mathcal A(k,m)$.
\end{proof}

\begin{example}\label{example:A34} 
The discriminantal arrangement $\widetilde{\mathcal{B}}(3,4)$ given by four vectors in $\RR^3$ satisfies
$$ \chi_{\widetilde{\mathcal B}(3,4)}(t) \,\,=\,\, t^3-6t^2+11t-6.$$
The characteristic polynomials  in Example \ref{ex:A34greenblue} are derived from this.
We obtain $\chi_{\mathcal A(3,4)}(t) $ by deleting the constant term and dividing by $t$.
We obtain $\chi_{\mathcal B(3,4)}(t) $ by dividing by $t-1$.
\end{example}

\begin{corollary}
\label{cor:boundedBkn}
The number of bounded regions of $\,\mathcal B(k,m)\,$ is given, up to a sign, by
$$ \chi_{\mathcal B(k,m)}(1) \,\,=\,\, \frac{d}{dt}\chi_{\mathcal A(k,m)}(1) \,+\, \chi_{\mathcal A(k,m)}(1).$$
\end{corollary}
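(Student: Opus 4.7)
The plan is to derive the claimed identity directly from Lemma \ref{lem:characteristicpolyfromAtoB}, and then invoke Zaslavsky's theorem (stated just after \eqref{eq:char_poly}) to interpret $|\chi_{\mathcal B(k,m)}(1)|$ as the number of bounded regions.

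First I would rewrite the identity of Lemma \ref{lem:characteristicpolyfromAtoB} as
\[
\chi_{\mathcal A(k,m)}(t)\cdot t \,-\, \chi_{\mathcal A(k,m)}(1) \,\,=\,\, \chi_{\mathcal B(k,m)}(t)\cdot(t-1).
\]
Observing that the right-hand side vanishes at $t=1$, I cannot simply substitute; instead I differentiate both sides with respect to $t$. The left-hand side differentiates to $\chi_{\mathcal A(k,m)}'(t)\cdot t + \chi_{\mathcal A(k,m)}(t)$, and the right-hand side to $\chi_{\mathcal B(k,m)}'(t)\cdot(t-1) + \chi_{\mathcal B(k,m)}(t)$. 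Setting $t=1$, the term $\chi_{\mathcal B(k,m)}'(1)\cdot(t-1)$ kills itself and I obtain
\[
\chi_{\mathcal A(k,m)}'(1) + \chi_{\mathcal A(k,m)}(1) \,\,=\,\, \chi_{\mathcal B(k,m)}(1),
\]
which is exactly the claimed formula. (Equivalently, one may factor $(t-1)$ out of $\chi_{\mathcal A(k,m)}(t)\cdot t - \chi_{\mathcal A(k,m)}(1)$ and evaluate the quotient at $t=1$ via L'H\^opital's rule; both routes lead to the same answer.)

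To close the statement, I would invoke Zaslavsky's theorem, recalled immediately after \eqref{eq:char_poly}: for any real affine hyperplane arrangement $\mathcal A$, the number of bounded regions is $|\chi_{\mathcal A}(1)|$. Applied to $\mathcal B(k,m)$, this identifies the scalar $\chi_{\mathcal B(k,m)}(1)$ with the signed count of bounded regions, completing the proof.

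There is essentially no obstacle here: the argument is a one-line polynomial manipulation followed by a citation of Zaslavsky. The only subtle point is the vanishing of $(t-1)$ on the right at $t=1$, which forces the use of differentiation (or L'H\^opital) rather than direct substitution. This is precisely why the derivative of $\chi_{\mathcal A(k,m)}$ enters the final formula.
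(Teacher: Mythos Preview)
Your proof is correct and matches the paper's intended derivation: the corollary is stated without an explicit proof, as an immediate consequence of Lemma~\ref{lem:characteristicpolyfromAtoB}, and your differentiation-at-$t=1$ argument is exactly the standard way to extract $\chi_{\mathcal B(k,m)}(1)$ from that identity.
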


Koizumi et al.~\cite{Koizumi} provide  formulas for $\chi_{\mathcal A(k,m)}$ for $1 \leq k \leq 7$ and general $m$.
The coefficients of their characteristic polynomials, i.e.~the Betti numbers,
 are  polynomials in $m$. We show that this trend continues. The extension to $\mathcal B(k,m)$ 
 follows from Lemma \ref{lem:characteristicpolyfromAtoB}.

\begin{lemma}
The Betti numbers of both $\,\mathcal A(k,m)\,$ and $\,\mathcal B(k,m)\,$ are polynomials in $m$.
\end{lemma}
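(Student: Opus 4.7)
Plan. By Lemma \ref{lem:characteristicpolyfromAtoB}, the identity
\[
t\cdot\chi_{\mathcal A(k,m)}(t) - \chi_{\mathcal A(k,m)}(1) \,=\, (t-1)\cdot\chi_{\mathcal B(k,m)}(t)
\]
expresses each coefficient of $\chi_{\mathcal B(k,m)}(t)$ as an explicit $\mathbb Z$-linear combination of coefficients of $\chi_{\mathcal A(k,m)}(t)$, and vice versa. It therefore suffices to prove that, for each fixed $k$, every Betti number $b_i(\mathcal A(k,m))$ is a polynomial in $m$.

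Fix $k$ and a codimension $i$. Each codimension-$i$ flat $F$ of $\mathcal A(k,m)$ is determined by its closed family $\mathcal F_F = \{\,I\in\binom{[m]}{k-1} : F\subseteq H_I\,\}$. Under the very-generic assumption of Bayer--Brandt \cite{BB}, the combinatorial structure of $\mathcal F_F$ and of the localized interval $[\hat 0,F]\subseteq L(\mathcal A(k,m))$ depends only on a \emph{combinatorial type} $\tau$ of $F$. Following Koizumi, Numata, and Takemura \cite{Koizumi}, each type is described by a pattern on a bounded ``essential'' subset of indices from $[m]$, possibly together with a number of ``pendant'' indices that participate symmetrically.

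My plan has three steps: (a) show that for $m$ large, the set of types of codimension-$i$ flats of $\mathcal A(k,m)$ is finite and stable in $m$; (b) for each type $\tau$, count the codim-$i$ flats of type $\tau$ as a polynomial $N_\tau(m)$ in $m$, obtained by choosing the positions of the pattern in $[m]$; and (c) show that the M\"obius value $\mu_\tau(m) := \mu(\hat 0, F)$ for $F$ of type $\tau$ is a polynomial in $m$. Summing $\mu_\tau(m)\cdot N_\tau(m)$ over the finitely many types will then exhibit $b_i(\mathcal A(k,m))$ as a polynomial in $m$.

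The crux is step (c): although $\mu(\hat 0,F)$ depends only on the localized arrangement at $F$, that arrangement can contain on the order of $m$ hyperplanes coming from the pendant indices, so polynomiality is not automatic. I would proceed by induction on codimension, using the observation that localizations at flats above $F$ are themselves iterated discriminantal-type arrangements whose M\"obius values are polynomial by the inductive hypothesis. The prototypical case is the vertex $P_i \in \mathcal A(3,m)$: its localization is a pencil of $m-1$ concurrent lines, with $\mu(\hat 0,P_i)=m-2$, which is linear in $m$. Once polynomiality of $\mu_\tau(m)$ is verified for every type, combining (a), (b), and (c) yields the lemma.
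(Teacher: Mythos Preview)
Your plan is essentially the same as the paper's: reduce to $\mathcal A(k,m)$ via Lemma~\ref{lem:characteristicpolyfromAtoB}, group codimension-$i$ flats by combinatorial type following \cite{Koizumi}, and show that both the number of flats of each type and the M\"obius value at each type are polynomials in $m$. The paper makes this precise by using the partition-valued types $\gamma\vdash i$ of \cite{Koizumi} and invoking their Proposition~4.7 for the explicit count $\lambda_{d,m}(\gamma)$ and their Proposition~4.1 for a recursion expressing $\mu_{d,m}(\gamma)$ in terms of $\mu_{d',m'}(\{\emptyset\})$ for strictly smaller $d'$ and $m'$.

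Two points where your sketch is looser than the paper. First, your informal description of types via ``essential'' and ``pendant'' indices is not quite the parametrization used in \cite{Koizumi}; there the types are partitions of the codimension, and the polynomiality of the flat count comes from an explicit sum of multinomials (your step~(b) is correct in spirit but this is where the work sits). Second, and more substantively, your step~(c) is only a plan: you propose induction on codimension, whereas the paper's induction is on the ambient dimension $d$, because \cite{Koizumi}'s recursion reduces $\mu_{d,m}(\gamma)$ to the bounded-region counts of \emph{lower-dimensional} discriminantal arrangements, not merely to lower-codimension M\"obius values in the same $L(d,m)$. Your codimension induction could be made to work, but it would need exactly the structural identification of the localized intervals that \cite{Koizumi} supplies; without it the step ``localizations at flats above $F$ are themselves iterated discriminantal-type arrangements'' is an assertion rather than an argument.
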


\begin{proof}
We shall use \cite{Koizumi} to derive the result for $\mathcal A(k,m)$. We set $d=k-1$
and write $L(d,m)$ for the intersection poset  of $\mathcal A(k,m)$, with M\"obius function $\mu_{d,m}$.
Note that $\mu_{d,m}(\{\emptyset\})$ is the number of bounded regions in $\mathcal A(k,m)$.
In \cite{Koizumi}, the elements $F$ of $L(d,m)$ are grouped according to partitions $\gamma=(\gamma_1,\ldots,\gamma_\ell)$, called the \emph{type} of $F$. The number of elements of type $\gamma$ is denoted $\lambda_{d,m}(\gamma)$.
  A main result of \cite{Koizumi} is that the M\"obius function applied to $F$ depends only on the type of $F$. Hence, the characteristic polynomial of $L(d,m)$ equals
\begin{equation}
\label{eq:KoizumiCharSimple}
\chi_{d,m}(t) \,\,= \,\,
 \sum_{i=0}^d \sum_{\gamma \vdash i} \lambda_{d,m}(\gamma) \mu_{d,m}(\gamma) t^{d-i}.
\end{equation}

We will show that $\lambda_{d,m}(\gamma)$ and $\mu_{d,m}(\gamma)$ are both polynomials in $m$ for fixed $d$.
Then \eqref{eq:KoizumiCharSimple} implies that the coefficients of the characteristic polynomial are polynomials in $m$ as well. 
Proposition 4.7 of \cite{Koizumi} gives the following formula:
	\begin{equation}\label{eq:KoizumiLambda}
\lambda_{d,m}(\gamma)\, \,\,=\,\, \, \frac{1}{\prod_{k=1}^d m_k(\gamma)!}\, 
\sum_{\nu} \frac{(\nu(I_2)+\cdots+\nu(I_{2^\ell}))!}{\nu(I_2)!\cdots \nu(I_{2^\ell})!}\,{{m}\choose{\nu(I_2)+\cdots+\nu(I_{2^\ell})}}. 
	\end{equation}
Here, $m_k(\gamma)$ is a constant depending on $k$ and $\gamma$, and $2^{[\ell]}$ is the power set 
of $ [\ell]$.
  This set is ordered $\{I_1,\ldots,I_{2^{\ell}}\}$ with $I_1 = \emptyset$. The sum in \eqref{eq:KoizumiLambda} is over all maps $\nu:2^{[\ell]} \to \mathbb{N}$~satisfying
\begin{enumerate} 
\item $\sum_{i \in I} \nu(I)\, =\, d+1 -\gamma_i\,$ for all $\,i \in [\ell]$,
\item $\sum_{I \subset I'} \nu(I') \,<\, d+ 1 - \sum_{i \in I} \gamma_i\,$ for all $I$ with $|I|>1$, and 
\item $\sum_{I \in 2^{[\ell]}} \nu(I) \,=\, m$.
\end{enumerate} Note that as $m$ grows by one, the indexing set of these maps essentially remains the same; the only difference is that $\nu(\emptyset)$ is also incremented by one. Since $\nu(\emptyset)=\nu(I_1)$ does not appear in the binomial in \eqref{eq:KoizumiLambda}, the expression \eqref{eq:KoizumiLambda} is polynomial in $m$.

 Proposition 4.1 of \cite{Koizumi} gives a recursive definition for $\mu_{d,m}(\gamma)$ in terms of $\mu_{d',m'}(\{\emptyset\})$ for strictly smaller $d'$ and $m'$. Since these are the numbers of bounded regions of $\mathcal A(k',m')$ for strictly smaller $k'$ and $m'$, their polynomiality follows inductively, proving the result.
 \end{proof}
 
 \begin{proof}[Proof of Theorem \ref{thm:arepolynomials}]
 The Betti number $b_i$ of interest is the coefficient of $t^{d-i}$ in $\chi_{d,m}(t)$.
We know that this is a polynomial in $m$. We first argue that the degree of this polynomial is
 at most $d\cdot i$, and then we show that it has degree at least $d \cdot i$. 
For the upper bound, we note that 
the $i$-th Betti number of a generic arrangement with ${{m}\choose{d}}$ hyperplanes in $\RR^d$
is a polynomial in $m$ of degree $d \cdot i$.
 Any other arrangement with the same number of hyperplanes,
 including discriminantal ones, must have Betti numbers bounded by these generic Betti numbers. 

To derive the lower bound, we recall that, by definition of the Betti numbers in~\eqref{eq:char_poly},
\begin{equation}
b_i \,\,\,= \sum_{F \in L(d,m) \atop \text{codim}(F)=i} \!\! \mu_{d,m}(F).
\end{equation}
All terms in this sum have the same sign. Suppose $m>d\cdot i$ and consider all 
$T_{e_1},\ldots,T_{e_i} \subset [m]$ where each $T_{e_j}$ consists of $d$ points and $T_{e_j} \cap T_{e_k} = \emptyset$ for $1 \leq j < k \leq i$. These collections correspond to the codimension $i$ flats $\bigcap_{j=1}^i H_j$ where $H_j$ is the hyperplane spanned by the points indexed by $T_{e_j}$.
These are special flats in our arrangement.
 Their number is 
 $$\frac{1}{d!}\underbrace{{{m}\choose d}\cdot {{m-d}\choose{d}} \cdot \cdots  \cdot {{m-id+d}\choose{d}}}_{i \text{ factors}}.$$
 This product of $i$ binomial coefficients is a polynomial in $m$ of degree $d \cdot i$.
Therefore, the degree of $b_i $ as polynomial in $m$
 is at least $d\cdot i$. This completes the proof of Theorem \ref{thm:arepolynomials}.
\end{proof}

\begin{corollary}
The ML degree of  the generic fiber of $\pi_{k,m}$
 is a polynomial of degree $(k{-}1)^2$.
\end{corollary}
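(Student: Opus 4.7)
The plan is to obtain the corollary as an immediate consequence of Theorem \ref{thm:arepolynomials}, combined with the topological identification of the generic fiber furnished by Remark \ref{rmk:EulerBkm}. Essentially no new computation is needed; the content lies in translating between three equivalent quantities.

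First, I would invoke \eqref{eq:fiberB} together with Remark \ref{rmk:EulerBkm} to identify the generic fiber of $\pi_{k,m}$ with the complement in $\mathbb{P}^{k-1}$ of a hyperplane arrangement combinatorially equivalent to $\mathcal{B}(k,m)$. This complement is smooth and very affine of complex dimension $k-1$, so Huh's formula \eqref{eq:mldegreeeulchar} applies and yields
\[
\operatorname{MLdegree}(\text{generic fiber}) \,=\, (-1)^{k-1}\,\chi(\text{generic fiber}).
\]
Next, I would appeal to the Zaslavsky/Varchenko count recalled just before Theorem \ref{thm:arepolynomials}: the number of bounded regions of a real affine hyperplane arrangement $\mathcal{A}$ equals $|\chi_{\mathcal{A}}(1)|$, and by the ML-degree-versus-bounded-regions statement cited in the introduction, this bounded-region count agrees with the ML degree of the complexified complement. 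Applied to $\mathcal{B}(k,m)$, this shows that the ML degree of the generic fiber of $\pi_{k,m}$ equals the number of bounded regions of $\mathcal{B}(k,m)$.

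Finally, Theorem \ref{thm:arepolynomials} asserts that this number of bounded regions is a polynomial in $m$ of degree $(k-1)^2$, which gives the corollary. The only step requiring any care is the sign bookkeeping between $\chi$, $(-1)^{k-1}\chi$, and $|\chi_{\mathcal{B}(k,m)}(1)|$, but this is automatic once the dimension of the fiber is fixed at $k-1$. Thus the corollary is essentially a repackaging, in the language of likelihood geometry, of the combinatorial result already established in Theorem \ref{thm:arepolynomials}; there is no substantive obstacle beyond that theorem itself.
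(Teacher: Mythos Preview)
Your proposal is correct and matches the paper's approach: the corollary is stated without proof, as it follows immediately from Theorem~\ref{thm:arepolynomials} together with Remark~\ref{rmk:EulerBkm} (the generic fiber has ML degree equal to the bounded-region count of $\mathcal{B}(k,m)$). Your unpacking of the sign bookkeeping via \eqref{eq:mldegreeeulchar} and Varchenko's theorem is exactly the implicit reasoning.
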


We presented formulas for these ML degrees for $k \leq 7$ and arbitrary $m$.
These play a major role in the stratified fibration approach of Section \ref{section2}. 
However, in addition to the generic fiber, we also need to know the Euler
characteristic for the fiber over each stratum in the base
space $X(k,m)$. All of these fibers are discriminantal arrangements, 
arising from lower-dimensional matroid strata in the large Grassmannian on the right hand side of 
(\ref{eq:mapgrass}). In other words, we need to compute $\chi_{\cal A}(1)$ for 
many large hyperplane arrangements ${\cal A}$.

In practise, this task can now be accomplished easily, thanks to the software 
recently presented in \cite{BHK}. This implementation was essential to us
in getting this project started, and in
 validating the polynomial formulas for $\mathcal{B}(k,m)$ displayed above.
 We expect it to be useful for a wide range of applications, not just in mathematics, but
 also in physics and statistics.

\section{Matroid strata} \label{section4}

In our topological approach to computing the ML degree of
$X(k,m)$, we encountered special strata of
points and lines in $\PP^{k-1}$ with prescribed incidence conditions.
Such strata can be modeled as matroid strata. From now on, all matroids
are assumed to be simple, so the term ``matroid'' will mean ``simple matroid''.
We shall  comprehensively study all small~matroids. 

For a matroid $M$ of rank $k$ on $m$ elements, we consider the 
 points in ${\rm Gr}(k,m)$ whose nonzero Pl\"ucker coordinates
are precisely those indexed by the bases of~$M$.
Let $X(M)$ be the quotient of that constructible set modulo
the action of $(\CC^*)^m$. One represents
$X(M)$ by a $k \times m$ matrix with at least one
entry~$1$ per column and some unknown 
entries that satisfy equations and inequations of degree $\leq k-1$
arising from nonbases and bases.
This encoding shows that $X(M)$ is a very affine variety.
If $M$ is the uniform matroid then $X(M) = X(k,m)$.
The aim of this section is to compute the ML degree of $X(M)$
for every matroid of small~size.

\begin{example}[$k=3,m=9$] \label{ex:pappus}
The {\em Pappus matroid} $M$ (shown in Figure \ref{fig:pappus})
has the nonbases
$$ 123 \quad 148\quad 159\quad 247\quad 269\quad 357\quad 368\quad 456\quad 789.
$$
These are precisely the triples that index the vanishing $3 \times 3$ minors of the matrix
\begin{equation}
\label{eq:pappusmatrix}
\begin{bmatrix}
 \,\, 1 & 0 & 1 & 0 & 1 & y &   0     &    1       & y  \,\,\\
\,\, 0 & 1 & x & 0 & 1 & y &   1     &    0       & 1  \,\, \\
\,\, 0 & 0 & 0 & 1 & 1 & 1 & 1/(1{-}x) & x/(y(x{-}1)) & 1 \,\, 
\end{bmatrix}.
\end{equation}
Indeed, the stratum $X(M)$ for the Pappus matroid is the set of
pairs $(x,y) \in (\CC^*)^2$ such that all $\,\binom{9}{3} - 9 = 75\,$ other maximal minors are nonzero.
The log-likelihood function equals
$$ 
u_1 {\rm log}(x) + u_2 {\rm log}(y)  + u_3 {\rm log}(1-x) + u_4 {\rm log}(1-y) + u_5 {\rm log}(1-x-y) + u_6 {\rm log}(1-xy) + 
u_7 {\rm log}(xy-x-y).
$$
The two partial derivatives of this expression are rational functions in $x$ and $y$. By equating these to zero,
we obtain a system of equations that has precisely eight solutions in $(\CC^*)^2$, provided
$u_1,u_2,\ldots,u_7$ are general enough. Hence the
stratum of the Pappus matroid $M$ is a very affine variety
$X(M)$ of dimension $2$ whose ML degree is equal to $8$.
It is one of the $12$ matroids of rank $3$ on $9$ elements with these invariants,
marked $8_{12}$ in Theorem~\ref{thm:matroid_ml}.
\end{example}

We now present the result of our computations for 
matroids of rank $k$ on $m$ elements. 
For fixed $k,m$, we list the matroid strata by dimension,
and we list  all occurring ML degrees
together with their multiplicity of occurrences.
For instance, the string $ 2_5  [1 ,2_4]$ accounts for
five strata of dimension $2$: four have ML degree $2$
and one has ML degree $1$. If the variety $X(M)$ is reducible, then we list
the ML degree for each component, e.g.~in the format $(3,3)_2$.

Below we follow the convention in the  applied algebraic geometry literature (e.g.~\cite{OS})
to assign a star to a theorem obtained by a numerical computation
which was not fully certified.

\begin{theoremstar}\label{thm:matroid_ml}
The strata $X(M)$ are  smooth for all matroids $M$ with
$k = 2$ or ($k=3$ and $m \leq 9$) or  ($k=4$ and $m=8$). 
Their ML degrees are given in the following lists.

\smallskip
\noindent
For $k=3,m=5$ there are $4$  matroids, up to 
permuting labels, and all are realizable over~$\CC$:
\vspace{-0.1in} 
$$ \begin{matrix} 2_1  [2_1] &&  1_2  [1_2] & &   0_1  [1_1] \end{matrix}  . $$
  For $k=3,m=6$ there are $9$  matroids, up to 
permuting labels, and all are realizable over~$\CC$:
\vspace{-0.1in} 
$$ \begin{matrix}
4_1  [26_1] &&  3_1  [6_1] & &   2_4  [1_1 ,2_3] & & 1_2 [1_2] &&  0_1  [1_1]  
\end{matrix}  . $$
For $k=3,m=7$ there are $22$ orbits of $\CC$-realizable  matroids:
$$ \begin{matrix}
6_1   [1272_1] &
5_1   [192_1] &
4_3    [24_2, 38_1] & 
3_7   [2_1, 6_4 ,  10_1, 12_1]  &
2_{6}  [ 1_1, 2_3, 4_2 ] &
1_3  [1_2, 2_1 ] &
0_1 [1_1]  \end{matrix} 
$$
For $k=3,m=8$ there are $66$ orbits of $\CC$-realizable  matroids:
$$ \begin{matrix} \begin{matrix}
 8_1 [188112_1] &
7_1 [21240_1] &
6_3 [1560_1, 2136_1, 2976_1] &
5_7 [120_2, 264_2, 368_1, 520_1, 568_1] 
\end{matrix}  \\
 \begin{matrix}
4_{16} [4_1, 6_1, 24_5,  38_1,  55_1, 56_2, 72_1, 80_1, 88_1, 120_2 ]  & 
3_{16} [2_1, 6_{4}, 10_2, 16_3, 17_1, 18_2, 24_1, 25_1, 32_1 ] 
\end{matrix}
\\
 \begin{matrix} \qquad
2_{14} [2_{4},4_3 ,  5_1, 6_3, 7_1, 8_2] & \qquad
1_{6} [1_{1},2_4, 3_1 ] & \qquad
0_{2} [1_1,2_1 ] 
\end{matrix}
\end{matrix}
$$
For $k=3,m=9$ there are $368$
orbits of $\CC$-realizable matroids:
{\footnotesize
$$\begin{matrix}
10_1 [\underline{74570400_1}] \quad  9_1[\underline{6750000_1}] \quad 8_{3}[349920_{1},\underline{565706_{1}},\underline{730656_{1}}], \\
7_{8}[13920_{1},24624_{1},43512_{1},44496_{1},56448_{1},73024_{1},94668_{1},99808_{1}]
\\
6_{22}[720_{3},2040_{2},2856_{2},3696_{1},5160_{1},6648_{1},6700_{1},6708_{1},6796_{1}, 7752_{1},8646_{1},\\ 8684_{1},9152_{1},11392_{1},11972_{1},14880_{1}, 15132_{1},18768_{1}]\\
5_{45}[0_{2},12_{1},24_{1},120_{6},264_{2},360_{2},480_{1},500_{1},512_{2},672_{1},712_{2},720_{1},780_{1},922_{1},948_{1}, 956_{1},960_{1},1092_{1},\\ 1220_{1},1264_{1},1294_{1},1296_{3},1316_{1},1364_{1},1672_{1},1736_{1},1744_{1},
1756_{1},1806_{1},2274_{1},2292_{1},2628_{1},2648_{1}]\\
4_{77}[0_{1},4_{1},6_{1},24_{5},38_{1},56_{3},80_{4},82_{2},96_{2},116_{2},118_{2},119_{1},120_{1},124_{3},132_{1},144_{1},150_{1},156_{1},162_{2},166_{1},\\
168_{1},172_{2},174_{1},186_{1},192_{1},200_{1},222_{1},224_{1},228_{2},232_{3},236_{1},238_{1},
240_{1},242_{2},243_{1},244_{1},262_{1}, \\ 272_{1}, 280_{1},
292_{1},302_{1},306_{1},312_{1},318_{1},324_{1},330_{2},336_{1},342_{1},
376_{1},416_{1},424_{2},434_{1},456_{1},460_{1}]\\
3_{93}[0_{2},6_{4},10_{3},16_{6},17_{1},18_{2},24_{6},25_{1},26_{4},27_{1},30_{1},32_{1},34_{2},35_{2},
36_{5},37_{3},38_{3},39_{1},40_{5},42_{2},44_{2},46_{2},\\
47_{2},51_{1},52_{4},54_{2},55_{1},56_{4},58_{2},60_{1},
64_{1},66_{2},68_{2},70_{3},72_{2},76_{2},84_{1},86_{1},102_{1},104_{1},108_{1}] \\
2_{78}[0_{1},2_{3},4_{4},6_{8},7_{3},8_{12},9_{3},10_{10},11_{4},12_{7},
13_{3},14_{2},15_{3},16_{7},17_{1},18_{3},19_{2},30_{1},(18, 18)_{1}],\\
1_{34}[0_{3},2_{10},3_{8},4_{7},5_{2},6_{2},(3, 3)_{2}]
\qquad \qquad 0_{8}[1_{3},2_{5}]
\end{matrix} $$}
For $k=4,m=8$ there are $554$
orbits of $\CC$-realizable matroids:
{\footnotesize
$$\begin{matrix}
9_{1}[\underline{5211816}] \qquad
8_{1}[\underline{516673_{1}}] \qquad
7_{5}[21240_{2},46392_{1},52392_{1},63040_{1}]\\
6_{14}[1272_{2},1560_{2},2136_{1},2976_{2},5136_{1},6976_{2},7600_{1},9424_{1},10368_{2}]\\
5_{40}[26_{1},120_{4},192_{4},264_{6},368_{3},488_{1},520_{2},568_{2},692_{1},708_{1},770_{1},936_{1},950_{1},\\
1080_{3},1294_{1},1296_{2},1322_{1},1400_{1},1710_{1},1768_{1},1812_{2}]\\
4_{89}[0_{3},4_{1},6_{2},24_{14},26_{1},38_{9},55_{1},56_{8},72_{3},80_{3},88_{4},104_{1},115_{2},120_{6},122_{1},
134_{2},136_{1},154_{3},\\162_{2},180_{2},181_{1},216_{1},220_{1},224_{1},226_{1},232_{2},236_{2},272_{1},282_{3},288_{2},
308_{2},384_{2},410_{1}]\\
3_{153}[1_{1},2_{8},6_{30},10_{15},12_{5},16_{13},17_{5},18_{10},24_{9},25_{5},27_{2},28_{1},32_{7},34_{1},35_{3},
36_{5},38_{1},\\
39_{2},40_{1},45_{1},46_{2},48_{1},50_{6},51_{1},52_{3},53_{2},55_{1},56_{2},58_{2},64_{2},66_{2},67_{1},72_{2},104_{1}]\\
2_{153}[1_{8},2_{35},4_{29},5_{1},6_{18},7_{10},8_{16},9_{2},10_{6},11_{6},12_{8},13_{3},14_{3},15_{3},18_{2},19_{1},24_{2}]\\
1_{81}[0_{4},1_{20},2_{36},3_{9},4_{9},5_{2},(2, 2)_{1}] \qquad \qquad
0_{18}[1_{15},2_{3}]
\end{matrix}
$$}
\end{theoremstar}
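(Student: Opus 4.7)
The plan is to reduce the theorem to a large but finite catalog of numerical computations, one per matroid orbit. First, I would obtain, for each pair $(k,m)$ in the stated range, a complete list of representatives for the $S_m$-orbits of simple matroids of rank $k$ on $m$ elements, together with $\CC$-realizability certificates, using the matroid database \texttt{MMIB\_DB} cited in Section~\ref{section1}. For the ranges in the statement, realizability is decidable by elementary means, so filtering out the non-realizable orbits is straightforward; by Mn\"ev's Universality Theorem this step would become hopeless for much larger $(k,m)$, which explains the scope of the theorem.

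For each realizable representative $M$, I build an explicit parametrization of $X(M)$ as in Example~\ref{ex:pappus}. Starting from a generic $k\times m$ matrix, I use the $\mathrm{GL}(k,\CC)\times(\CC^*)^m$ action to place a $k\times k$ identity (or antidiagonal, as in \eqref{eq:generalmatrix}) block and to normalize a column of ones, leaving unknowns $x_1,\ldots,x_N$ with $N=\dim X(M)$. Then $X(M)$ is cut out inside the torus $(\CC^*)^N$ by the vanishing of the maximal minors indexed by nonbases of $M$ and the non-vanishing of those indexed by bases. This yields a canonical presentation of the log-likelihood function $\mathcal{L}_M=\sum_{B\text{ basis}} u_B\log(p_B(x))$, from which the scattering equations $\partial\mathcal{L}_M/\partial x_i=0$ are rational functions in $x$ with parameters $u_B$.

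For each matroid I then solve this system for generic $u_B$ using the numerical homotopy continuation pipeline of \cite{HCjl}, tracking all paths from a suitable start system (e.g.\ a polyhedral or monodromy start system); the number of solutions with all $p_B(x)\neq 0$ is the ML degree of the component of $X(M)$ hit by the solver. When $X(M)$ is reducible, which happens in the entries of the form $(3,3)_2$, $(18,18)_1$, and $(2,2)_1$, I first compute a primary decomposition with \texttt{ZariskiFrames} of \cite{ZariskiFrames} and run the homotopy on each irreducible component. Certification of the counts proceeds via the interval-arithmetic tools of \cite{BRT}; the underlined entries are those whose counts I would certify rigorously in this way, while the remaining counts are obtained by monodromy/trace tests, which is why the statement carries a star. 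Smoothness of $X(M)$ in the stated range is verified by a Jacobian-criterion check at the computed critical points: the Jacobian of the nonbasis minors must have the expected corank on $X(M)$, and this can be read off from the local structure at each numerically computed solution and a few additional generic samples.

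The main obstacles are threefold. First, the combinatorial explosion: already $(k,m)=(3,9)$ and $(4,8)$ yield several hundred orbits and individual ML degrees above $10^7$, so millions of homotopy paths must be tracked. Second, certifying the large counts: standard interval certification scales poorly with the number of solutions, so only the largest instances (such as $\mathrm{MLdegree}(X(3,9))$ and $\mathrm{MLdegree}(X(4,8))$) require the cross-check with Thomas Lam's finite-field computation in Appendix~\ref{appendixA} to be trusted. Third, bookkeeping across components and across the $S_m$-action, so that the multiplicities $2_5[1,2_4]$ and the like genuinely record stratum counts modulo relabeling; this is handled by canonicalizing each matroid via its \texttt{MMIB\_DB} identifier before invoking the numerical solver.
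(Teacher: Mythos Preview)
Your approach is essentially the same as the paper's: enumerate matroid orbits from the \texttt{MMIB\_DB} database, build an explicit $k\times m$ matrix parametrization of each $X(M)$ via \texttt{alcove}/\texttt{ZariskiFrames}, solve the likelihood equations with \texttt{HomotopyContinuation.jl}, and certify with the interval-arithmetic method of \cite{BRT}.

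However, you have the underlining convention backwards. In the paper, the \emph{non}-underlined entries are those that were successfully certified via \cite{BRT} (giving a rigorous lower bound matching the reported count), while the \emph{underlined} entries are precisely those for which certification was \emph{not} completed. Two of the underlined entries are the uniform matroids $U_{3,9}$ and $U_{4,8}$, whose ML degrees are obtained by the separate arguments of Sections~\ref{section5} and~\ref{section6} rather than by direct certified homotopy. Also, Lam's finite-field computation in Appendix~\ref{appendixA} covers only $k=3$; the paper explicitly states that it did \emph{not} yield the value for $X(4,8)$, so it cannot serve as the cross-check you propose for that case.
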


\begin{proof}
This was obtained by exhaustive computations.
The matroids were taken from the database~\cite{MMIB_DB} that is
described in \cite{MMIB}.
The
 \texttt{GAP} packages \texttt{alcove}~\cite{alcove} and \texttt{ZariskiFrames}~\cite{ZariskiFrames}
 were used to obtain a representing $k \times m$ matrix, such as (\ref{eq:pappusmatrix}),
  along with further equations for nonbases and inequations for bases, whenever needed.
  The details of the underlying algorithm are described in~\cite{BK}.
From this matrix, together with the defining equations, we compute the ML degree as the number of critical points of the
log-likelihood function. This is illustrated in  Example \ref{ex:pappus}. These computations were performed 
using the \texttt{Julia} package {\tt HomotopyContinuation.jl}~\cite{HCjl}.
 See also \cite{ST}.
The ML degrees of the uniform matroids $U_{3,7}$, $U_{3,8}$, $U_{3,9}$, $U_{4,8}$ are derived  and discussed in Sections~\ref{section5} and~\ref{section6}.
\end{proof}

\begin{remark}
As the method of homotopy continuation is numerical, it is inherently subject to 
rounding errors. Hence, the numbers in Theorem \ref{thm:matroid_ml} come with this disclaimer as well. For those numbers which are not underlined, we successfully performed the certification method described in \cite{BRT}, using the implementation conveniently available in~\cite{HCjl}. Such a certified result delivers a mathematical \emph{proof} that our number bounds the true ML degree from below. Of the six underlined entries, two are the uniform matroids $U_{3,9}$ and $U_{4,8}$ (discussed in Sections \ref{section5} and \ref{section6}) and another is a matroid whose ML degree is computed in Example \ref{ex:NonUniformMLDegree}. The certificates proving the correctness of our results can be found at \url{https://zenodo.org/record/7454826#.Y6DCdezMKdY}
\end{remark}

We now briefly discuss some special matroids that appear in our lists,
shown in Figure~\ref{fig:matroids}.

\begin{example}
\label{ex:specialmatroids}
The {\em Pappus matroid} was seen in Example \ref{ex:pappus}.
The {\em non-Fano matroid}  has $k=3$ and $m=7$. It is projectively unique, so
its stratum has dimension $0$ and ML degree $1$.
The {\em affine geometry  $AG(2,3)$} has $k=3$ and $m=9$.
	 It is also known as the dual Hessian configuration.
	Its stratum is $0$-dimensional of degree $2$. Hence, its ML degree is $2$.
The table for $k=3, m=9$ lists nine
$\mathbb{C}$-realizable  matroids with ML degree $0$.
	The one with the fewest nonbases has $6$ nonbases; its stratum is $4$-dimensional and has degree $4$.
	A geometric representation of this matroid has the form of $3\times 3$ grid and is depicted in Figure~\ref{fig:grid}.
\end{example}

\begin{figure}[h]
		\begin{subfigure}{0.24\linewidth}
		\centering
		\includegraphics[scale=.65]{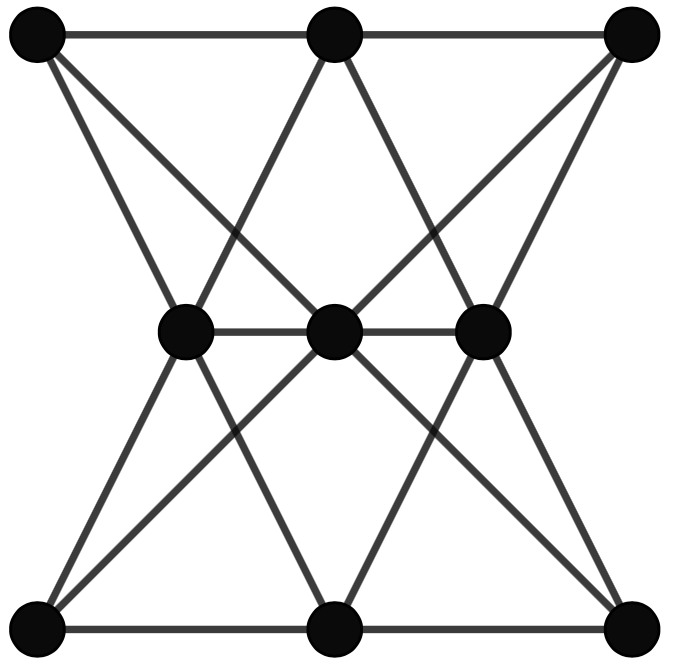}
		\caption{Pappus matroid}
		\label{fig:pappus}
	\end{subfigure}
\begin{subfigure}{0.24\linewidth}
		\centering
		\includegraphics[scale=.7]{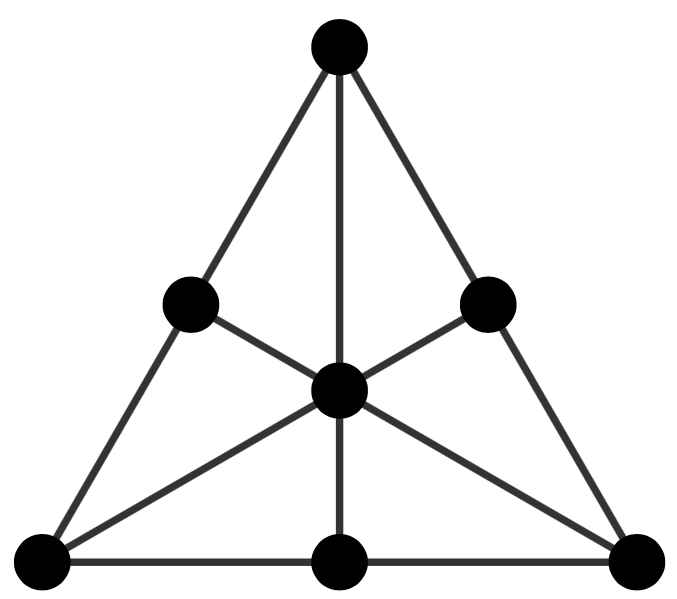}
		\caption{Non Fano matroid}
	\end{subfigure}
		\begin{subfigure}{0.24\linewidth}
	\centering
	\includegraphics[scale=.2]{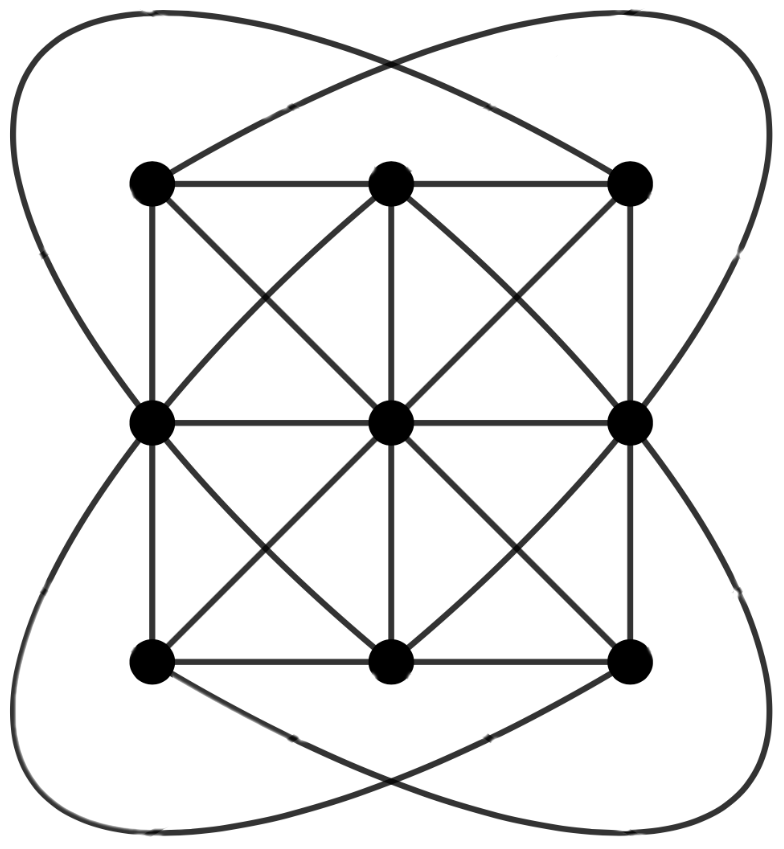}
	\caption{$AG(2,3)$}
	\end{subfigure}
	\begin{subfigure}{0.26\linewidth}
	\centering
	\includegraphics[scale=.65]{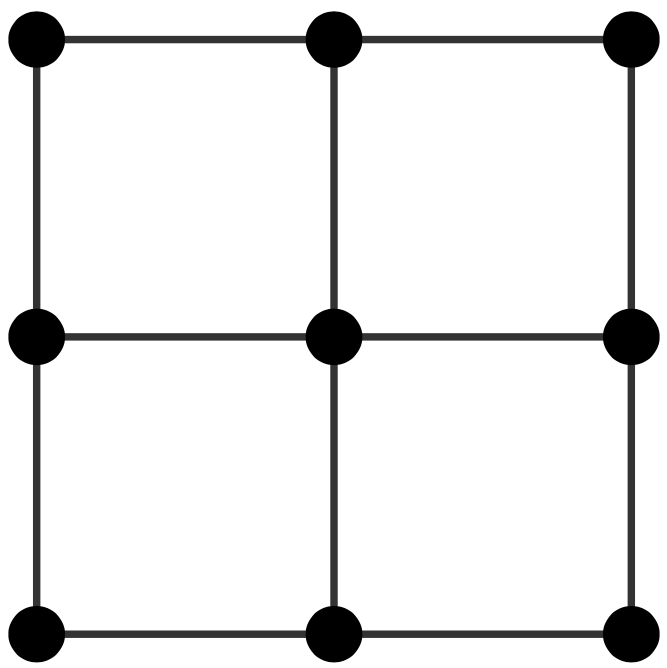}
	\caption{ML degree $0$ matroid}
	\label{fig:grid}
	\end{subfigure}
	\caption{Geometric representations of the matroids discussed in Examples
	\ref{ex:pappus} and \ref{ex:specialmatroids}.}
	\label{fig:matroids}
\end{figure}

\section{Fibrations of points and lines in the plane} \label{section5}

We now study the moduli space $X(3,m)$ of
$m$ labeled points in $\PP^2$ in linearly general position.
This is  very affine of dimension $2m-8$. Here is what we know about its
Euler characteristic.

\begin{theorem} \label{thm:numbersk=3} The ML degree of $X(3,m)$ is given by the following table for $m \leq 9$:
	$$
	\begin{matrix}
	m                &\vrule&  4 \,\, & 5 \,\,& 6 \,\,& 7 & 8 & 9 \\ \hline
	\chi(X(3,m)) &\vrule & 1 \,\, & 2 \,\,& 26 \,\,& 1\,272 & 188\,112 & 74\,570\,400 \\
	\end{matrix}	
	$$
\end{theorem}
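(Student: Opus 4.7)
\emph{Proof proposal.} I would proceed by induction on $m$, using the stratified fibration $\pi_{3,m}: X(3,m+1) \to X(3,m)$ together with the Euler characteristic formula of Lemma \ref{lemma:stratifiedfibration}. The base cases are $X(3,4)$, which is a single point (so $\chi(X(3,4))=1$), and $X(3,5)$, handled via $\pi_{3,4}:X(3,5)\to X(3,4)$. Since the base is a point, there is only one stratum, and the fiber is the complement of the arrangement $\mathcal{B}(3,4)$, which has $2$ bounded regions by Table \ref{tab:parallel}. Hence $\chi(X(3,5))=2$, matching the first two columns.

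For the inductive step $m \mapsto m+1$, I would apply Lemma \ref{lemma:stratifiedfibration} to the discriminantal stratification $\mathcal{S}_{3,m}$ of $X(3,m)$. Each stratum is determined by the combinatorial type of the arrangement $\mathcal{B}(P)$ of the $\binom{m}{2}$ lines spanned by pairs among the $m$ points in $\PP^2$; equivalently, a stratum corresponds to a matroid on these $\binom{m}{2}$ lines, constrained by the hypothesis that the $m$ given points are in linearly general position. The concrete tasks are: (a) enumerate the strata $S\in \mathcal{S}_{3,m}$; (b) compute $\chi(S)$, drawing on Theorem \ref{thm:matroid_ml}, which lists ML degrees (hence signed Euler characteristics) for all rank-$3$ matroid strata on at most $9$ elements; (c) compute $\chi(F_S)$ for each stratum, where $F_S$ is the complement of the (possibly non-generic) discriminantal arrangement prescribed by $S$, using the characteristic polynomial machinery of Section \ref{section3} and the software of \cite{BHK}; and (d) determine the M\"obius function of the poset $\mathcal{S}_{3,m}$ ordered by inclusion.

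The Euler characteristic of the generic fiber, attached to the open dense stratum $Y = X(3,m)$ itself, equals the number of bounded regions of $\mathcal{B}(3,m)$, namely $13, 42, 101, 205$ for $m=5,6,7,8$, obtained from Remark \ref{rmk:EulerBkm} and the explicit polynomial following Theorem \ref{thm:arepolynomials}. The second form of Lemma \ref{lemma:stratifiedfibration} then expresses
$$\chi(X(3,m+1)) \,=\, \chi(X(3,m))\cdot \chi(F_Y) \,+\, \sum_{S} \chi(S)\cdot\!\!\!\sum_{S'\supseteq S}\!\! \mu(S,S')\bigl(\chi(F_{S'})-\chi(F_Y)\bigr),$$
so only strata whose fiber differs combinatorially from the generic one contribute corrections.

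The main obstacle is the combinatorial bookkeeping of steps (a) and (d): one must identify all non-generic configurations of the $\binom{m}{2}$ lines in $\PP^2$ (unexpected triple points, concurrences of auxiliary intersection points, and so on), organize them into a stratification poset, and evaluate its M\"obius function. The size of this problem grows rapidly: for $m=8$ the base $X(3,7)$ already requires sifting through many matroid strata from Theorem \ref{thm:matroid_ml}, and for $m=9$ one must treat the full list of rank-$3$ matroids on nine elements. Thomas Lam's finite-field computation in Appendix \ref{appendixA} provides an independent verification of the value $74\,570\,400$, giving confidence in the topological bookkeeping.
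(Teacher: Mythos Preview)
Your overall strategy---induction via the stratified fibration $\pi_{3,m}$ and Lemma~\ref{lemma:stratifiedfibration}---matches the paper's, but you are missing the key result that turns the computation from feasible-in-principle into feasible. The paper proves (Theorem~\ref{thm:rhostrata}) that the correction factor $\rho(S)$ defined in~\eqref{eq:rhoS} vanishes for \emph{every} stratum $S$ except those whose discriminantal arrangement has exactly one extra concurrence point (a single point, beyond the original $m$, where exactly $h\ge 3$ of the $\binom{m}{2}$ lines meet), and in that case $\rho(S)=(-1)^{h-1}$. This collapses the sum over $\mathcal{S}_{3,m}$ to one term for each $h$ between $3$ and $\lfloor m/2\rfloor$; those special strata are counted by an elementary product of binomial coefficients and are isomorphic to matroid strata in $X(3,m+1)$, whose ML degrees can then be read off from Theorem~\ref{thm:matroid_ml}. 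The resulting recursion has a single correction term for $m=6,7$ and two for $m=8$.

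Your plan to carry out steps (a)--(d) over the full discriminantal poset also contains a misidentification: the strata $S\in\mathcal{S}_{3,m}$ are \emph{not} the matroid strata tabulated in Theorem~\ref{thm:matroid_ml}. The discriminantal stratification is much finer---it is pulled back via~\eqref{eq:mapgrass} from matroid strata on the $\binom{m}{2}$ lines, not on the $m$ points---so Theorem~\ref{thm:matroid_ml} does not directly supply $\chi(S)$ for a general $S$. Without Theorem~\ref{thm:rhostrata} you have no mechanism to reduce the hundreds of strata to the handful that actually contribute, and your proposal effectively concedes this by deferring to Appendix~\ref{appendixA} for the final value.
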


These numbers are easy to prove for $m \leq 6$. A computational proof for $m=6$
appeared in \cite[Appendix C]{CEGM}.
The numbers for $m=7,8$ were derived with the soft
limit argument in  \cite{CUZ}. This was a proof in the
sense of physics but perhaps not in the sense of mathematics.
The verification by numerical computation 
was presented in \cite[Proposition 5]{ST}. Thomas Lam had derived and proved
all numbers, including the $m=9$ case, using finite field methods.
This work was mentioned in the introduction of \cite{CUZ}, but 
Lam's finite fields proof had been unpublished so far.
It now appears along with this article, namely in Appendix \ref{appendixA} below.

The aim of this section is to solve this problem
   for general $m$ using the
stratified fibration approach in Section~\ref{section2}. 
Our techniques will be of independent interest.
In particular, they can be adapted to give a geometric proof for each of
the ML degrees of matroids in Theorem \ref{thm:matroid_ml}.
As a warm-up, here are geometric proofs for the first three numbers in
Theorem~\ref{thm:numbersk=3}.

\begin{example}[$m=4,5,6$] The space $X(3,4)$ is just a point, hence $\chi(X(3,4)) = 1$. 
	The very affine surface $X(3,5)$ is the complement of the 
	arrangement $\mathcal{B}(3,4)$ in Example \ref{ex:A34greenblue}.
	We have $\chi(X(3,5)) = 2$, by Varchenko's Theorem,
	as there are two bounded regions on the right in
	Figure \ref{fig:twoarrangements}. For $m=6$, we consider the deletion map
	$\pi_{3,6}: X(3,6) \rightarrow X(3,5)$. The discriminantal
	arrangement for any five points in $\PP^2$, with no three on a line,
	is isomorphic to $\mathcal{B}(3,5)$.
	All fibers of $\pi_{3,6}$ are homotopy equivalent to the
	complement of $\mathcal{B}(3,5)$.
	Hence $\pi_{3,6}$ is a fibration, where 
	each fiber has Euler characteristic $13$, by Table \ref{tab:parallel}.
	The base $X(3,5)$ has Euler characteristic $2$. Hence their product $26$
	is the Euler characteristic of $X(3,6)$.
\end{example}

We now consider $m \geq 6$.
Following Section \ref{section2}, we study the stratification $\mathcal{S}_{3,m}$ of $X(3,m)$.
The codimension one strata are all combinatorially equivalent: they are
the loci of configurations $[P]=[P_1,\dots,P_m]$ where three lines  $\overline{P_i P_j}$,  $\overline{P_k P_l}$, 
$\overline{P_r P_s}$ meet in a new point in $\PP^2$. We write  $S_{(ij)(kl)(rs)}$ for this stratum in $X(3,m)$.
All other strata are intersections of those, hence they can be denoted by a collection of triples $(ij)(kl)(rs)$.
For $m=6$, there are $15$ distinct codimension one strata $S_{(ij)(kl)(rs)}$,
one for each tripartition of $\{1,\dots,6\}$. All other strata in $\mathcal{S}_{3,6}$
are obtained by intersecting these $15$ divisors.
We found that $\mathcal{S}_{3,6}$ has
two combinatorially distinct codimension two strata, 
two codimension three strata and two codimension four strata. 
In Table \ref{table:strata} we list all strata explicitly, up to combinatorial equivalence.
Figure \ref{fig:configurations} shows point configurations $[P_1,\dots,P_6]$ 
for three among the seven strata in our list.

\begin{table}[h]
	\centering
	\begin{small}
		\begin{tabular}{c|c|c}
			Type &Codim & \multicolumn{1}{l}{Representatives for the divisors $(i,j)(k,l)(r,s)$ that intersect in the stratum}\\\hline
			I &1  & \multicolumn{1}{l}{(12)(34)(56)}\\
			II&2  & \multicolumn{1}{l}{(12)(34)(56),(12)(35)(46)}             \\
			III&2  & \multicolumn{1}{l}{(12)(34)(56),(15)(23)(46),(14)(26)(35)} \\
			IV&3  & \multicolumn{1}{l}{(12)(34)(56),(15)(23)(46),(14)(26)(35),(15)(26)(34)} \\
			V&3  & \multicolumn{1}{l}{(12)(34)(56),(12)(35)(46),(13)(26)(45),(14)(25)(36),(15)(24)(36),(16)(23)(45)}  \\
			VI&4  & \multicolumn{1}{l}{(12)(34)(56),(12)(35)(46),(14)(23)(56),(14)(26)(35),(15)(23)(46),(15)(26)(34)}\\
			VII&4  &   \multicolumn{1}{l}{(12)(34)(56),(12)(35)(46),(13)(24)(56),(13)(26)(45),(14)(25)(36),}\\ 
			&   & \multicolumn{1}{l}{(14)(26)(35),(15)(23)(46),(15)(24)(36),(16)(23)(45),(16)(25)(36)}                 
		\end{tabular}
	\end{small}
	\caption{All  types of strata in the $4$-dimensional very affine variety $X(3,6)$.
		\label{table:strata}}
\end{table}

\begin{figure}[h] $$ \hspace{-1.6cm}
	\includegraphics[scale=.35]{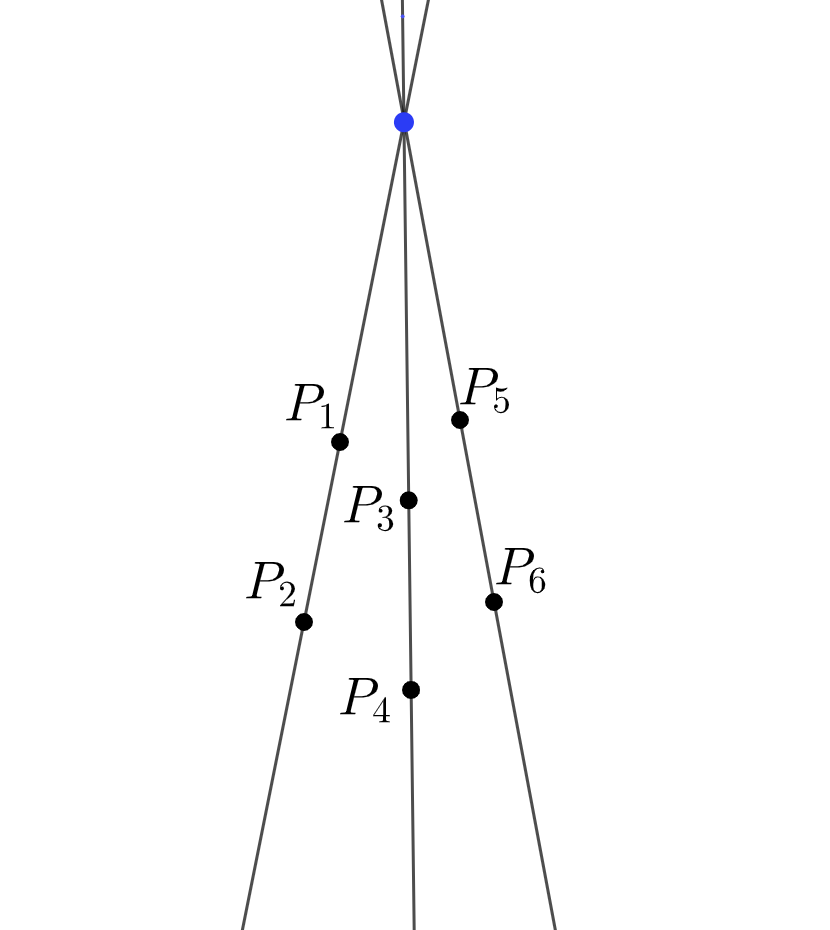} \!\!
	\includegraphics[scale=.33]{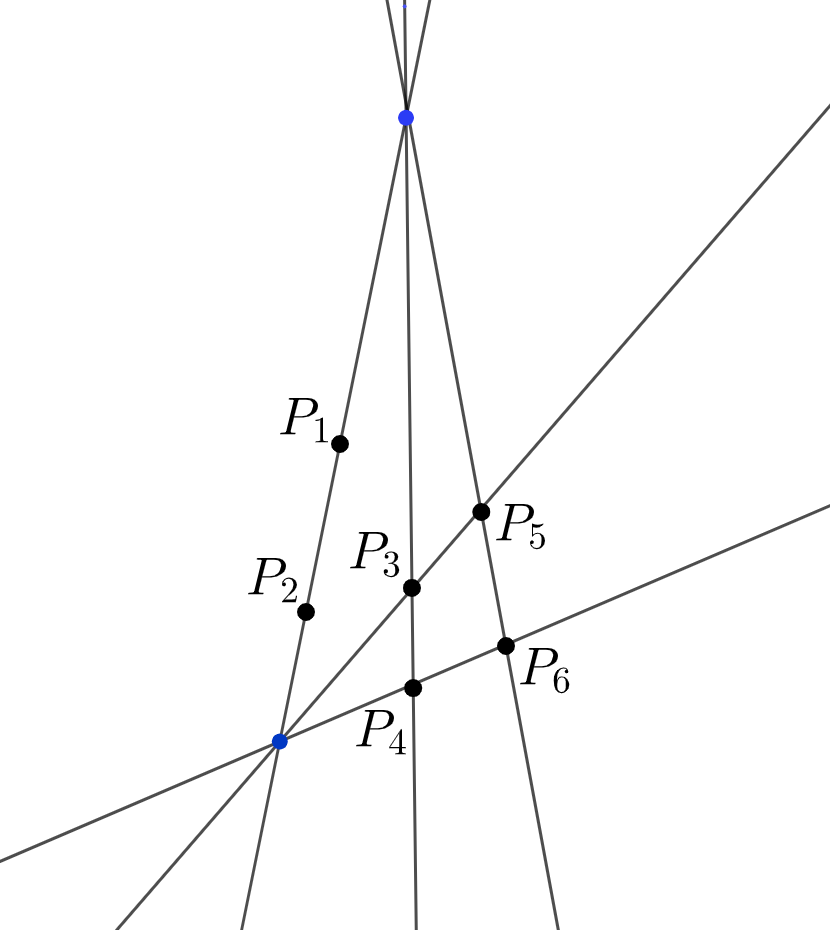} \qquad
	\includegraphics[scale=.286]{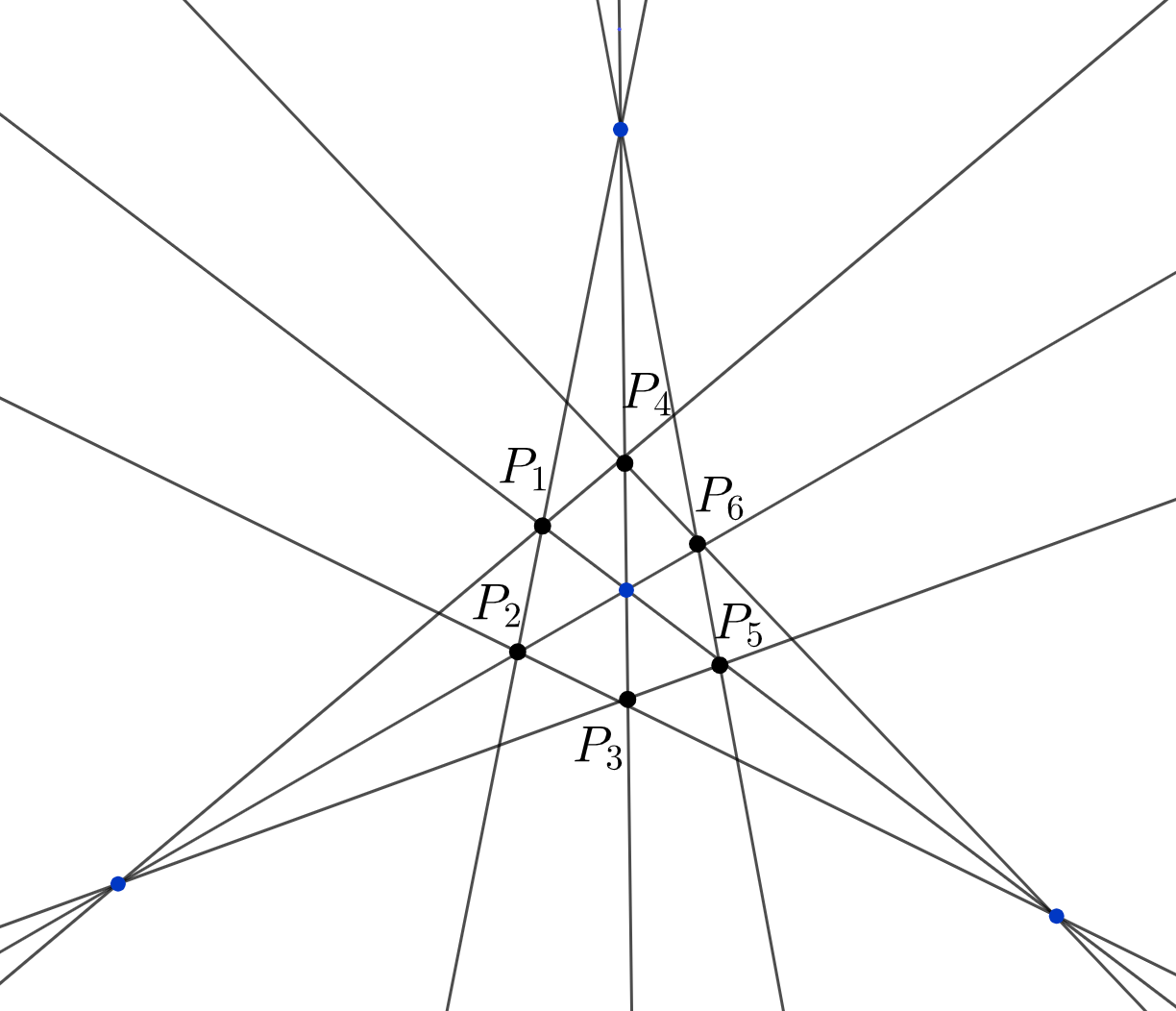} 
	\vspace{-0.33cm} $$
	\caption{Geometric representation of the strata of Type I, II, and IV in Table \ref{table:strata}
		\label{fig:configurations}}
\end{figure}

For our proof of Theorem~\ref{thm:numbersk=3}, we shall use Lemma \ref{lemma:stratifiedfibration}, 
here rewritten in the specific form
\begin{equation}\label{eq:chirho}
\chi(X(3,m+1)) \,\,\,\,= \,\,\, 
\chi(X(3,m))\cdot \chi(F_{X(3,m)})-\sum_{S\in \mathcal{S}} \chi(S) \,\cdot \rho(S),
\end{equation}
where we define
\begin{equation}\label{eq:rhoS}
\rho(S)  \quad := \sum_{S'\in \mathcal{S},S'\supseteq S} \!\! \mu(S,S')\cdot(\chi(F_{X(3,m)})-\chi(F_{S'})).
\end{equation}
For a fixed stratum $S$, the sum in \eqref{eq:rhoS} is over all $S'$ that contain $S$.
These form a subposet $\mathcal{P}_S$ (actually, a filter) of the poset $\mathcal{S}_{3,m}$.
It turns out that for many strata $S$, the factor $\rho(S)$ is zero.
This drastically simplifies the computation.
Let us illustrate this with an example.

\begin{example}
	Consider the map $\pi_{3,6}:X(3,7) \to X(3,6)$. Figure \ref{fig:posets} shows the subposets 
	for the two codimension three strata, of type IV and~V, together with the M\"obius function values at each node.
	The poset is ranked by codimension: zero for the top stratum, and three for the bottom stratum.
	Our aim is to compute $\rho(S)$. 	
	Let us look at the left poset $\mathcal{P}_S$. The codimension zero stratum $S'$ has $\chi(F_{S'})=\chi(F_{X(3,6)})$ by definition.
	The codimension one strata have $\chi(F_{X(3,m)})-\chi(F_{S'}) = 1$, by Example \ref{example:X36}. Looking at the strata of codimension two, we find that those of type III have $\chi(F_{X(3,6)}) - \chi(F_{S'}) = 3$, whereas those of type II have $\chi(F_{X(3,6)}) -\chi(F_{S'}) = 2$. Finally, the stratum of type IV has $\chi(F_{X(3,6)})-\chi(F_{S'}) = 4$. With this, one computes $\rho(S) = -2\cdot 0+3\cdot (1\cdot 1)+2\cdot 1+(-1)\cdot 3+3\cdot (-1\cdot 2)+1\cdot 4 = 0$.
\end{example}

\begin{figure}[h]
	\begin{subfigure}{0.45\linewidth}
		\centering \hspace*{-1.5cm}
		\begin{tikzpicture}[scale=1.2, transform shape]
		
		\node [label={[label distance=-0.2cm]90:\scriptsize{-2}}] (1) at (0, 5) {$\sbullet[0.8]$};
		\node [label={[label distance=-0.1cm,xshift = 0.15cm]180:\scriptsize{1}},label={[label distance=-0.2cm]0:\scriptsize{\color{blue}I}}] (2c) at (0.75,4) {$\sbullet[0.8]$};
		\node [label={[label distance=-0.1cm,xshift = 0.1cm]180:\scriptsize{2}},label={[label distance=-0.2cm]0:\scriptsize{\color{blue}I}}] (2d) at (2.25,4) {$\sbullet[0.8]$};
		\node [label={[label distance=-0.2cm,xshift = 0.05cm]180:\scriptsize{1}},label={[label distance=-0.2cm]0:\scriptsize{\color{blue}I}}] (2b) at (-0.75,4) {$\sbullet[0.8]$};
		\node [label={[label distance=-0.2cm,xshift = 0.05cm]180:\scriptsize{1}},label={[label distance=-0.1cm,xshift = -0.12cm]0:\scriptsize{\color{blue}I}}] (2a) at (-2.25,4) {$\sbullet[0.8]$};
		\node [label={[label distance=-0.2cm,xshift = 0.05cm]180:\scriptsize{-1}},label={[label distance=-0.2cm]0:\scriptsize{\color{blue}II}}] (3c) at (0.75,3) {$\sbullet[0.8]$};
		\node [label={[label distance=-0.1cm,xshift = 0.15cm]180:\scriptsize{-1}},label={[label distance=-0.2cm]0:\scriptsize{\color{blue}II}}] (3d) at (2.25,3) {$\sbullet[0.8]$};
		\node [label={[label distance=-0.2cm,xshift = 0.05cm]180:\scriptsize{-1}},label={[label distance=-0.0cm,xshift = -0.25cm]0:\scriptsize{\color{blue}II}}] (3b) at (-0.75,3) {$\sbullet[0.8]$};
		\node [label={[label distance=-0.2cm,xshift = 0.05cm]180:\scriptsize{-1}},label={[label distance=-0.0cm,xshift = -0.25cm]0:\scriptsize{\color{blue}III}}] (3a) at (-2.25,3) {$\sbullet[0.8]$};
		\node [label={[label distance=-0.2cm,xshift = -0.2cm,yshift = 0.3cm]270:\scriptsize{1}},label={[label distance=-0.1cm,xshift = -0.15cm,yshift = 0.3cm]350:\scriptsize{\color{blue}IV}}] (4) at (0,2) {$\sbullet[0.8]$};
		
		\draw [black,  opacity = 0.5, line width=0.1mm, shorten <=-8pt, shorten >=-8pt](1) -- (2a);
		\draw [black,  opacity = 0.5,  line width=0.1mm, shorten <=-10pt, shorten >=-8pt](1) -- (2b);
		\draw [black,  opacity = 0.5,  line width=0.1mm, shorten <=-10pt, shorten >=-8pt](1) -- (2c);
		\draw [black,  opacity = 0.5,  line width=0.1mm, shorten <=-8pt, shorten >=-8pt](1) -- (2d);
		\draw [black,  opacity = 0.5,  line width=0.1mm, shorten <=-8pt, shorten >=-8pt](2a) -- (3a);
		\draw [black,  opacity = 0.5,  line width=0.1mm, shorten <=-10pt, shorten >=-8pt](2a) -- (3b);
		\draw [black,  opacity = 0.5,  line width=0.1mm, shorten <=-10pt, shorten >=-10pt](2b) -- (3a);
		\draw [black,  opacity = 0.5,  line width=0.1mm, shorten <=-10pt, shorten >=-10pt](2b) -- (3c);
		\draw [black,  opacity = 0.5,  line width=0.1mm, shorten <=-8pt, shorten >=-8pt](2c) -- (3a);
		\draw [black,  opacity = 0.5,  line width=0.1mm, shorten <=-10pt, shorten >=-10pt](2c) -- (3d);
		\draw [black,  opacity = 0.5,  line width=0.1mm, shorten <=-8pt, shorten >=-8pt](2d) -- (3b);
		\draw [black,  opacity = 0.5,  line width=0.1mm, shorten <=-10pt, shorten >=-10pt](2d) -- (3c);
		\draw [black,  opacity = 0.5,  line width=0.1mm, shorten <=-8pt, shorten >=-8pt](2d) -- (3d);
		\draw [black,  opacity = 0.5,  line width=0.1mm, shorten <=-8pt, shorten >=-8pt](3a) -- (4);
		\draw [black,  opacity = 0.5,  line width=0.1mm, shorten <=-10pt, shorten >=-8pt](3b) -- (4);
		\draw [black,  opacity = 0.5,  line width=0.1mm, shorten <=-10pt, shorten >=-8pt](3c) -- (4);
		\draw [black,  opacity = 0.5,  line width=0.1mm, shorten <=-8pt, shorten >=-8pt](3d) -- (4);
		
		\end{tikzpicture}
	\end{subfigure}
	\begin{subfigure}{0.1\linewidth}
		\centering
		\begin{tikzpicture}[scale=1.3, transform shape]
		\node [label={[label distance=-0.2cm]90:\scriptsize{-6}}] (1) at (0, 5) {$\sbullet[0.8]$};
		\node [label={[label distance=-0.4cm,yshift = -0.21cm,xshift = -0.18cm]175:\scriptsize{2}},label={[label distance=-0.1cm,xshift = -0.1cm,yshift = 0.00cm]0:\scriptsize{\color{blue}I}}] (2a) at (-2.5,4) {$\sbullet[0.8]$};
		\node [label={[label distance=-0.4cm,yshift = -0.21cm,xshift = -0.18cm]175:\scriptsize{2}},label={[label distance=-0.1cm,xshift = -0.1cm]0:\scriptsize{\color{blue}I}}] (2b) at (-1.5,4) {$\sbullet[0.8]$};
		\node [label={[label distance=-0.4cm,yshift = -0.21cm,xshift = -0.18cm]175:\scriptsize{2}},label={[label distance=-0.1cm,xshift = -0.1cm]0:\scriptsize{\color{blue}I}}] (2c) at (-0.5,4) {$\sbullet[0.8]$};
		\node [label={[label distance=-0.25cm,yshift = -0.21cm,xshift = -0.27cm]87:\scriptsize{ 2}},label={[label distance=-0.1cm,xshift = -0.1cm]0:\scriptsize{\color{blue}I}}] (2d) at (0.5,4) {$\sbullet[0.8]$};
		\node [label={[label distance=-0.25cm,yshift = -0.21cm,xshift = -0.27cm]87:\scriptsize{ 2}},label={[label distance=-0.1cm,xshift = -0.1cm]0:\scriptsize{\color{blue}I}}] (2e) at (1.5,4) {$\sbullet[0.8]$};
		\node [label={[label distance=-0.25cm,yshift = -0.21cm,xshift = -0.27cm]:\scriptsize{ 2}},label={[label distance=-0.1cm,xshift = -0.1cm]0:\scriptsize{\color{blue}I}}] (2f) at (2.5,4) {$\sbullet[0.8]$};
		\node [label={[label distance=-0.2cm,xshift = 0.05cm]180:\scriptsize{-1}},label={[label distance=-0.1cm,xshift = -0.10cm]0:\scriptsize{\color{blue}II}}] (3a) at (-3,3) {$\sbullet[0.8]$};
		\node [label={[label distance=-0.2cm,xshift = 0.05cm]180:\scriptsize{-1}},label={[label distance=-0.1cm,xshift = -0.10cm]0:\scriptsize{\color{blue}III}}] (3b) at (-2,3) {$\sbullet[0.8]$};
		\node [label={[label distance=-0.25cm]180:\scriptsize{-1}},label={[label distance=-0.05cm,xshift = -0.16cm]0:\scriptsize{\color{blue}III}}] (3c) at (-1,3) {$\sbullet[0.8]$};
		\node [label={[label distance=-0.3cm,xshift=-0.03cm]180:\scriptsize{-1}},label={[label distance=-0.2cm,xshift = -0.05cm]0:\scriptsize{\color{blue}III}}] (3d) at (0,3) {$\sbullet[0.8]$};
		\node [label={[label distance=-0.2cm,xshift = 0.08cm]180:\scriptsize{-1 }},label={[label distance=-0.2cm,xshift = -0.03cm]0:\scriptsize{\color{blue}III}}] (3e) at (1,3) {$\sbullet[0.8]$};
		\node [label={[label distance=-0.2cm,xshift=0.10cm]180:\scriptsize{-1 }},label={[label distance=-0.2cm,xshift = -0.05cm]0:\scriptsize{\color{blue}II}}] (3f) at (2,3) {$\sbullet[0.8]$};
		\node [label={[label distance=-0.07cm,xshift = 0.2cm]180:\scriptsize{-1 }},label={[label distance=-0.2cm,xshift = -0.05cm]0:\scriptsize{\color{blue}II}}] (3g) at (3,3) {$\sbullet[0.8]$};
		\node [label={[label distance=-0.2cm,xshift = -0.2cm,yshift = 0.3cm]270:\scriptsize{1}},label={[label distance=-0.2cm,xshift = -0.05cm,yshift= 0.3cm]330:\scriptsize{\color{blue}V}}] (4) at (0,2) {$\sbullet[0.8]$};
		
		\draw [black,  opacity = 0.5,  line width=0.1mm, shorten <=-10pt, shorten >=-10pt](1) -- (2a);
		\draw [black,  opacity = 0.5,  line width=0.1mm, shorten <=-10pt, shorten >=-10pt](1) -- (2b);
		\draw [black,  opacity = 0.5,  line width=0.1mm, shorten <=-10pt, shorten >=-10pt](1) -- (2c);
		\draw [black,  opacity = 0.5,  line width=0.1mm, shorten <=-10pt, shorten >=-10pt](1) -- (2d);
		\draw [black,  opacity = 0.5,  line width=0.1mm, shorten <=-10pt, shorten >=-10pt](1) -- (2e);
		\draw [black,  opacity = 0.5,  line width=0.1mm, shorten <=-10pt, shorten >=-10pt](1) -- (2f);
		\draw [black,  opacity = 0.5,  line width=0.1mm, shorten <=-10pt, shorten >=-10pt](2a) -- (3a);
		\draw [black,  opacity = 0.5,  line width=0.1mm, shorten <=-10pt, shorten >=-10pt](2a) -- (3b);
		\draw [black,  opacity = 0.5,  line width=0.1mm, shorten <=-10pt, shorten >=-10pt](2a) -- (3c);
		\draw [black,  opacity = 0.5,  line width=0.1mm, shorten <=-10pt, shorten >=-10pt](2b) -- (3a);
		\draw [black,  opacity = 0.5,  line width=0.1mm, shorten <=-10pt, shorten >=-10pt](2b) -- (3d);
		\draw [black,  opacity = 0.5,  line width=0.1mm, shorten <=-10pt, shorten >=-10pt](2b) -- (3e);
		\draw [black,  opacity = 0.5,  line width=0.1mm, shorten <=-10pt, shorten >=-10pt](2c) -- (3b);
		\draw [black,  opacity = 0.5,  line width=0.1mm, shorten <=-10pt, shorten >=-10pt](2c) -- (3d);
		\draw [black,  opacity = 0.5,  line width=0.1mm, shorten <=-10pt, shorten >=-10pt](2c) -- (3f);
		\draw [black,  opacity = 0.5,  line width=0.1mm, shorten <=-10pt, shorten >=-10pt](2d) -- (3b);
		\draw [black,  opacity = 0.5,  line width=0.1mm, shorten <=-10pt, shorten >=-10pt](2d) -- (3e);
		\draw [black,  opacity = 0.5,  line width=0.1mm, shorten <=-10pt, shorten >=-10pt](2d) -- (3g);
		\draw [black,  opacity = 0.5,  line width=0.1mm, shorten <=-10pt, shorten >=-10pt](2e) -- (3c);
		\draw [black,  opacity = 0.5,  line width=0.1mm, shorten <=-10pt, shorten >=-10pt](2e) -- (3d);
		\draw [black,  opacity = 0.5,  line width=0.1mm, shorten <=-10pt, shorten >=-10pt](2e) -- (3g);
		\draw [black,  opacity = 0.5,  line width=0.1mm, shorten <=-10pt, shorten >=-10pt](2f) -- (3c);
		\draw [black,  opacity = 0.5,  line width=0.1mm, shorten <=-10pt, shorten >=-10pt](2f) -- (3e);
		\draw [black,  opacity = 0.5,  line width=0.1mm, shorten <=-10pt, shorten >=-10pt](2f) -- (3f);
		\draw [black,  opacity = 0.5,  line width=0.1mm, shorten <=-10pt, shorten >=-10pt](3a) -- (4);
		\draw [black,  opacity = 0.5,  line width=0.1mm, shorten <=-10pt, shorten >=-10pt](3b) -- (4);
		\draw [black,  opacity = 0.5,  line width=0.1mm, shorten <=-10pt, shorten >=-10pt](3c) -- (4);
		\draw [black,  opacity = 0.5,  line width=0.1mm, shorten <=-10pt, shorten >=-9pt](3d) -- (4);
		\draw [black,  opacity = 0.5,  line width=0.1mm, shorten <=-10pt, shorten >=-10pt](3e) -- (4);
		\draw [black,  opacity = 0.5,  line width=0.1mm, shorten <=-10pt, shorten >=-10pt](3f) -- (4);
		\draw [black,  opacity = 0.5,  line width=0.1mm, shorten <=-10pt, shorten >=-10pt](3g) -- (4);				
		\end{tikzpicture}
	\end{subfigure}
	\caption{The posets $\mathcal{P}_S$ for the codimension three strata in $X(3,6)$. The numbers in black are the values of the M\"obius function.
		The types of the strata are shown in blue.}
	\label{fig:posets}
\end{figure}
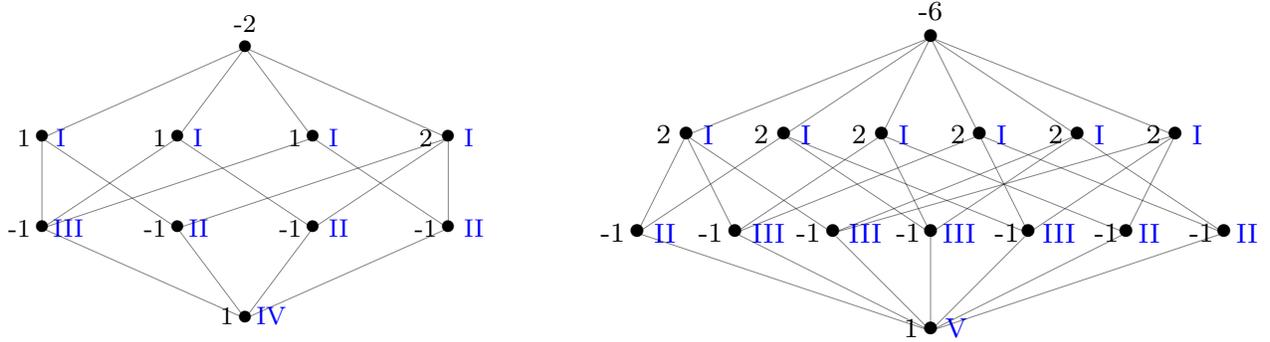

This phenomenon generalizes to other strata and to all spaces $X(3,m)$. Indeed, the only strata $S\in \mathcal{S}_{3,m}$ for which $\rho(S)\ne 0$ are those whose lines meet in only one extra special point apart from the original ones.  This is illustrated 
 in \cite[Figure~8]{CUZ}.
More precisely, for each stratum $S$, let $[P]=[P_1,\dots,P_m]$ be a general element in $S$, and for each $h\geq 3$ denote by $n_h(S)$ the number of points, apart from $P_1,\dots,P_m$, where exactly $h$ lines of the discriminantal arrangement $\mathcal{B}(P)$ meet. The next result proves the conjecture in~\cite[\S 6]{CUZ}: 

\begin{theorem}\label{thm:rhostrata}
Every stratum $S \in \mathcal{S}_{3,m}$ satisfies $\,\rho(S) \in \{-1,0,+1\}$. More precisely:
	\begin{enumerate}
		\item[(a)]  If $n_h(S)=1$ for some $h$ and $n_i(S)=0$ for all $i \ne h$, then $\rho(S) = (-1)^{h-1}$. 
		\item[(b)] In all other cases, we have $\rho(S)=0$.
	\end{enumerate}
\end{theorem}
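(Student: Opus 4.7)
The plan is to establish, for every stratum $S \in \mathcal{S}_{3,m}$, the identity
$$g(S) \,:=\, \chi(F_{X(3,m)}) - \chi(F_S) \,\,=\!\! \sum_{\substack{S' \supseteq S \\ S' \text{ pure}}}\! (-1)^{h(S')-1},$$
where \emph{pure} means $S'$ has exactly one extra multiple point, of multiplicity $h(S')$. Since $\rho(S) = \sum_{S' \supseteq S} \mu(S,S')\, g(S')$ is the M\"obius-inverted form of $g(S) = \sum_{S' \supseteq S} \rho(S')$ on the finite poset $\mathcal{S}_{3,m}$, uniqueness of M\"obius inversion then forces $\rho(S') = (-1)^{h(S')-1}$ on pure strata and $\rho(S') = 0$ otherwise, which is exactly Theorem \ref{thm:rhostrata}.

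First, I would compute $g(S)$ by inclusion-exclusion in $\mathbb{P}^2$. The fiber $F_S$ is the complement of the discriminantal arrangement of the $\binom{m}{2}$ lines $\overline{P_i P_j}$, so $\chi(F_S) = 3 - 2\binom{m}{2} + \sum_p (h_p - 1)$, the sum running over intersection points $p$ of multiplicity $h_p$. Because the $P_i$ are in linearly general position, each $P_i$ has multiplicity exactly $m-1$ in every fiber, and no generic pairwise intersection coincides with any $P_i$. Hence the only difference between $F_{X(3,m)}$ and $F_S$ occurs at the \emph{extra} multiple points $p_1, \dots, p_r$ of $F_S$: each such point of multiplicity $h_j$ replaces $\binom{h_j}{2}$ generic mult-$2$ intersections by a single mult-$h_j$ point, changing $\sum_p(h_p-1)$ by $(h_j-1) - \binom{h_j}{2} = -\binom{h_j-1}{2}$. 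Summing gives $g(S) = \sum_{h\geq 3} n_h(S)\binom{h-1}{2}$.

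Next, I would apply the binomial identity $\binom{h-1}{2} = \sum_{k=3}^h (-1)^{k-1} \binom{h}{k}$, an immediate consequence of $(1-1)^h = 0$. If $A_j \subseteq \{L_1,\dots,L_{\binom{m}{2}}\}$ denotes the set of $h_j$ lines through the extra point $p_j$, this rewrites $\binom{h_j-1}{2}$ as $\sum_{A \subseteq A_j,\,|A|\geq 3} (-1)^{|A|-1}$. Each subset $A$ of size $\geq 3$ defines a pure stratum $S_A$, the locus where the $|A|$ lines of $A$ are concurrent at one point, and one verifies that $S_A \supseteq S$ exactly when the conditions at $S$ force $A$ to be concurrent. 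Since the only extra concurrences at a generic $[P] \in S$ are the $A_j$'s, this holds iff $A \subseteq A_j$ for some $j$, yielding the claimed sum over pure ancestors of $S$.

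The crux is verifying that this enumeration of pure ancestors is bijective. The key geometric input is $|A_i \cap A_j| \leq 1$ for distinct extra points $p_i \neq p_j$: otherwise two shared lines $L_a, L_b \in A_i \cap A_j$ would force $p_i = L_a \cap L_b = p_j$. For $|A| \geq 3$ this rules out $A$ being contained in more than one $A_j$, so no overcounting occurs. A similar argument shows that if $A \subseteq A_j$ with $|A| \geq 2$, then $A$ cannot also lie in a pencil through an original point $P_k$ (two lines through the distinct points $p_j$ and $P_k$ must coincide), so each $S_A$ is a genuine pure stratum rather than the trivial $X(3,m)$. With bijectivity established, the topological and combinatorial evaluations of $g(S)$ agree, and M\"obius inversion delivers the theorem.
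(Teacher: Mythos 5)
Your proof is correct and takes essentially the same route as the paper: your Euler-characteristic computation of $g(S)=\sum_h n_h(S)\binom{h-1}{2}$ is exactly Lemma~\ref{lemma:sigma} (there called $\sigma(S)$), your enumeration of the pure ancestors of $S$ as the subsets $A\subseteq A_j$ with $|A|\ge 3$ reproduces the paper's count $N_h(S)=\sum_k n_k(S)\binom{k}{h}$, and the alternating binomial identity $\binom{k-1}{2}=\sum_{h\ge 3}(-1)^{h-1}\binom{k}{h}$ is the same one used there. The only real difference is logical packaging: you verify the un-inverted identity $g(S)=\sum_{S'\supseteq S}\tilde\rho(S')$ for all $S$ simultaneously and invoke uniqueness of M\"obius inversion on the finite poset, whereas the paper runs two explicit inductions (on $h$ for part (a), on $\sigma(S)$ for part (b)); a pleasant byproduct of your arrangement is that the monotonicity statement of Lemma~\ref{lemma:sigmaincrease} is not needed.
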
 

Item (b) is the punchline: a stratum $S$ does not contribute to the Euler characteristic unless
 it looks like \cite[Figure~8]{CUZ}.
To prove this, we introduce some further notation. For any stratum $S \in \mathcal{S}_{3,m}$ let $\sigma(S) \colon\!\!\!= \chi(F_{X(3,m)}) - \chi(F_S)$. When the discriminantal arrangement in the fiber $F_S$ over $S$ is realizable over $\mathbb{R}$, this denotes the difference between the numbers of bounded regions in 
$\mathcal{B}(3,m)$ and $F_S$. Applying the M\"obius inversion formula to \eqref{eq:rhoS} yields
\begin{equation}\label{eq:rhosigma}
\sigma(S) \,\,=\,\, \rho(S) \,\,+ \sum_{S'\in \mathcal{P}_S\setminus \{ S\}} \rho(S').
\end{equation}
We start by computing the function $\sigma(S)$
 in terms of the counting function $n_h(S)$:

\begin{lemma}\label{lemma:sigma}
	Every stratum $S\in \mathcal{S}_{3,m}$ satisfies
	$\, \sigma(S) \,=\, \sum_{h=3}^{\infty} \binom{h-1}{2}  \cdot n_h(S)$.
\end{lemma}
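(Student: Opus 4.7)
The plan is to compute $\chi(F_S)$ directly in terms of the combinatorics of the discriminantal line arrangement $\mathcal{B}(P)$ for $[P] \in S$, and then subtract off the generic case. Recall that the fiber $F_S = \pi_{3,m}^{-1}([P])$ is the complement in $\mathbb{P}^2$ of the $N := \binom{m}{2}$ lines $\overline{P_i P_j}$. The first step is to establish the general formula
\begin{equation}\label{eq:chiarrgenformula}
\chi\bigl(\mathbb{P}^2 \setminus \textstyle\bigcup_H H\bigr) \,\,=\,\, 3 \,-\, 2N \,+\, \sum_{h \geq 2}(h-1)\,\nu_h,
\end{equation}
where $\nu_h$ is the total number of points of $\bigcup_H H$ at which exactly $h$ lines meet. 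This is a routine stratification computation: each projective line contributes $\chi(\mathbb{P}^1)=2$, and by inclusion-exclusion (or by stratifying each line as its intersection points plus open complements and using additivity) one finds $\chi(\bigcup H) = 2N - \sum_h(h-1)\nu_h$, and then subtracts from $\chi(\mathbb{P}^2) = 3$. I would verify \eqref{eq:chiarrgenformula} against Example~\ref{example:X36}, where the generic fiber over $X(3,6)$ gives $42$ and the stratum of Type~I gives $41$.

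The second step is to record the intersection data. For any $[P] \in X(3,m)$ in general linear position, the $m$ points $P_1,\dots,P_m$ themselves are $(m-1)$-fold points of $\mathcal{B}(P)$, and every other intersection point has multiplicity given by $n_h(S)$. Counting pairs of lines gives
\[
\binom{N}{2} \,\,=\,\, m\binom{m-1}{2} \,+\, \sum_{h\ge 3}\binom{h}{2}\,n_h(S) \,+\, \nu_2(S),
\]
which determines $\nu_2(S)$. In particular, passing from the generic stratum (where $n_h = 0$ for $h \ge 3$) to an arbitrary stratum $S$ produces the changes
\[
\Delta \nu_h \,=\, n_h(S)\ \text{for}\ h\ge 3,\qquad \Delta\nu_2 \,=\, -\sum_{h\ge 3}\binom{h}{2}\,n_h(S),
\]
while $\nu_{m-1}$ keeps its contribution $m$ from the original points (any $h=m-1$ additions are absorbed into $n_{m-1}(S)$, matching the convention in the paper).

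Plugging these deltas into \eqref{eq:chiarrgenformula}, the third and final step is the bookkeeping
\[
-\sigma(S) \,=\, \chi(F_S) - \chi(F_{X(3,m)}) \,=\, \sum_{h\ge 3}(h-1)\,n_h(S) \,-\, \sum_{h\ge 3}\binom{h}{2}\,n_h(S) \,=\, -\sum_{h\ge 3}\binom{h-1}{2}\,n_h(S),
\]
using $\binom{h}{2} - (h-1) = \binom{h-1}{2}$. This yields the desired identity. The main obstacle is conceptual rather than technical: one must be careful that \eqref{eq:chiarrgenformula} really is independent of whether the arrangement is very generic and that the partition of intersection points into ``original'' and ``new'' matches the convention for $n_h(S)$; once this is pinned down, the proof reduces to a one-line counting argument.
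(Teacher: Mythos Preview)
Your proof is correct and follows essentially the same approach as the paper's. The only difference is packaging: the paper works with the affine arrangement $\mathcal{B}$ (one line at infinity) and tracks the change in the constant term $b_2$ of the characteristic polynomial, while you compute the Euler characteristic of the projective complement directly via the stratification formula \eqref{eq:chiarrgenformula}; in both cases the change per new $h$-fold concurrency is $(h-1)-\binom{h}{2}=-\binom{h-1}{2}$, and the argument is the same one-line count.
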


\begin{proof}
	Let $\mathcal B$ be the discriminantal arrangement associated to the fiber~$F_S$.
	Let $\mathcal{B}(3,m)$ be as in Section \ref{section3}.
	Its characteristic polynomial is $\chi_{\mathcal B(3,m)}=t^2- \left ( \binom{m}{2} -1 \right) t+b_2(\mathcal{B}(3,m))$ where $b_2(\mathcal{B}(3,m))$ is the second Betti number of $\mathcal{B}(3,m)$.
	Using \eqref{eq:char_poly}, the characteristic polynomial of the 
	special arrangement~$\mathcal{B}$ equals
	\[
	\chi_{\cal B}(t)\,\,=\,\,t^2 -\left ( \binom{m}{2} -1 \right)t + b_2(\mathcal{B}(3,m)) -\sum_{h=3}^{\infty} n_h(S)\binom{h}{2}+ \sum_{h=3}^{\infty} n_h(S)(h-1),
	\]
	since every additional intersection point of $h$ lines in $\mathcal B$ removes $\binom{h}{2}$ many generic codimension two elements of $L(\mathcal B(3,m))$ and adds a summand with M\"obius value $h-1$. 	Evaluating these two polynomials at $t=1$ and taking their difference yields the claim.	
\end{proof}

We now show that $\sigma$ is monotonic with respect to the poset $\mathcal{S}$ in the following sense.

\begin{lemma}\label{lemma:sigmaincrease}
	Let $S',S \in \mathcal{S}_{3,m}$ be two strata such that $S' \supsetneq S$. Then $\sigma(S') < \sigma(S)$.
\end{lemma}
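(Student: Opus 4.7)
The plan is to invoke Lemma \ref{lemma:sigma}, which gives $\sigma(S) = \sum_{h \geq 3} \binom{h-1}{2} n_h(S)$, and show directly that this weighted count of extra concurrence points is strictly monotone under specialization. The lines $\overline{P_iP_j}$ of $\mathcal B(P)$ are indexed by $\binom{[m]}{2}$ uniformly across strata; what varies from stratum to stratum is the pattern of $(\geq 3)$-wise concurrences, which I encode by the rank-2 flats of size $\geq 3$ in the associated matroid of lines, excluding the trivial flats at the $P_i$'s. Write $\mathcal F(S)$ for this collection.

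Next, I would record the refinement structure. Because every incidence holding on $S'$ persists on the closure $\overline{S'} \supseteq S$, there is a canonical map
\[
\phi \colon \mathcal{F}(S') \,\longrightarrow\, \mathcal{F}(S)
\]
sending each flat of $S'$ to the unique rank-2 flat of the matroid of $S$ containing it (uniqueness follows since two rank-2 flats sharing three common elements are equal). Extra points of $S'$ thus either grow, merge with other extra points, or stay put when passing to $S$; new extra points may also appear in $S$. For each $F \in \mathcal{F}(S)$ of size $h$, write $\phi^{-1}(F) = \{F_1, \dots, F_r\}$ of sizes $h_1, \dots, h_r$, where $r = 0$ signals that $F$ is a genuinely new concurrence of $S$.

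Partitioning $\mathcal{F}(S')$ via $\phi$, I would then write
\[
\sigma(S) - \sigma(S') \,=\, \sum_{F \in \mathcal{F}(S)} \Biggl( \binom{h-1}{2} - \sum_{i=1}^{r} \binom{h_i-1}{2} \Biggr)
\]
and check that each summand is nonnegative, with strict positivity in the nontrivial cases. Four patterns arise: $(i)$ $r=0$ contributes $\binom{h-1}{2} \geq 1$; $(ii)$ $r=1$ with $h=h_1$ contributes $0$; $(iii)$ $r=1$ with $h > h_1$ is positive by strict monotonicity of $\binom{\cdot-1}{2}$; $(iv)$ $r \geq 2$ is the merger case. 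For $(iv)$, two distinct extra points of $S'$ share at most the unique line joining them, so $h \geq \sum_i h_i - \binom{r}{2}$; direct expansion then gives strict positivity, e.g., for $r=2$ with one shared line the contribution is $(h_1-1)(h_2-1) \geq 4$ and with no shared line it is $h_1h_2 - 1 \geq 8$, with analogous bounds for $r \geq 3$ via convexity of $\binom{\cdot-1}{2}$.

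To finish, I would observe that $S' \supsetneq S$ forces at least one term of type $(i)$, $(iii)$, or $(iv)$: otherwise $\phi$ is a size-preserving bijection, so $S$ and $S'$ have identical matroids of lines, contradicting $S \neq S'$. Termwise positivity then yields $\sigma(S) > \sigma(S')$. The main obstacle is the bookkeeping in case $(iv)$, both in justifying $h \geq \sum h_i - \binom{r}{2}$ (where one must argue that triple or higher overlaps among the $F_i \cap F_j$ can only enlarge $h$, not shrink it) and in checking that the resulting binomial inequality is strictly positive in every possible merger pattern.
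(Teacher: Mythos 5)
Your overall strategy---computing $\sigma(S)-\sigma(S')$ termwise via the map $\phi$ and the formula of Lemma \ref{lemma:sigma}---is a genuinely different route from the paper, which instead reduces the claim to $b_2(\mathcal B_{S'})>b_2(\mathcal B_S)$ and proves a general strict inequality of second Betti numbers for weak maps of line arrangements by induction on the number of lines, using deletion--restriction. That inductive argument sidesteps all of your merger bookkeeping. Your setup through cases $(i)$--$(iii)$ is fine, and the final observation that $S'\supsetneq S$ forces a nontrivial term is correct.

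However, case $(iv)$ as written has a real gap: the Bonferroni bound $h\ge\sum_i h_i-\binom{r}{2}$ is too weak to yield $\binom{h-1}{2}>\sum_i\binom{h_i-1}{2}$ once $r$ is large. For example, with $r=6$ and all $h_i=3$ the bound only gives $h\ge 3$, and $\binom{2}{2}=1<6=\sum_i\binom{h_i-1}{2}$; convexity of $\binom{\cdot-1}{2}$ does not rescue this, because the lower bound on $h$ itself \emph{decreases} in $r$. The fix is to replace Bonferroni by a pair-counting (packing) bound: since distinct concurrence points of $S'$ share at most one line, the pairs of lines internal to the $F_i$ are disjoint, and all lie among the $\binom{h}{2}$ pairs through $F$, so $\sum_i\binom{h_i}{2}\le\binom{h}{2}$. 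Then
\[
\sum_i\binom{h_i-1}{2}\;=\;\sum_i\frac{h_i-2}{h_i}\binom{h_i}{2}\;\le\;\frac{h-2}{h}\sum_i\binom{h_i}{2}\;\le\;\frac{h-2}{h}\binom{h}{2}\;=\;\binom{h-1}{2},
\]
using $h_i\le h$, with equality forcing $r=1$ and $h_1=h$; so every fiber with $r\ge 2$ contributes strictly positively. With this substitution your argument closes, and cases $(i)$ and $(iii)$ already stand as you wrote them.
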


\begin{proof}
	The assertion is equivalent to $\chi(F_{S'}) =\chi_{\mathcal B_{S'}}(1)> \chi(F_{S})=\chi_{\mathcal B_{S}}(1)$ where $\mathcal B_{S}$ and $\mathcal B_{S'}$ are the discriminantal arrangements
	corresponding to the fibers $F_S$ and $F_{S'}$, respectively.
	Since each arrangement has the same number of lines, it suffices to prove $b_2(\mathcal B_{S'}) > b_2(\mathcal B_{S})$. 
	
	We will establish a general statement, namely
a strict inequality between the Betti numbers of two arrangements where one is a strict weak image of the other in the sense of matroid theory.
	Let $\mathcal A=\{H_1,\dots,H_n\}$ and $\mathcal{A}'=\{H_1',\dots,H_n'\}$ be two affine arrangements of lines in $\CC^2$ with $n\ge 3$.
We claim that $b_2(\mathcal A) < b_2(\mathcal A')$ holds under the following two assumptions:
	\begin{enumerate}
		\item[(i)]  If for $I\subseteq [n]$  the lines $\{H_i'\}_{i \in I}$ meet in $\mathcal A'$ then the lines $\{H_i\}_{ i \in I}$ also meet in $\mathcal{A}$, 
		\item[(ii)]The lines $H_1,H_2,H_3$ intersect but the lines $H_1',H_2',H_3'$ do not intersect.
	\end{enumerate}
		We prove this claim by induction on $n$.
	If $n=3$ then the statement is trivial since (ii) forces $\mathcal A'$ to consist of three generic lines and $\mathcal A$ of three concurrent lines, so $b_2(\mathcal A) = 2< 3=b_2(\mathcal A')$.
	
	Now fix some $n>3$.
	The deletion--restriction relation for the characteristic polynomial (cf.~\cite[Corollary 2.57]{OT}) implies $b_2(\mathcal A) = b_2(\mathcal A\setminus\{H_n\}) + |\mathcal A^{H_n}|$ where $\mathcal A^{H_n}$ is the restriction of $\mathcal A$ to $H_n$. The analogous relation holds for $\mathcal A'$.
	Since removing the last hyperplane preserves both assumptions (i) and (ii), we have $b_2(\mathcal A\setminus\{H_n\}) < b_2(\mathcal A'\setminus\{H_n'\})$ by induction.
	Moreover, assumption (i) implies $|\mathcal A^{H_n}| \le  |\mathcal A'^{H_n'}|$.
	So in total this proves $b_2(\mathcal A) < b_2(\mathcal A')$.
\end{proof}

\begin{proof}[Proof of Theorem \ref{thm:rhostrata}]
Note that $\rho(X(3,m)) = 0$ by definition. Let $S$ be a stratum with the property that $n_h(S)=1$ and all other are zero. Observe that the only $S' \subsetneq X(3,m)$ strictly containing $S$ are the $\binom{h}{j}$ strata $S'$ with $n_j(S')=1$ and all others zero for $j=3,\ldots h-1$. If $h=3$ then $S$ is a codimension one stratum and is only strictly contained in $X(3,m)$, so Lemma \ref{lemma:sigma} and formula \eqref{eq:rhosigma} show that $\rho(S)=1$. We now rewrite \eqref{eq:rhosigma} via Lemma \ref{lemma:sigma} as 
	\[ \binom{h-1}{2}\,\, =\,\, \rho(S) + \binom{h}{3} - \binom{h}{4} + \dots + (-1)^{h} \binom{h}{h-1} \]
	by induction on $h$. This shows that $\rho(S) = (-1)^{h-1}$.
	
	We prove part (b) by induction on $\sigma(S)$. The base case $\sigma(S)=0$ corresponds to the stratum $X(3,m)$, for which we know already that $\rho(S)=0$. For the induction step, let $S \in \mathcal{S}_{3,m}$ be another stratum that is not one of those considered in part (a). For all strata $S'\in \mathcal{P}_S\setminus \{S\}$ we have $\sigma(S') < \sigma(S)$ by Lemma \ref{lemma:sigmaincrease}.
	The induction hypothesis shows that $\rho(S')=0$ for all strata that are not of the type of part (a). Thus, we have 
	\[
	\sigma(S) \,\,=\,\, \sum_{h=3}^{\infty} \binom{h-1}{2} \cdot n_h(S)  = \rho(S) + \sum_{h=3}^{\infty} N_h(S) \cdot (-1)^{h-1} 
	\]
	where $N_h(S)$ is the number of strata $S' \in \mathcal{P}_S\setminus\{S\}$ like those in part (a). This number is given directly by $N_h(S) = \sum_{k=3}^{\infty} n_k(S) \cdot \binom{k}{h}$. We conclude that
	$$
	\sum_{h=3}^{\infty} \binom{h\! - \! 1}{2} \cdot n_h(S)  \,\, = \,\,
	  \rho(S) + \sum_{k=3}^{\infty} n_k(S)\cdot  \sum_{h=3}^{\infty} (-1)^{h-1} \binom{k}{h} 
	 \,\, = \,\, \rho(S) + \sum_{k=3}^{\infty} n_k(S) \cdot \binom{k \! - \! 1}{2}  .
$$
	This implies $\rho(S)=0$ for all strata $S$ other than those in case (a).
\end{proof}

Equipped with Theorem \ref{thm:rhostrata}, we can prove Theorem \ref{thm:numbersk=3} with a few calculations.

\begin{proof}[Proof of Theorem \ref{thm:numbersk=3}]
	We want to use the formula \eqref{eq:chirho}. By Theorem \ref{thm:rhostrata}, we need to consider only those strata $S\in \mathcal{S}_{3,m}$ where $n_h(S)=1$ for a certain $h$ and $n_i(S)=0$ for all $i\ne h$. The number of these strata in $X(3,m)$ is $\frac{1}{h!}\binom{m}{2}\cdot \binom{m-2}{2} \dots \binom{m-2h+2}{2}$. Furthermore, all these strata are combinatorially equivalent, so we write $\chi(3,m;h)$ for the Euler characteristic of any of them. Using Theorem \ref{thm:rhostrata} we rewrite the formula \eqref{eq:chirho} as follows:
$$ \chi(X(3,m+1)) \,\,=\,\, \chi(X(3,m))\cdot \chi(F_{X(3,m)}) \,+\, \sum_{h=3}^{\infty}{\textstyle \frac{(-1)^h}{h!}\binom{m}{2}\cdot \binom{m-2}{2} \cdots \binom{m-2h+2}{2} }\cdot \chi(3,m;h).  $$
	In particular, when $m=6,7$ the formula becomes
	\begin{align*}
	\chi(X(3,7)) &= \chi(X(3,6))\cdot \chi(F_{X(3,6)})- 15 \cdot \chi(3,6;3), \\
	\chi(X(3,8)) &= \chi(X(3,7))\cdot \chi(F_{X(3,7)}) - 105 \cdot \chi(3,7;3).
	\end{align*}	
		The Euler characteristic $\chi(X(3,6)) = 26$ is obtained in Example \ref{example:X36}, whereas the bounded chamber counts $\chi(F_{X(3,6)})=42$ and $\chi(F_{X(3,7)}) = 101$ are taken from Table \ref{tab:parallel}. To compute $\chi(3,m;3)$, we note that corresponding stratum in
	$X(3,m)$ is isomorphic to a matroid stratum of codimension
	$3$ in $X(3,m+1)$, obtained by requiring that the new point $P_{m+1}$
	lies on the three special lines. Those matroid strata
	have Euler characteristic $\chi(3,6;3) = -12$ for $m=6$ and $\chi(3,7;3) = -568$. 
	This is proved either geometrically, or by computer algebra.
	Note that these strata are identified uniquely in
	Theorem \ref{thm:matroid_ml}, namely for $m=6$   under $\,3_5[ \ldots, 12]$,
	and for $m=7$ under $\,5_7[\ldots, 568]$. In conclusion, we can write $	\chi(X(3,7)) = 26 \cdot 42 + 15 \cdot 12 = 1\,272$ and $ 
	\chi(X(3,8)) = 1\,272 \cdot 101 + 105 \cdot 568  = 188\,112$.
	If instead $m=8$ the formula becomes
	\[ \chi(X(3,9)) = \chi(X(3,8)) \cdot \chi(F_{X(3,8)}) - 420 \cdot \chi(3,8;3) + 105 \cdot \chi(3,8;4). \]
	Table \ref{tab:parallel} shows that $\chi(F_{X(3,8)})=205$. The numbers $\chi(3,8;3)=-81\,040$ and $\chi(3,8;4)=18\,768$ can be computed using \texttt{HomotopyContinuation.jl}~\cite{HCjl} as the ML degrees of the corresponding strata.
	Note that $18\,768$ appears in Theorem~\ref{thm:matroid_ml} as the ML degree of the rank $3$ matroid on $9$ elements corresponding to four lines meeting in a point. The number $81\,040$ does not appear in this theorem.
	However, $81\,040 = 99\,808  - 18\,768$  where $99\,808$ is the ML degree of the matroid on $9$ elements where three lines meet in a point. Thus, the contribution above stems from this matroid strata where one needs to remove the locus of four concurrent lines. In conclusion, we have $\chi(X(3,9)) = 188\,112 \cdot 205 + 420 \cdot 81\,040 + 105 \cdot 18\,768 = 74\,570\,400$.\end{proof}

As a proof of concept, we show how this story extends to nonuniform matroids. 

\begin{example}
\label{ex:NonUniformMLDegree}
	Let $M$ be the matroid of rank $3$ on $9$ elements with one nonbasis $789$. The 
	map $X(M) \to X(3,8)$ which forgets the ninth point is a surjection. The generic fiber  is no longer the complement of $\mathcal B(3,8)$, but rather its restriction to the line $\overline{P_7P_8}$.
	That restricted arrangement has $15$ bounded regions. The nongeneric fiber over $S_{(ij)(kl)(rs)}$ has $14$ bounded regions provided $(78) \in \{(ij),(kl),(rs)\}$, and $15$ otherwise. There are $45$ strata of the form $S_{(ij)(kl)(78)}$. Similarly, the fiber over a stratum of four concurrent lines has $13$ bounded regions provided $\overline{P_7P_8}$ is one of those lines. There are $15$ such codimension $2$ strata. This gives
	\[\chi(X(M)) \,\,=\,\, 188\,112 \cdot 15 \,+\, 45 \cdot 81\,040 \,+\, 15 \cdot 18768 \,\,=\,\, 6\,750\,000,
	\]
	as a decomposition of $6\,750\,000$, verifying the entry $\,9_1[6750000_1]\,$ in Theorem \ref{thm:matroid_ml}.  \end{example}

\section{Eight points in 3-space} 
\label{section6}

We now turn to configurations in $\PP^3$. For $k=4$ and $m \leq 7$, 
we can apply Grassmann duality, which shows that $X(4,m) = X(m-4,m)$.
Hence the ML degrees for few points in $\PP^3$ are
$$
 |\chi(X(4,6))| = |\chi(X(2,6))| = 6 \quad {\rm and} \quad \chi(X(4,7)) = \chi(X(3,7)) = 1\,272.
$$
This section is devoted to the $9$-dimensional very affine variety $X(4,8)$.
Here is our result:

\begin{theoremstar} \label{conj:chi48}
The Euler characteristic of $\,X(4,8)$ equals $-\,5\,211\,816$.
\end{theoremstar}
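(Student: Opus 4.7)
The plan is to establish $\chi(X(4,8)) = -5\,211\,816$ by combining the stratified-fibration machinery of Section~\ref{section2} with numerical homotopy continuation of the scattering equations, and cross-validating through the tropical soft-limit analysis developed in the rest of this section. The deletion map $\pi_{4,7}\colon X(4,8) \to X(4,7)$ is the natural setup. By Grassmann duality $X(4,7) \cong X(3,7)$, so Theorem~\ref{thm:numbersk=3} gives $\chi(X(4,7)) = 1272$. The generic fiber is the complement of the discriminantal arrangement $\mathcal{B}(4,7)$ of $\binom{7}{3} = 35$ planes in $\mathbb{P}^3$; by Table~\ref{tab:parallel} and Remark~\ref{rmk:EulerBkm}, its Euler characteristic is $(-1)^3 \cdot 1858 = -1858$. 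The ``generic part'' of the formula (\ref{eq:chirho}) therefore contributes $1272 \cdot (-1858) = -2\,363\,376$, leaving $-2\,848\,440$ to be accounted for by the non-generic strata of the discriminantal stratification $\mathcal{S}_{4,7}$.

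The second step mirrors the route of Section~\ref{section5}: identify those strata $S \in \mathcal{S}_{4,7}$ with $\rho(S) \ne 0$ and sum their contributions via~\eqref{eq:chirho}. For $k=4$ these are seven-point configurations in $\mathbb{P}^3$ whose $35$ discriminantal planes exhibit coincidences beyond Grassmann--Pl\"ucker, for example quadruples of planes sharing an extra line or quintuples sharing an extra point. For each combinatorial type I would compute $\chi(S)$ via the matroid-realization and certified-homotopy pipeline of Section~\ref{section4} (many values already appear in Theorem~\ref{thm:matroid_ml} for $(k,m)=(4,8)$), and $\chi(F_S)$ as the Euler characteristic of the restricted, non-generic arrangement, extracted from the software of~\cite{BHK}. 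The M\"obius sum~\eqref{eq:rhoS} then yields $\rho(S)$, and substitution into~\eqref{eq:chirho} completes the combinatorial enumeration.

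Independently, and as the primary certification behind this starred theorem, I would implement the scattering equations~\eqref{eq:scattering} for $(k,m) = (4,8)$ in {\tt HomotopyContinuation.jl}~\cite{HCjl}, track all solutions by monodromy or a total-degree start system as in~\cite{ST}, and apply the interval-arithmetic certification of~\cite{BRT} to obtain a rigorous lower bound of $5\,211\,816$ on $\mathrm{MLdegree}(X(4,8))$, matching the combinatorial count. The tropical analysis of soft limits, which occupies the body of Section~\ref{section6}, provides a third, parallel decomposition of the $5\,211\,816$ critical points along tropical curves and should recover the predictions of~\cite[Table~2]{CUZ} stratum by stratum.

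The principal obstacle is combinatorial: the intersection lattice of $\mathcal{B}(4,7)$ is substantially richer than that of $\mathcal{B}(3,6)$, so the clean trichotomy $\rho(S) \in \{-1,0,+1\}$ from Theorem~\ref{thm:rhostrata} cannot be expected to persist, and the inductive argument based on the monotonicity of $\sigma(S)$ (Lemma~\ref{lemma:sigmaincrease}) must be replaced by a direct case analysis over a larger poset of matroid degenerations in $\mathbb{P}^3$. It is precisely the interplay between the tropical soft-limit picture and the stratified fibration that makes this bookkeeping tractable: each nontrivial tropical stratum flags a specific geometric contribution, so matching the numerical count $5\,211\,816$ reduces to verifying a finite, computer-assisted case list.
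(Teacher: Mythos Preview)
Your outline is broadly aligned with the paper's strategy, but it over-promises on two fronts, and this matters because the result is a \emph{starred} theorem precisely because neither of these promises is fulfilled.

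First, you write that interval-arithmetic certification via~\cite{BRT} will yield ``a rigorous lower bound of $5\,211\,816$''. The paper reports otherwise: only $5\,211\,598$ of the monodromy solutions could be certified, leaving $218$ uncertified. So full certification is not available, and the number $5\,211\,816$ is supported only by the raw monodromy count together with the independent soft-limit decomposition, both of which are numerical.

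Second, you propose to complete the stratified-fibration computation over $\mathcal{S}_{4,7}$ by enumerating the strata with $\rho(S)\ne 0$ and summing their contributions, as was done for $k=3$. The paper does not carry this out. What it actually does is track all $5\,211\,816$ numerical solutions through the soft-limit degeneration~\eqref{eq:L48withparameter}, learn the tropical critical points~\eqref{eq:valpatterns} via the regression procedure of Algorithm~\ref{algo:computetropicalcriticalpoints}, and read off the seven types and their multiplicities $B_{\text I},\ldots,B_{\text{VII}}$ in~\eqref{eq:48Multiplicities} directly from that output. The decomposition~\eqref{eq:decompchi48} is thus obtained numerically, not combinatorially. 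The paper explicitly remarks that, unlike the $k=3$ case, not all ray multiplicities $B_\bullet$ equal the ML degree of a corresponding stratum in $X(4,7)$, and that the type~I stratum even has the wrong dimension for a direct identification. So the clean stratum-by-stratum bookkeeping you envision does not straightforwardly go through, and the paper does not claim to have a combinatorial substitute.

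In short: your proposal describes a rigorous program that would, if completed, upgrade the result to an unstarred theorem. The paper's actual derivation is the weaker, numerically supported one you list as ``third, parallel'': monodromy finds $5\,211\,816$ solutions, and the numerically computed soft-limit decomposition reproduces this total, with Proposition~\ref{prop:regsols48} as the only piece proved without numerical input.
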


The number $\,5\,211\,816\,$ is new.
Unlike the ML degrees in Theorem \ref{thm:numbersk=3},
it did not yet appear in the physics literature.
Also, Lam's finite field method (Appendix \ref{appendixA})
 did not yield this number.
Thus far, the $(4,8)$ case was out of reach for 
all available techniques, in spite of the progress in
\cite[\S 6]{CUZ}. Our result proves the
conjecture stated tacitly in
\cite[Table~2]{CUZ}.

Our derivation of Theorem* \ref{conj:chi48} rests on numerical
computations with the software {\tt HomotopyContinuation.jl} \cite{HCjl}.
However, the methodology is closely related to the topological approach seen
in previous sections. We fully exploit a specific likelihood degeneration,  namely the
soft limits \cite{CUZ}, to analyze $X(4,8)$ in a way that is similar to
Lemma \ref{lemma:stratifiedfibration}

We start by describing our computational setup. For each quadruple $(i,j,k,\ell)$, 
where $1 \leq i < j < k < \ell \leq 8$, let $p_{ijk\ell}$ be the determinant of the
corresponding $4 \times 4$ submatrix of 
\begin{align} \label{eq:48matrix}
M_{4,8} \,\,=\,\, \begin{small} \begin{bmatrix}
0 & 0 & 0 & 1 & 1 & 1 & 1 & 1 \\
0 & 0 &-1 & 0 & 1 & x_1 & x_2 & x_3 \\ 
0 & 1 & 0 & 0 & 1 & y_1 & y_2 & y_3 \\
-1 & 0 & 0 & 0 & 1 & z_1 & z_2 & z_3
\end{bmatrix}. \end{small}
\end{align}
 Let ${\cal I}$ be the set of all such quadruples with $k \geq 4$ and $\ell \geq 6$. We write $(x,y,z)$ for the nine variables appearing in \eqref{eq:48matrix}. Note that $ X(4,8)$ is the complement of the union of hypersurfaces ${\cal H} = \bigcup_{(i,j,k,\ell) \in {\cal I}} V(p_{ijk\ell}) \subset \CC^9$. This is a very affine variety in $(\CC^*)^{62}$ with parametrization $\CC^9 \setminus {\cal H} \rightarrow  (\CC^*)^{62}$ given by 
$(x,y,z) \mapsto (p_{ijk\ell}(x,y,z))_{(i,j,k,\ell) \in {\cal I}}.$
By  \cite[Theorem~1]{Huh13}, the quantity $|\chi(X(4,8))|$ is the number of critical points of the log-likelihood function 
$${\cal L}_{4,8}(x,y,z) \,\,\, = \sum_{(i,j,k,\ell) \in {\cal I}} \!\!\! u_{ijk\ell} \cdot \log(p_{ijk\ell}(x,y,z)).$$
Here we count critical points in $\CC^9 \backslash {\cal H}$, for generic data $u_{ijk\ell} \in \CC$.
In other words, we seek
   the number of solutions to the following system of 9 rational function equations in 9 unknowns:
\begin{align} \label{eq:critpt48}
\frac{\partial { \cal L}_{4,8}}{\partial x_i}\, = \,\frac{\partial {\cal L}_{4,8}}{\partial y_i} \,=
\, \frac{\partial {\cal L}_{4,8}}{\partial z_i}\, =\, 0 \qquad {\rm for} \quad i = 1,2, 3.
\end{align}
In particle physics, these are the \emph{scattering equations} for $X(4,8)$ in the CEGM model \cite{CEGM}. The projection of the likelihood correspondence \cite{HS, VS}
to the space of data is a branched covering of degree $|\chi(X(4,8))|$. For generic complex 
numbers $u_{ijk\ell}$, the fiber can be computed using the command \texttt{monodromy\_solve} in \texttt{HomotopyContinuation.jl}, as explained in \cite[\S 3]{ST}. 
In principle, we can use the \texttt{certify} command \cite{BRT}
to give a proof of the inequality 
$$|\chi(X(4,8))|\,\, \geq \,\,5\,211\,816. $$
In practise, we missed $218$ solutions.
In our run,
the method quickly certified that $5\,211\,598$ paths correspond to distinct true solutions, giving a proof that 
$|\chi(X(4,8))| \geq 5\,211\,598$.

The main idea in this section is to follow up this brute force monodromy computation by a likelihood degeneration of \eqref{eq:critpt48} to study $X(4,8)$ in a more structured way. In particular, the degeneration will help us to decompose the number $|\chi(X(4,8))|$ into positive summands, much like what we did in the Section \ref{section5}. We keep assuming that the data $u_{ijk\ell}$ are generic. 

We introduce a parameter $t$ into the log-likelihood function ${\cal L}_{4,8}$ by setting
\begin{equation}
\label{eq:L48withparameter}
 \tilde{\cal L}_{4,8}(x,y,z,t) \,\,\,= \sum_{\substack{ (i,j,k,\ell) \in {\cal I} \\ \ell < 8}}
 \!\!\!\! u_{ijk\ell} \cdot \log (p_{ijk\ell}(x,y,z)) 
\, \,\,+\!\! \sum_{(i,j,k,8) \in {\cal I}}
\!\!\!\! u_{ijk8} \cdot t \cdot \log (p_{ijk8}(x,y,z)).
 \end{equation}
The limit for $t \rightarrow 0$ is the soft limit in \cite{CUZ}. Taking partial derivatives, we obtain rational function equations in the unknowns $x,y,z$ with coefficients in the rational function field $\CC(t)$:
\begin{align} \label{eq:critpt48t}
\frac{\partial \tilde{\cal L}_{4,8}(x,y,z;t)}{\partial x_i} \,\,=\,\,
 \frac{\partial \tilde{\cal L}_{4,8}(x,y,z;t)}{\partial y_i} \,\,=\,\,
  \frac{\partial \tilde{\cal L}_{4,8}(x,y,z;t)}{\partial z_i} \,\,=\,\, 0 \quad {\rm for} \,\,\,\ i = 1, 2, 3.
\end{align}
There are $|\chi(X(4,8))|$ solutions $(\hat{x}(t),\hat{y}(t),\hat{z}(t))$ over 
the field of Puiseux series   $K = \CC \{\!\{t\}\!\}$.
We are interested in computing the valuations of the Pl\"ucker coordinates
for these solutions. In other words, suppose we knew these solutions,
and suppose we were to
substitute them into the $4 \times 8$ matrix (\ref{eq:48matrix}). Each of its 
 $4 \times 4$ minors is then a Puiseux series $\hat{p}_{\bullet}(t)$, and we could 
 consider the lowest order exponent  of that series.
This is the $t$-adic valuation of $\hat{p}_{\bullet} = \hat{p}_{\bullet}(t)$, denoted ${\rm val}_t(\hat{p}_{\bullet})$.
The result of this process would be the tropical Pl\"ucker vector
\begin{align} \label{eq:valpatterns}
\hat{q} \,\,=\,\,  \bigl(\,{\rm val}_t \bigl(\,p_{ijk\ell}(\hat{x}(t),\hat{y}(t),\hat{z}(t))\,\bigr)\,
\bigr)_{1\leq i < j < k < \ell \leq 8} \,\,\in\,\, \QQ^{70}.
\end{align} 

Recall from (\ref{eq:asthequotient}) that $X(4,8)$ is the quotient
of ${\rm Gr}(4,8)^\circ$ by the action of the torus $(K^*)^8$.
On the tropical side, where the vector $\hat{q}$ lives, this corresponds to
an additive action of $\RR^8$ on $\RR^{70}$. We take the quotient of this
additive action by setting to $0$ the eight coordinates not in $\mathcal{I}$.
Thus our choice of $62$ coordinates is compatible with tropicalization (cf. \cite{MS}),
and we obtain $\,{\rm trop}(X(4,8))\,$ as a $9$-dimensional
pointed fan in $\RR^{62}$, with coordinates indexed by $\mathcal{I}$.

Now, here is the punchline: we cannot compute solutions over $K$,
and we have no access to the Puiseux series in the argument of
${\rm val}_t$ in (\ref{eq:valpatterns}).
Instead, we carry out floating point computations over $\RR$. This will
give us enough information to identify the coordinates of~$\hat{q}$.
Indeed, from the point of view of complex geometry, the equations \eqref{eq:critpt48t} define an affine curve
$$ C \,\,\subset \,\,X(4,8) \times \CC \,\,\subset \,\,(\CC^*)^{62} \times \CC, $$
where the second factor is the line with coordinate $t$.
 The vectors $\hat{q}$ in \eqref{eq:valpatterns}, which will be called \emph{tropical critical points} in 
 Section \ref{section7},    span the rays in the partial tropical curve 
 $$ {\rm trop}(C)\,\, \subset \,\,{\rm trop}(X(4,8)) \times \mathbb{R}_{\geq 0}
 \,\, \subset \,\,\RR^{62} \times \RR_{\geq 0}.
 $$
 The solution $\hat{q} = 0$  represents classical solutions $(x(t),y(t),z(t))$ which, in the soft limit $t \rightarrow 0$, converge in $X(4,8)$. The corresponding ray in ${\rm trop}(C)$ is $ \rho_0 = \RR_{\geq  0} \cdot (0,\ldots,0,1)$.

\begin{proposition} \label{prop:regsols48}
The ray $\,\rho_0 = \RR_{\geq  0} \cdot (0,\ldots,0,1)\,$ has multiplicity $\,2\,363\,376\,$ in $\,  {\rm trop}(C)$.
\end{proposition}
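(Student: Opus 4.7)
\textbf{Proof proposal for Proposition~\ref{prop:regsols48}.}

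The plan is to identify those Puiseux-series solutions of (\ref{eq:critpt48t}) whose tropicalization lies on $\rho_0$, and count them. A solution $(\hat x(t),\hat y(t),\hat z(t))\in K^9$ contributes to $\rho_0$ precisely when ${\rm val}_t(p_{ijk\ell}(\hat x,\hat y,\hat z))=0$ for every quadruple, i.e.\ when every Pl\"ucker coordinate tends to a nonzero limit as $t\to 0$. Equivalently, the limit point lies in $X(4,8)\subset (\CC^*)^{62}$. Thus the multiplicity of $\rho_0$ equals the number of isolated solutions of the $t=0$ specialization of (\ref{eq:critpt48t}), provided each such limit solution lifts uniquely to a Puiseux series solution of multiplicity one.

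The structural observation that makes this tractable is that the $t=0$ system decouples. The Pl\"ucker coordinates $p_{ijk\ell}$ with $\ell<8$ depend only on $(x_1,y_1,z_1,x_2,y_2,z_2)$, since the variables $(x_3,y_3,z_3)$ appear only in the $8$th column of $M_{4,8}$ in (\ref{eq:48matrix}). Differentiating (\ref{eq:L48withparameter}) with respect to $(x_3,y_3,z_3)$ therefore picks up only the $\ell=8$ summands, which all carry the factor $t$; after dividing by $t$, one obtains the $t$-independent equations
\[
\sum_{(i,j,k,8)\in\mathcal I} u_{ijk8}\,\frac{\partial\log p_{ijk8}}{\partial(x_3,y_3,z_3)} \,=\, 0.
\]
Differentiating with respect to $(x_i,y_i,z_i)$ for $i=1,2$ produces both types of terms, and at $t=0$ only the $\ell<8$ terms survive, reproducing the scattering equations $\partial\mathcal L_{4,7}/\partial(x_i,y_i,z_i)=0$ for the first seven points.

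I can then count in two stages. The $(x_1,y_1,z_1,x_2,y_2,z_2)$-subsystem is the scattering system for $X(4,7)$; by Grassmann duality $X(4,7)\cong X(3,7)$ and Theorem~\ref{thm:numbersk=3}, it has exactly $|\chi(X(4,7))|=1272$ non-degenerate solutions for generic data $u_{ijk\ell}$ with $\ell<8$. Fix one such solution and let $[P_1,\ldots,P_7]$ denote the corresponding point configuration. The remaining equations in $(x_3,y_3,z_3)$ are the scattering equations of $\sum_{(i,j,k,8)}u_{ijk8}\log p_{ijk8}$ on the fiber $\pi_{4,7}^{-1}([P_1,\ldots,P_7])$, namely the complement in $\PP^3$ of the discriminantal arrangement $\mathcal B(P_1,\ldots,P_7)$. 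Since $[P_1,\ldots,P_7]$ is a generic point of $X(4,7)$, Remark~\ref{rmk:EulerBkm} together with the entry $(k,m)=(4,7)$ of Table~\ref{tab:parallel} gives this fiber Euler characteristic $1858$, hence $1858$ critical points of the inner system. Multiplying yields $1272\cdot 1858=2\,363\,376$ limit solutions.

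The main obstacle I foresee is the lifting step: justifying that the multiplicity of $\rho_0$ equals the naive count $2\,363\,376$ rather than a weighted sum. For generic data both scattering subsystems are non-degenerate, so the full Jacobian of the $t=0$ system is block-triangular and nonsingular at every limit solution; the implicit function theorem over $K=\CC\{\!\{t\}\!\}$ then produces a unique Puiseux-series lift per limit solution, each of multiplicity one. One also needs that the configuration $[P_1,\ldots,P_7]$ obtained in the outer stage is sufficiently generic for the fiber to realize the generic discriminantal arrangement $\mathcal B(4,7)$; this follows from the dominance of the projection of the $X(4,7)$-likelihood correspondence onto the parameter space, which is standard for generic $u_{ijk\ell}$.
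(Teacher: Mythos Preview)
Your argument is correct and follows essentially the same approach as the paper's proof: identify the $t=0$ limit system as decoupling into the $X(4,7)$ scattering equations in $(x_i,y_i,z_i)_{i=1,2}$ and, after dividing by $t$, the discriminantal-arrangement equations in $(x_3,y_3,z_3)$, then multiply $1272\cdot 1858$. Your write-up is in fact more careful than the paper's, which omits the block-triangular Jacobian / unique-lifting justification and the genericity of the intermediate $[P_1,\ldots,P_7]$ that you explicitly address.
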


\begin{proof}
The multiplicity of $\rho_0$ is the number of classical solutions converging in $X(4,8)$ for $t \rightarrow 0$. In this limit, the equations for $i = 1,2$ in \eqref{eq:critpt48t} are the likelihood equations for $X(4,7)$. Plugging any solution of these equations into those for $i = 3$, we find, up to division by $t$, the likelihood equations for the complement in $\CC^3$ of the discriminantal arrangement ${\cal B}(4,7)$. 
The ML degree of that arrangement complement is $1\,858$, as seen in Table \ref{tab:parallel}.
This gives the formula $\,\chi(X(4,7)) \cdot |\chi(\CC^3 \setminus {\cal B}(4,8)) |= 1\,272 \cdot 1\,858\,$
 for the multiplicity of $\rho_0$.
\end{proof}

The number $2\,363\,376$ is the number of \emph{regular solutions} in \cite{CUZ}. In the spirit of
Sections~\ref{section2} and
\ref{section5}, it is the contribution to $\chi(X(4,8))$ coming from the dense stratum in $X(4,7)$.
The nonzero tropical critical points $\hat q$ 
correspond to the \emph{singular solutions} in \cite{CUZ}. 
For $t \rightarrow 0$, these curves
move to the boundary of $X(4,8)$. 
The following result verifies a conjecture~in~\cite{CUZ}.

\begin{theoremstar} \label{conj:48}
There are $3\,150$ distinct nonzero tropical critical points 
$\hat{q}$. All of them are given by $\{0,1\}$-vectors in $\RR^{70}$ and they come in 
$7$ combinatorial types, summarized in Table~\ref{tab:combtypes48}. 
\end{theoremstar}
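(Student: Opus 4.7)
The plan is to prove this computationally by tracking the solutions to the parameterized likelihood equations \eqref{eq:critpt48t} as $t \downarrow 0$. Starting from a generic fiber at some $t = t_0 \neq 0$ obtained via monodromy with \texttt{HomotopyContinuation.jl} (as in the setup leading to Proposition \ref{prop:regsols48}), I would deform each of the $|\chi(X(4,8))| = 5\,211\,816$ classical solutions along a path approaching the origin. By Proposition \ref{prop:regsols48}, exactly $2\,363\,376$ of these paths converge to honest points of $X(4,8)$ and account for the multiplicity of the zero ray $\rho_0$. The remaining $5\,211\,816 - 2\,363\,376 = 2\,848\,440$ paths diverge, and it is precisely these singular paths whose tropical data encode the nonzero critical points $\hat{q}$ that must be enumerated.

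For each divergent path, I would sample the coordinates $(\hat{x}(t_n),\hat{y}(t_n),\hat{z}(t_n))$ at a geometric sequence $t_n \to 0$, substitute into $M_{4,8}$ from \eqref{eq:48matrix} to obtain the $70$ Plücker coordinates $\hat{p}_\bullet(t_n)$, and estimate each valuation via $\mathrm{val}_t(\hat{p}_\bullet) \approx \log|\hat{p}_\bullet(t_n)|/\log|t_n|$. After modding out by the $\mathbb{R}^8$-action coming from the torus quotient \eqref{eq:asthequotient}, so that the eight coordinates outside $\mathcal{I}$ vanish, each path yields a vector $\hat q \in \mathbb{R}^{70}$. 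A ray would be declared $\{0,1\}$-valued whenever every coordinate of this representative rounds to $0$ or $1$ within numerical tolerance; distinct rays are identified by clustering coincident vectors. Finally, I would collapse the resulting $3\,150$ rays into $S_8$-orbits under relabeling of the eight points, producing the $7$ combinatorial types summarized in Table~\ref{tab:combtypes48}.

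The main obstacles are both correctness and completeness. Floating-point valuations of Plücker coordinates that blow up or collapse can be noisy, so the sample points $t_n$ must be chosen in a regime where numerical path tracking remains well-conditioned yet the asymptotic slopes are already visible. Completeness is the more delicate issue: as already noted for the monodromy run, $218$ paths escaped certification, and an analogous loss here would undercount the rays. To guard against this, I would cross-check the total by summing, over the $7$ types, the product of ray multiplicity and $S_8$-orbit size, and verifying that the sum equals exactly $2\,848\,440$. A final sanity check is that every recovered ray must lie in $\mathrm{trop}(X(4,8)) \times \mathbb{R}_{\geq 0}$, which is controlled by the matroid stratification of Section \ref{section4}: each type should be matched to a specific stratum type of $X(4,7)$ in which $P_8$ degenerates, consistently with the stratified fibration $\pi_{4,7}$ from \eqref{eq:deletion}, so that the $\{0,1\}$ pattern of $\hat q$ records exactly which Plücker coordinates of \eqref{eq:48matrix} vanish in the limit.
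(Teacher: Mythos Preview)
Your approach is essentially the same as the paper's: solve \eqref{eq:critpt48t} numerically via monodromy, track all solutions toward $t=0$, read off valuations from log--log slopes (this is exactly Algorithm~\ref{algo:computetropicalcriticalpoints}), and cluster the resulting $\hat q$'s into symmetry orbits. The paper also uses the regression error \eqref{eq:regerr} as a filter and checks orbit consistency, just as you propose.

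One genuine slip: the symmetry group is $S_7$, not $S_8$. The soft limit \eqref{eq:L48withparameter} multiplies by $t$ only those $u_{ijk\ell}$ with $\ell=8$, so the degeneration singles out the eighth point and is only equivariant under permutations of the first seven labels. Indeed, every representative in Table~\ref{tab:combtypes48} has the index $8$ in each listed quadruple, and the orbit sizes $105,210,1260,420,630,210,315$ (summing to $3\,150$) are $S_7$-orbit sizes. If you attempted to collapse by $S_8$, the set of $3\,150$ rays would not be closed under the action, and your orbit count would be inconsistent. Your proposed sanity check $\sum_{\bullet} A_\bullet B_\bullet = 2\,848\,440$ is exactly the paper's decomposition \eqref{eq:decompchi48}, again with $A_\bullet$ being the $S_7$-orbit sizes.

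A smaller point: your final paragraph suggests matching each type to a stratum of $X(4,7)$ via the fibration $\pi_{4,7}$. The paper explicitly cautions against this (see the Remark following \eqref{eq:48Multiplicities}): unlike the $k=3$ case, not all ray multiplicities $B_\bullet$ coincide with ML degrees of matroid strata in $X(4,7)$, and the dimensions do not always match. So this sanity check is less clean here than you might hope.
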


\begin{table}[]
\centering
\begin{tabular}{c|c|c}
Type & representative  ($(i,j,k,\ell)$ for which ${\rm val}_t(p_{ijk\ell}) = 1$)                                                                    & \# configurations \\ \hline
I    & (1,4,7,8),(2,5,7,8),(3,6,7,8)                                                 & 105                      \\
II   & (1,2,3,8),(3,4,5,8),(5,6,1,8),(2,4,6,8)                                     & 210                      \\
III  & (1,2,3,8),(3,4,5,8),(5,6,7,8),(2,4,6,8)                                     & 1260                     \\
IV   & (1,4,7,8),(2,5,7,8),(3,6,7,8),(1,2,3,8),(4,5,6,8)                         & 420                      \\
V    & (1,2,3,8),(1,4,5,8),(1,6,7,8),(2,4,6,8),(2,5,7,8)                         & 630                      \\
VI   & (1,2,3,8),(1,4,5,8),(1,6,7,8),(2,4,6,8),(2,5,7,8),(3,4,7,8)             & 210                      \\
VII  & (1,2,3,8),(1,2,4,8),(1,2,5,8),(1,2,6,8),(1,2,7,8),(3,4,5,8),(3,6,7,8) & 315                     
\end{tabular}
\caption{Combinatorial types contributing singular solutions to \eqref{eq:critpt48t}. The second column shows a representative for each type indicating which Pl\"ucker coordinates have valuation 1. The third column shows the cardinality of the orbit of the action of $S_7$ on the first 7 indices. }
\label{tab:combtypes48}
\end{table}

We here assume that the $u_{ijk\ell}$ are generic complex numbers.
The result is derived by  computations with {\tt HomotopyContinuation.jl}.
The $t$-adic order of each numerical solution was
found using 
 Algorithm \ref{algo:computetropicalcriticalpoints}.
 The nonzero tropical critical points $\hat{q}$ span the rays in ${\rm trop}(C)$, other than $\rho_0$, whose generator has a positive last coordinate. The ML degree equals
 \begin{equation} \label{eq:decompchi48}
|\chi(X(4,8))| \,\,= \,\,\, 2\,363\,376  \,+\, A_{\text I} \cdot B_{\text I} \,+\,  A_{\text{II}} \cdot B_{\text{II}}
\,+\,\cdots \,+\, A_{\text{VII}} \cdot B_{\text{VII}},
\end{equation}
where $A_{\text I} = 105$ is the number of configurations of type I, $B_{\text I}$ is the multiplicity of each
$A_{\text I}$ ray in ${\rm trop}(C)$, and likewise for the other types. 
In other words, $B_{\text I}$ is the number of Puiseux series solutions to  \eqref{eq:critpt48t} 
whose $t$-adic valuation is the representative of type I in Table \ref{tab:combtypes48}. 

We use numerical computation to obtain the multiplicities $B_{\text I}, \ldots, B_{\text{VII}}$.
This is done without any prior knowledge about ${\rm trop}(X(4,8))$ or the fibration $\pi_{4,8}$.
 Approximate solutions of \eqref{eq:critpt48} 
 are tracked numerically along the soft limit degeneration to  
 learn the tropical critical points \eqref{eq:valpatterns}. 
 We explain the details of this computation in a more general context in Section~\ref{section8}.
 
  We filter the obtained list of candidate critical points by recording the regression error in \eqref{eq:regerr} and by checking that the solutions come in $S_7$-orbits. This gives a total of 
  $3\,151$ successfully found vectors $\hat{q}$ in $\QQ^{70}$. One of them is
 $0 \in \RR^{70}$. This establishes Theorem*~\ref{conj:48}.
 To learn the multiplicities $B_\bullet$ in (\ref{eq:decompchi48}),
we record, for each tropical solution $\hat{q}$ of type  $\bullet$,
   the number of classical solutions that were found to have valuation $\hat{q}$. 
    The multiplicities are 
\begin{equation}
\label{eq:48Multiplicities} 
\!\! \begin{matrix} 
B_{\text I} = 5\,680, \! & B_{\text{II}} = 1\,704,  \! & B_{\text{III}} = 988, \! & B_{\text{IV}} = 832, \! &
B_{\text V} = 308,\! & B_{\text{VI}} = 240,\! & B_{\text{VII}} = 72. 
\end{matrix}
\end{equation}
Plugging these values into \eqref{eq:decompchi48} leads to Theorem* \ref{conj:chi48}. 
Additional strong support arises from the fact that
the number $5\,211\,816$ is also the number of approximate solutions found
in a stand-alone run of \texttt{monodromy\_solve}, although not all of them could be certified via
\cite{BRT}.

\begin{remark}
The decomposition \eqref{eq:decompchi48} is strongly related to the sum in
Lemma \ref{lemma:stratifiedfibration}. For instance,
the formula $\chi(X(3,7)) = 26 \cdot 42 + 15 \cdot 12$ from the proof of Theorem \ref{thm:numbersk=3}
partitions the $1\,272$ critical points of the log-likelihood function into 
$1\,092$ solutions that converge in $X(3,7)$ and 180 solutions that move to the boundary 
in the soft limit. These boundary solutions escape from 
the open variety $X(3,7)$ in $15$ groups of $12$. 
Here $A_{\text I} =  15$ is a combinatorial number, and $B_{\text I} = 12$ is the ML degree of a stratum in $X(3,6)$. 
This is isomorphic to the codimension 3 matroid stratum in $X(3,7)$ with ML degree 12 in Theorem \ref{thm:matroid_ml}. A similar interpretation holds for $X(3,8)$ and $X(3,9)$. In the case of $X(4,8)$, 
however, not all ray multiplicities $B_\bullet$ are equal to the ML degree of a corresponding stratum in $X(4,7)$. A notable difference to the $k = 3$ case is that the matroid stratum 
of type I, seen for $k=4,m=8$
in Table \ref{tab:combtypes48}, differs in dimension from its corresponding stratum in $X(4,7)$.
\end{remark}

\section{Statistical models and their tropicalization}
\label{section7}

We consider maximum likelihood estimation (MLE) for discrete statistical models \cite{HS, PS}.
Our  conventions and notation will be as in \cite{ST}.
The given model is a
$d$-dimensional subvariety $X$ of the projective space $\PP^n$, which is
assumed to intersect the simplex $\Delta_n \subset \RR\PP^n$ of positive points.
We seek to compute the critical points on $X$ of the log-likelihood function
$$
{\cal L}_u \,=\,
u_0 \cdot {\rm log}(p_0) \,+ \,
u_1 \cdot {\rm log}(p_1) \,+ \,\cdots + \,u_n \cdot {\rm log}(p_n) \,- \,
(u_0{+}u_1{+}\cdots{+}u_n) \cdot {\rm log}(p_0{+}p_1{+} \cdots {+} p_n).
$$
For any positive constants $u_0,u_1,\dots,u_n$, representing  data
in statistics, this is a well-defined function on $\Delta_n$.
The aim of likelihood inference  is to maximize ${\cal L}_u$
over all points $p$ in the model $X \cap \Delta_n$.
The number of all complex critical points,
for generic $u$, is the ML degree
of the model $X$.  If $X$ is smooth then
this equals the signed Euler characteristic of the open variety
$X^\circ$, which is the complement of the
divisor in $X$ defined by  $p_0 p_1 \cdots p_n (\sum_{i=0}^n p_i) = 0$.

Likelihood degenerations  were first introduced
in the setting of algebraic statistics by Gross and Rodriguez in \cite{GR},
who studied the behavior of the MLE when some of the $u_i$ approach zero. 
They distinguish between  model zeros, structural zeros and 
sampling zeros.  These statistical concepts can serve as a guide
for interpreting likelihood degenerations.

We draw samples independently from some unknown
distribution that is in $X \cap \Delta_n$. The probabilities in the
following definitions refer to that sampling distribution.
The data are summarized in a vector $u \in \NN^{n+1}$,
where $u_i$ denotes the number of observations found to be in state $i$.
Suppose that state $i$ was never observed in our sample. The entry
$u_i = 0$ is called:
\begin{itemize}
\item a {\it structural zero} if the probability of it being zero is equal to one; \vspace{-0.2cm}
\item a {\it sampling zero} if the probability of it being zero is less than  one; \vspace{-0.2cm}
\item a {\it model zero} if the maximizer of ${\cal L}_u$ over $X \cap \Delta_n$ 
is a critical point of  the restriction of ${\cal L}_u$ to the hyperplane section $X \cap \{p_i=0\}$.
\end{itemize}

Structural zeros may mean that the wrong model was chosen,
so we exclude this possibility. What remains is a consideration of
sampling zeros and model zeros.
These statistical concepts led Huh and Sturmfels to propose the following formula 
in \cite[Conjecture~3.19]{HS}:
\begin{equation}
\label{eq:huh}
 {\rm MLdegree}(X) \,\, = \,\,
 {\rm MLdegree}(X \cap \{ p_j = 0 \} )\, + \,
 {\rm MLdegree}(X|_{u_j = 0}). 
 \end{equation}
 The last summand counts  critical points of ${\cal L}_u$ on 
 $X$ when $u_j = 0$ and~the other  $u_i$ are~generic.
 The identity (\ref{eq:huh}) holds under suitable
 smoothness and transversality assumptions.
 They ensure that the ML degrees are signed
 Euler characteristics of very affine varieties,
  obtained by removing the arrangement
 $\mathcal{H} = \{ p_0 p_1 \cdots p_n (\sum_{i=0}^n p_i) = 0\}$
 of $n+2$ hyperplanes  from $\PP^n$.
For the last summand we remove only $n+1$
hyperplanes. The Euler characteristic is additive
relative to the additional hyperplane $\{p_j = 0\}$.
The sum becomes a minus for the signed Euler characteristic,
as the dimensions differ by one, so
 the identity (\ref{eq:huh}) follows.

Familiar combinatorics arises when $X$ is a linear space. In this case,
 $X^\circ$ is the complement of
 $n+1$ hyperplanes in affine $d$-space. 
The number of bounded regions can be computed by
deletion-restriction. This is precisely the formula in~(\ref{eq:huh}).
Moreover, all critical points are real, and there is one critical point per bounded region.
An example from \cite{ST} is the space $X = X(2,m)$
of $m$ points on the line $\PP^1$, modulo projective transformations.
Here $d = m-3$, $n = m(m-3)/2$, and the ML degree equals $(m-3)!$.
The formula (\ref{eq:huh}) is essentially that for
soft limits in \cite{CEGM, CUZ}. 
We count solutions in (\ref{eq:huh}) as
singular solutions plus regular solutions.

In this section, we introduce a vast generalization of
soft limits, namely {\em tropical degenerations}.
We examine MLE for discrete statistical models
through the lens of tropical geometry. In what follows,
the real numbers $\RR$ are replaced by the real Puiseux series $R = \RR\{\! \{t \} \! \}$.
This is a real closed field, and it comes with the $t$-adic valuation.
The uniformizer $t$ is positive and infinitesimal. A scalar $u$ in $R$
can be viewed as the germ of a function $u(t)$ near $t \rightarrow 0$.

We are now given $u_0,\ldots,u_n \in R$, with
valuations $w_i = {\rm val}_t(u_i)$. We call
 $w = (w_0,\ldots,w_n)$  the {\em tropical data vector}.
Each  critical point $\hat p$ of ${\cal L}_u$ has its coordinates $\hat p_i$ in 
the algebraic closure  $K=\CC\{\!\{ t \}\!\}$ of the ordered field $R$.
We set $\hat q_i = {\rm val}_t(\hat p_i)$, and we refer to
$\hat q = (\hat q_0,\ldots,\hat q_n)$ as a {\em tropical critical point}.
Given any model $X$, we would like to describe the multivalued map
that takes a tropical data vector $w$ to the set of its tropical critical points~$\hat q$.

The following theorem accomplishes this goal for the class of
linear models \cite[\S 1.2]{PS}.  We augment the homogeneous linear forms
defining $X$  by the equation $p_0 + p_1 + \cdots + p_n = 1$,
and we identify $X$ with the resulting $d$-dimensional
affine-linear subspace in $\RR^{n+1}$.
We write $X^\perp$ for the linear subspace of
$\RR^{n+1}$ that consists of all vectors perpendicular to $X$,
with respect to the usual dot product. Thus $X^\perp$ is a 
vector space of dimension $n-d+1$ in~$\RR^{n+1}$.

The tropical affine space ${\rm trop}(X)$ 
is a pointed cone of dimension $d$  in $\RR^{n+1}$. Combinatorially,
this is the Bergman fan  \cite[\S 4.2]{MS}
of the rank $d+1$ matroid on $n+2$ elements
defined by $X$. Here the matroid is associated with the hyperplane
arrangement $X \backslash X^\circ$. The tropical linear
space ${\rm trop}(X^\perp)$ has dimension $n-d+1$.
It is a fan with $1$-dimensional lineality space spanned by $(1,1,\ldots,1)$.
Combinatorially, it is the Bergman fan
of the rank $n-d+1$ matroid on $n+1$ elements defined by $X^\perp$.
Here  the matroid of $X^\perp$ is the dual of a one-element contraction of the matroid of $X$.
The contracted element corresponds to the hyperplane at infinity, namely
$\{p_0+p_1 + \cdots + p_n = 0\}$. It is very important to distinguish this element.

\begin{theorem} \label{thm:tropMLE}
	If the tropical data vector $w$ is sufficiently generic then 
	there are exactly ${\rm MLdegree}(X)$ many distinct tropical critical points. They are given by the intersection
	\begin{equation}
	\label{eq:troptrop}
	\hat q \,\,\in \,\, {\rm trop}(X) \,\, \cap \,\, (w - {\rm trop}(X^\perp)). 
	\end{equation}
\end{theorem}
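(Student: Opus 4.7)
The plan is to tropicalize the classical critical-point equations directly and then verify that the resulting tropical intersection accounts for all critical points with no overcounting. Viewing $X$ as the affine-linear subspace of $\RR^{n+1}$ cut out by its linear equations together with $p_0+\cdots+p_n=1$, a point $\hat p \in X \cap (K^*)^{n+1}$ is a critical point of $\mathcal{L}_u$ if and only if the gradient $(u_0/\hat p_0,\ldots,u_n/\hat p_n)$ annihilates the tangent space to $X$, that is, lies in the constant linear subspace $X^\perp \subset K^{n+1}$. For generic $u$ this system has exactly $\operatorname{MLdegree}(X)$ solutions, by definition of the ML degree. Since $X$ and $X^\perp$ are constant linear subspaces defined over $\CC$, their tropicalizations are the corresponding Bergman fans and do not depend on $t$. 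Applying $\operatorname{val}_t$ coordinatewise to the two defining conditions gives $\hat q := \operatorname{val}_t(\hat p) \in \operatorname{trop}(X)$ and $w - \hat q \in \operatorname{trop}(X^\perp)$, establishing the containment ``$\subseteq$'' in (\ref{eq:troptrop}).

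For the reverse direction, I would first record the dimensional equality $\dim\operatorname{trop}(X) + \dim\operatorname{trop}(X^\perp) = d + (n-d+1) = n+1$, which matches the ambient dimension, so for generic $w$ the intersection in (\ref{eq:troptrop}) is zero-dimensional. To promote this to an equality of cardinalities, I would appeal to the Fundamental Theorem of Tropical Geometry applied to the zero-dimensional critical scheme $V \subset (K^*)^{n+1}$ cut out by $\hat p \in X$ and $(u_0/\hat p_0,\ldots,u_n/\hat p_n) \in X^\perp$: the set-theoretic tropicalization of $V$ coincides with the intersection in (\ref{eq:troptrop}), and the stable tropical multiplicities sum to $|V| = \operatorname{MLdegree}(X)$. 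It then suffices to prove that every intersection point carries multiplicity one for generic $w$, after which the two counts agree.

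The main obstacle is this multiplicity-one claim. One route is combinatorial: the paper already notes that the matroid of $X^\perp$ is the dual of a one-element contraction of the matroid $M$ of $X$, and transversality of the corresponding Bergman fans under a generic translate is a known feature of tropical linear spaces through work of Ardila--Klivans and Speyer. Coupled with the matroidal intersection-number formula, the total count becomes the beta invariant $\beta(M)$, which by Zaslavsky's theorem (Section~\ref{section3}) and Varchenko's theorem (Section~\ref{section2}) equals the number of bounded regions of the defining hyperplane arrangement, hence $\operatorname{MLdegree}(X)$. A more hands-on alternative is a Newton--Puiseux lift: given $\hat q$ in the intersection together with the initial data $\operatorname{in}_w(u)$, one constructs a Puiseux series lift of $\hat q$ order by order, with each step uniquely solvable because the linearized Jacobian is generically invertible; this simultaneously exhibits uniqueness of the lift and the multiplicity-one behaviour.
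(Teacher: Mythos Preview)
Your overall architecture matches the paper's: reformulate the critical-point conditions as the incidence $\hat p \in X$ and $u \star \hat p^{-1} \in X^\perp$ (equivalently, the paper cites \cite[Proposition~1.19]{HS} for $\hat p \in X \cap (u^{-1}\star X^\perp)^{-1}$), tropicalize to obtain the intersection \eqref{eq:troptrop}, and then argue multiplicity one at each point for generic $w$.  The paper handles the passage from ``tropicalization of the intersection'' to ``intersection of tropicalizations'' by invoking the stable-intersection theorem \cite[Theorem~3.6.1]{MS}; you establish only the containment $\subseteq$ directly and then try to recover equality a posteriori from a cardinality match, which is fine once multiplicity one is known, but it means all the weight falls on that step.

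The substantive difference is how multiplicity one is proved.  The paper does this by a self-contained lemma (Lemma~\ref{lem:bergman}): the multiplicity at a transverse intersection point is the absolute value of the determinant of the $(n{+}1)\times(n{+}1)$ matrix whose columns are the indicator vectors of the flats in a flag of $X$ and a flag of $X^\perp$, together with the all-ones vector; a short row-reduction argument, using that the matroid of $X^\perp$ is $(M/(n{+}1))^\ast$, shows this determinant is always $0$ or $\pm 1$.  Your route~1 instead appeals to general transversality results for Bergman fans and then to a beta-invariant count.  Be careful here: the beta-invariant step is circular for your purposes, since it only recovers the \emph{total} intersection number (already known to be ${\rm MLdegree}(X)$) and says nothing about individual multiplicities.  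The transversality claim you attribute to Ardila--Klivans and Speyer would need a precise statement covering the pair $({\rm trop}(X),\,{\rm trop}(X^\perp))$ for this specific dual-of-contraction relationship; absent that, the paper's explicit determinant computation is the cleaner and more robust argument, and it even works at the level of nonrealizable matroids.  Your route~2 (Newton--Puiseux lifting) is plausible but, as written, is only a sketch; making the ``linearized Jacobian is generically invertible'' step precise essentially reproduces the same determinant condition that Lemma~\ref{lem:bergman} analyzes directly.
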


We call the subspace $X$  {\em general} if
both of the above matroids  are uniform.
In that case we have
${\rm MLdegree}(X) = \binom{n}{d}$; see \cite[Example 4]{ST}.
The matroid of $X$ is the uniform matroid $U_{d+1,n+2}$,
and the matroid of $X^\perp$ is the uniform matroid $U_{n-d+1,n+1}$.
We abbreviate $e_{n+1} = - e_0 - e_1 - \cdots - e_n$, the negated
sum of all unit vectors in $\RR^{n+1}$.
The tropical affine space ${\rm trop}(X)$ is the union of all
$\binom{n+2}{d}$ cones $ {\rm pos}(e_i: i \in I)$ where $I$
runs over $d$-element subsets of $\{0,1,\ldots,n,n+1\}$.
The tropical linear space ${\rm trop}(X^\perp)$ is the union of
all $\binom{n+1}{n-d}$ cones $\RR e_{n+1} + {\rm pos}(e_j: j \in J)$,
where $J$ runs over  $(n-d)$-element subsets of $\{0,1,\ldots,n\}$.~Let $w \in \RR^{n+1}$ and suppose 
$w_0$ is its smallest coordinate. Then
$w+ w_0 e_{n+1}$ is a nonnegative~vector with first coordinate $0$.
Replacing $w$ by $w+w_0 e_{n+1}$ does not change tropical critical points.

\begin{corollary} \label{cor:uniform}
	If $X$ is general then (\ref{eq:troptrop}) consists of the
	$\binom{n}{d}$ points  $\, \hat q \, =  \sum_{i \in I} (w_i - w_0) \, e_i\,$,
	where $I$ runs over all $d$-element subsets of $\{1,2,\ldots,n\}$.
	These are the tropical critical points.
\end{corollary}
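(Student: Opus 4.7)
The approach is to apply Theorem \ref{thm:tropMLE} and then describe the intersection (\ref{eq:troptrop}) completely explicitly, using the cone decompositions of ${\rm trop}(X)$ and ${\rm trop}(X^\perp)$ spelled out in the paragraph preceding the statement. Since we know from the text that ${\rm MLdegree}(X) = \binom{n}{d}$ in the general (uniform-matroid) case, it suffices to exhibit $\binom{n}{d}$ distinct points in this intersection; the count provided by Theorem \ref{thm:tropMLE} will then force these to be all of them.

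Following the normalization observation immediately before the corollary, I would first replace $w$ by $w + w_0 e_{n+1}$, a shift by an element of the lineality space of ${\rm trop}(X^\perp)$ which does not alter the set of tropical critical points. After this shift we may assume $w_0 = 0$, so $w_i \geq 0$ for all $i$ and (for generic $w$) $w_i > 0$ for $i \geq 1$. The candidate points then take the clean form $\hat q_I = \sum_{i \in I} w_i e_i$, indexed by $d$-subsets $I \subseteq \{1,\ldots,n\}$. Membership of $\hat q_I$ in ${\rm trop}(X)$ is immediate, because $\hat q_I$ is a nonnegative combination of the rays $\{e_i : i \in I\}$ and $I$ is a valid $d$-subset of $\{0,1,\ldots,n+1\}$ in the Bergman fan of $U_{d+1,n+2}$. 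Setting $J := \{1,\ldots,n\} \setminus I$, a direct coordinate computation gives
\[
w - \hat q_I \,\,=\,\, \sum_{j \in J} w_j\, e_j,
\]
which lies in ${\rm pos}(e_j : j \in J) \subseteq \RR e_{n+1} + {\rm pos}(e_j : j \in J) \subseteq {\rm trop}(X^\perp)$, with the $e_{n+1}$-coefficient equal to zero; note that $|J| = n-d$ and $J \subseteq \{0,\ldots,n\}$, so the cone is one of the maximal cones described in the text.

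Distinctness of the $\hat q_I$ is immediate from their supports: for generic $w$ the set $I$ is recovered as $\{i \geq 1 : (\hat q_I)_i > 0\}$. Hence the $\binom{n}{d}$ vectors $\hat q_I$ are distinct members of ${\rm trop}(X) \cap (w - {\rm trop}(X^\perp))$, and Theorem \ref{thm:tropMLE} guarantees that this intersection has cardinality exactly ${\rm MLdegree}(X) = \binom{n}{d}$. Undoing the initial shift converts $\sum_{i \in I} w_i e_i$ back into the formula $\sum_{i \in I} (w_i - w_0) e_i$ stated in the corollary. The only care needed anywhere is in keeping track of the lineality vector $e_{n+1} = -\sum_{i=0}^n e_i$ when verifying membership in ${\rm trop}(X^\perp)$; the pre-shift $w \mapsto w + w_0 e_{n+1}$ is precisely what clears the index-$0$ coordinate and makes the lineality play no role in the final check, so I do not expect any genuine obstacle beyond this small piece of bookkeeping.
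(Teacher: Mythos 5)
Your proposal is correct and follows essentially the same route as the paper: exhibit the $\binom{n}{d}$ candidate points, verify membership of $\hat q$ in ${\rm trop}(X)$ and of $w-\hat q$ in ${\rm trop}(X^\perp)$ via the explicit cone decompositions, and invoke Theorem \ref{thm:tropMLE} to conclude these exhaust the intersection. The only cosmetic difference is that you normalize $w_0=0$ by a lineality shift before computing, whereas the paper carries the term $-w_0 e_{n+1}$ explicitly in the identity $w-\hat q=\sum_{j\notin I}(w_j-w_0)e_j - w_0 e_{n+1}$; both verifications are equivalent.
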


\begin{proof}
	By construction, the vector $\hat q$ lies in ${\rm trop}(X)$.
	We also have
	$$ w-\hat q \,\,=\,\, w - \sum_{i \in I} w_i e_i + \sum_{i \in I} w_0 e_i \, \,=\,\,
	 \sum_{j \not\in I} (w_j - w_0) e_j  - w_0 e_{n+1}.
	$$
	This vector is visibly in ${\rm trop}(X^\perp)$.
	We have thus constructed $\binom{n}{d}$ distinct vectors in the intersection (\ref{eq:troptrop}).
	The corollary hence follows from Theorem \ref{thm:tropMLE}, applied to general $X$.
\end{proof}

\begin{example}[$n=3,d=2$]
	We consider the $2$-parameter linear model $X$ for
	the state space $\{{\tt A}, {\tt C}, {\tt G}, {\tt T}\}$ discussed in
	\cite[Example 1.1]{PS}.  This model is defined by one homogeneous
	linear constraint $\, c_{\tt A}  p_{\tt A} + 
	c_{\tt C}  p_{\tt C} + c_{\tt G}  p_{\tt G} + c_{\tt T}  p_{\tt T}  = 0$.
	Its coefficients
	$c_i$ are nonzero real numbers. Consider the data vector
	$u = (t^{w_{\tt A}},t^{w_{\tt C}},t^{w_{\tt G}},t^{w_{\tt T}})$.
	The four exponents are nonnegative integers, and we 
	assume that $w_{\tt A}$ is the smallest among them.
	The log-likelihood function
	has three critical points $\hat p = (\hat p_{\tt A},  \hat p_{\tt C}, \hat p_{\tt G}, \hat p_{\tt T})$,
	one for each bounded polygon.
	These are functions in $t$,
	and we seek their behavior for $t \rightarrow 0$.
	This is given by the three tropical critical points:
	$$ \hat q \,\, = \,\, 
	(0, w_{\tt C}-w_{\tt A},w_{\tt G}-w_{\tt A},0), \,\,
	(0, w_{\tt C}-w_{\tt A}, 0 ,w_{\tt T}-w_{\tt A}) \,\,\,{\rm or} \,\,\,
	(0, 0, w_{\tt G}-w_{\tt A},w_{\tt T}-w_{\tt A}).
	$$
	We change the model by setting $c_{\tt A} = 0$, so $X$ is no longer general.
	The number~of bounded regions drops from $3$ to $2$.  The matroid of $X$ changes,
	and so does ${\rm trop}(X)$. We now find
	$
	\hat q \, = \, (0, w_{\tt C}-w_{\tt A},w_{\tt C}-w_{\tt A},w_{\tt G}-w_{\tt A}) \, {\rm or} \,
	(0, w_{\tt C}-w_{\tt A},w_{\tt G}-w_{\tt A},w_{\tt C}-w_{\tt A})$
	provided $w_{\tt C} < {\rm min}(w_{\tt G},\! w_{\tt T})$.
\end{example}

\begin{proof}[Proof of Theorem \ref{thm:tropMLE}]
	We use the formulation of the likelihood correspondence given in
	\cite[Proposition 1.19]{HS}.
	This states, in our notation, that the critical points $\hat p$ are 
	the elements of
	\begin{equation}
	\label{eq:intersection}
	X \,\,\cap \,\,(u^{-1} \star X^\perp)^{-1} .
	\end{equation}
	Here, $\star $ is the Hadamard product, and $u^{-1}$ is the
	coordinatewise reciprocal of the vector $u$. The intersection
	in (\ref{eq:intersection}) commutes with tropicalization,
	provided $w = {\rm val}_t(u)$ is generic:
	\begin{equation}
	\label{eq:stableintersection}
	{\rm trop}(X \,\cap \,(u^{-1} \star X^\perp)^{-1} ) \,\,= \,\,
	{\rm trop}(X) \,\cap \,\, - {\rm trop}(u^{-1} \star X^\perp) \,\, = \,\,
	{\rm trop}(X) \,\cap \, ( w - {\rm trop}(X^\perp)). 
	\end{equation}
	Indeed, the left expression is contained in the middle expression,
	and they are equal in the sense of stable intersection.
	This is the content of \cite[Theorem 3.6.1]{MS}. 
	 In (\ref{eq:stableintersection}) we intersect
	polyhedral spaces of dimensions $d$ and $n+1-d$ in $\RR^{n+1}$.
	The second equation follows from \cite[Proposition 5.5.11]{MS}.
	Since  $w$ is generic, the intersection
	is transverse at any intersection point and each intersection point is isolated.
	Lemma~\ref{lem:bergman} below shows that the multiplicity of every tropical intersection is $1$, even in the more general case of a nonrealizable matroid.
\end{proof}

	Fix a matroid $X$ of rank $d+1$ on the elements $\{0,1,\ldots, n+1\}$.
	Let $X^\perp = (X/(n+1))^\ast$ be  the dual of the contraction of $X$ by the element~$n+1$.
	Furthermore, let $\mathcal{F}\coloneqq \{F_1 \subsetneq \dots \subsetneq F_d\}$ and $\mathcal{F}^\perp\coloneqq \{F_1^\perp \subsetneq \dots \subsetneq F_{n-d}^\perp\}$ be flags of flats of $X$ and $X^\perp$, respectively with ${\rm rank}(F_i)=i$ and ${\rm rank}(F_j^\perp)=j$ for all $i,j$.
	Since X has rank $d+1$, we can assume $F_d\subseteq \{0, \ldots, n\}$.

\begin{lemma}\label{lem:bergman}
Each intersection in \eqref{eq:stableintersection} has multiplicity $1$. It is the
 signed determinant of an $(n+1)\times (n+1)$ matrix
		whose columns are indicator vectors of flats in flags as above:
	\[
	M_{\mathcal{F},\mathcal{F}^\perp} \,\,\coloneqq\,\,
	\begin{pmatrix}
	\vline  & \dots & \vline & \vline  & \dots & \vline & \vline \\
	\, e_{F_1}  & \dots & e_{F_d} & e_{F_1^\perp}  & \dots & e_{F_{n-d}^\perp} & e_{\{0,\ldots,n\}}\\
	\vline   & \dots & \vline & \vline   & \dots & \vline & \vline 
	\end{pmatrix}.
	\]
Such a matrix $M_{\mathcal{F},\mathcal{F}^\perp}$ has determinant $0$ or~$\pm 1$.
Moreover, if $M_{\mathcal{F},\mathcal{F}^\perp}$ 
  is invertible, there exist complementary bases of the matroids $X/(n+1)$ and $X^\perp$ generating the flags $\mathcal F$ and $\mathcal{F}^\perp$.
\end{lemma}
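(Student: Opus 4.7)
The plan has three stages.

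\textbf{Stage 1: multiplicity as a determinant.}  I first identify the intersection multiplicity with $|\det M_{\mathcal{F},\mathcal{F}^\perp}|$. By the standard formula for stable intersections of rational polyhedral fans \cite[\S 3.6]{MS}, the multiplicity at a transverse point equals the absolute value of the determinant of a matrix whose columns are primitive integer generators of the two maximal cones meeting at that point, together with their common lineality directions. In the fine polyhedral structure of the Bergman fan of a matroid, maximal cones are indexed by complete flags of proper flats and are spanned by the indicator vectors $e_F$ of those flats \cite[Chapter 4]{MS}; the lineality of $\mathrm{trop}(X^\perp)$ is $\RR\cdot e_{\{0,\ldots,n\}}$, while $\mathrm{trop}(X)$ is pointed under our normalization because $e_{n+1} = -\sum_{i=0}^n e_i$. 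Assembling these generators produces precisely the matrix $M_{\mathcal{F},\mathcal{F}^\perp}$.

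\textbf{Stage 2: unimodularity.}  Next I would establish $\det M_{\mathcal{F},\mathcal{F}^\perp}\in\{-1,0,+1\}$. Perform the determinant-preserving column operations $e_{F_i}\mapsto e_{F_i\setminus F_{i-1}}$ and $e_{F_j^\perp}\mapsto e_{F_j^\perp\setminus F_{j-1}^\perp}$ with $F_0=F_0^\perp=\emptyset$. In the resulting $0/1$ matrix, each row $e$ contains at most three ones: at most one in the $X$-flag block (in the column indexed by the unique $i$ with $e\in A_i:=F_i\setminus F_{i-1}$, when $e\in F_d$), at most one in the $X^\perp$-flag block by the same rule with $B_j:=F_j^\perp\setminus F_{j-1}^\perp$, and always one in the last column. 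By Leibniz expansion, every nonzero summand is $\pm 1$ and corresponds to a system of distinct representatives assigning distinct rows to the blocks $A_1,\ldots,A_d,B_1,\ldots,B_{n-d},\{\star\}$, with one leftover row going to the final column $\star$. I claim this signed SDR count lies in $\{-1,0,+1\}$.

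\textbf{Stage 3: complementary bases.}  Given that $M_{\mathcal{F},\mathcal{F}^\perp}$ is invertible, fix any surviving SDR: it yields distinct elements $b_i\in A_i$, $b_j^\perp\in B_j$, and one leftover element $b_\star$. Set $B:=\{b_1,\ldots,b_d\}$ and $B^\perp:=\{b_1^\perp,\ldots,b_{n-d}^\perp,b_\star\}$. Since $b_i\in F_i\setminus F_{i-1}$ and the rank of $F_i$ in $X$ increases by one at each step, the matroid axioms force $\mathrm{cl}_X(\{b_1,\ldots,b_i\})=F_i$ for all $i$; so $B$ is independent of rank $d$ in $X$ and its image in $X/(n+1)$ is a basis generating $\mathcal{F}$. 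Symmetrically $B^\perp$ is a basis of $X^\perp$ generating $\mathcal{F}^\perp$, with $b_\star$ supplying the final element needed to extend from rank $n-d$ up to the full rank $n+1-d$. The sets $B$ and $B^\perp$ are complementary subsets of $\{0,\ldots,n\}$ by construction.

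\textbf{Main obstacle.}  The crux is the signed-count claim in Stage 2. A small example where $X$ has a parallel pair (so that $|A_1|=2$) shows that two SDRs can coexist with opposite signs, producing $\det=0$; hence one cannot simply bound the number of SDRs. The cleanest argument is that any two SDRs differ by permutations within rows of identical column support, and the sign of the resulting Leibniz term exactly matches the sign of that permutation, so the signed sum collapses to $0$ or $\pm 1$. Equivalently, this is the well-known tropical fact that Bergman fans of matroids are multiplicity-free under stable intersection against their duals, which can be deduced from the unimodularity of matroid fans as developed in \cite[Chapter 4]{MS}.
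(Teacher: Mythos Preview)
Your Stage 1 is fine, and Stage 3 is essentially correct (once you observe, via matroid duality, that $B^\perp=\{0,\ldots,n\}\setminus B$ is automatically a basis of $X^\perp$ because $B$ is a basis of $X/(n+1)$; this in turn forces $b_\star\notin F_{n-d}^\perp$, which you asserted without justification).

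The genuine gap is in Stage 2. Your claimed cancellation mechanism---``any two SDRs differ by permutations within rows of identical column support''---is simply false. Take $n+1=4$, $d=2$, with $A_1=\{0,1\}$, $A_2=\{2\}$, $B_1=\{1,3\}$. After your column operations the matrix is
\[
\begin{pmatrix} 1&0&0&1\\ 1&0&1&1\\ 0&1&0&1\\ 0&0&1&1 \end{pmatrix},
\]
whose rows are pairwise distinct. There are three SDRs, contributing signs $-1,+1,-1$, summing to $-1$. The two SDRs differing in the $B_1$ and $\star$ columns swap rows $1$ and $3$, which have \emph{different} support. So your pairing argument does not apply, and your fallback---appealing to ``unimodularity of matroid fans'' in \cite{MS}---is a citation of the desired conclusion rather than a proof of it.

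The paper's argument avoids this by working with the \emph{original} flag columns (not the difference columns) and using row operations instead. After relabeling so that $\{0,\ldots,d-1,n+1\}$ is a basis of $X$, the left $d$ columns become upper triangular with ones on the diagonal in the first $d$ rows. One then subtracts, from each row $i\geq d$, the unique row $i'<d$ with the same pattern in the left block; this zeroes out the lower-left block while keeping all entries in $\{0,1\}$. Finally, since $\{d,\ldots,n\}$ is a basis of $X^\perp$ (being complementary to the basis $\{0,\ldots,d-1\}$ of $X/(n+1)$), the lower-right block can be row-permuted to upper triangular with ones on the diagonal, giving determinant $\pm 1$. This argument never needs to control a signed SDR count; it directly exhibits the triangular structure.
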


\begin{proof}
We proceed along the following four steps.
\begin{enumerate}
\item We can assume that $\{0,\dots,d-1,n+1\}$ is a basis of $X$, and
		\begin{equation}\label{eq:MF}
		M_{\mathcal{F},\mathcal{F}^\perp} \,\,= \,\,
\begin{small}		\begin{pmatrix}
		\begin{matrix}
		1 & 1 & 1 & 1\\
		0 & 1 & 1 & 1\\
		0 & 0  & \ddots & 1\\
		0 & 0  & 0 & 1
		\end{matrix}
		& \rvline & & \ast & \\
		\hline
		& \rvline \\
		\ast & \rvline & &
		\ast \\
		&\rvline  &
		\end{pmatrix}, \end{small}
		\end{equation}
		where the left block has $d$ columns and the symbols `$\ast$' represent arbitrary $\{0,1\}$-entries.
The definition of flag ensures that within each column block, a $1$-entry is followed by only $1$'s in the same row. Therefore, rows in each column block are uniquely determined by their number of 1-entries.
\item Let $I_j$ be the set of row indices corresponding to rows having precisely $j$ $1$-entries in the left block. For each row index $i$, let $\gamma(i)$ be the smallest column index in the right block such that the entry $(i, \gamma(i))$ is $1$. By permuting rows of~$M_{\mathcal{F},\mathcal{F}^\perp}$ we can ensure that 
all rows indexed by $I_j$ are sorted by increasing number of~$1$'s in the right block. Since a $1$-entry in the right block is followed by~$1$'s in the same row, we have that
\begin{equation} \label{eq:gamma}
\text{if $i, i' \in I_j$ are such that } i' < i,~ \text{then } \gamma( i ) \leq \gamma( i' ).
\end{equation}
If equality holds for some $i \in I_j \setminus \{ i' \}$, then two rows are equal and $M_{\mathcal{F},\mathcal{F}^\perp}$ has determinant~$0$, in which case we are done. Hence, in what follows we assume that the last inequality in \eqref{eq:gamma} is strict for all distinct $i, i' \in I_j$. 
\item We perform the following elementary row operations. Each row $r_i$ indexed by $i > d$ is replaced by $r_i - r_{i'}$, where $i \in I_{i'}$. After this operation, the lower left block in $M_{\mathcal{F},\mathcal{F}^\perp}$ is zero, all entries are still 0 or 1 and the function $\gamma$ is unchanged.
\item It remains to show that the determinant of the lower right block matrix is $\pm 1$. Since the matrix $M_{\mathcal{F},\mathcal{F}^\perp}$ is still of the form~\eqref{eq:MF}, the elements $\{0,\dots,d-1,n+1\}$ are still a basis of $X$ (after the above permutations).
		Therefore by definition of $X^\perp$, the set $\{d,\dots,n\}$ is a basis of $X^\perp$.
		This implies that, up to a permutation of rows, the lower right block is an upper triagonal matrix with $1$-entries on the diagonal.
\qedhere
	\end{enumerate}
\end{proof}

We now apply Theorem \ref{thm:tropMLE} to the CHY model
 $X^\circ =  X(2,m)$, where $n = m(m-3)/2$. The tropical linear space
${\rm trop}(X)$ consists of ultrametrics on $m-1$ points \cite[Lemma 4.3.9]{MS}.
The matroid of $X$ is the graphic matroid of the complete graph $K_{m-1}$; see
\cite[Example 4.2.14]{MS}. 
Using (\ref{eq:generalmatrix}) with $k=2$,
 the  vertices of $K_{m-1}$ are labeled by $2,3,\ldots,m$.
The special edge $e=\{2,3\}$ corresponds to the hyperplane at infinity.
The matroid of $X^\perp$ is the cographic matroid of
the graph $K_{m-1}/e$ obtained by contracting $e$.
This is dual to the graphic matroid of $K_{m-1}/e$.
Vectors in ${\rm trop}(X)$ have the minimum 
attained twice on every circuit of $K_{m-1}$, where the edge $e$
has weight $0$. Vectors in ${\rm trop}(X^\perp)$ attain their minimum
twice on every cocircuit of $K_{m-1}/e$.
Thus tropical MLE amounts to writing
the vector $w$ as a sum of
two such minimum-attained-twice vectors.
The number of such decompositions equals $(m-3)!$.

\begin{example}[$m=6$]
	Following \cite[Example 2]{ST}, we consider the CHY model $X(2,6)$.
	This corresponds to an arrangement of $9$ planes in $\RR^3$, with
	six bounded regions, namely the tetrahedron of a  triangulated $3$-cube.
	We coordinatize this model by the matrix
(\ref{eq:generalmatrix}),	
		so that $e = \{2,3\}$ is the special edge of $K_5$.
	Our data are the	Mandelstam invariants 
	$$ u_{24} = t^{12},\, u_{25} = t^6,\, u_{26} = t^9, \,u_{34} = t^{12}, \,u_{35} = t^5, \,
	u_{36} = t, \, u_{45} = t^{10},\,
	u_{46} = t^{11}, \,u_{56} = t^3 .$$
	Hence the tropical data vector equals
	$ \,w \,= \, (w_{24}, w_{25}, \ldots, w_{45}) \, = \,
	(12, 6, 9, 12, 5, 1, 10, 11, 3) $.
	Our task is to find all additive decompositions
	$\,w \,=\, \hat q \,+\, (w-\hat q)$ where the two summands lie
		in the respective Bergman fans ${\rm trop}(X)$ and ${\rm trop}(X^\perp)$.
		One such decomposition equals
\begin{equation}
\label{eq:decomp}
w \,\,= \,\, \hat q \, + \, (w-\hat q) \,\, = \,\,
( 7,  5, 2,   0, 0, 0,   5,  2,  2) \,+ \,
( 5,  1, 7,  12, 5,1,   5, 9, 1).
\end{equation}
This solution is verified in Figure \ref{fig:planarduals}:
the minimum over each  circuit is attained at least twice.

\begin{figure}[h]
		\vspace{-0.09cm}
		\begin{center}
			\includegraphics[scale=0.5]{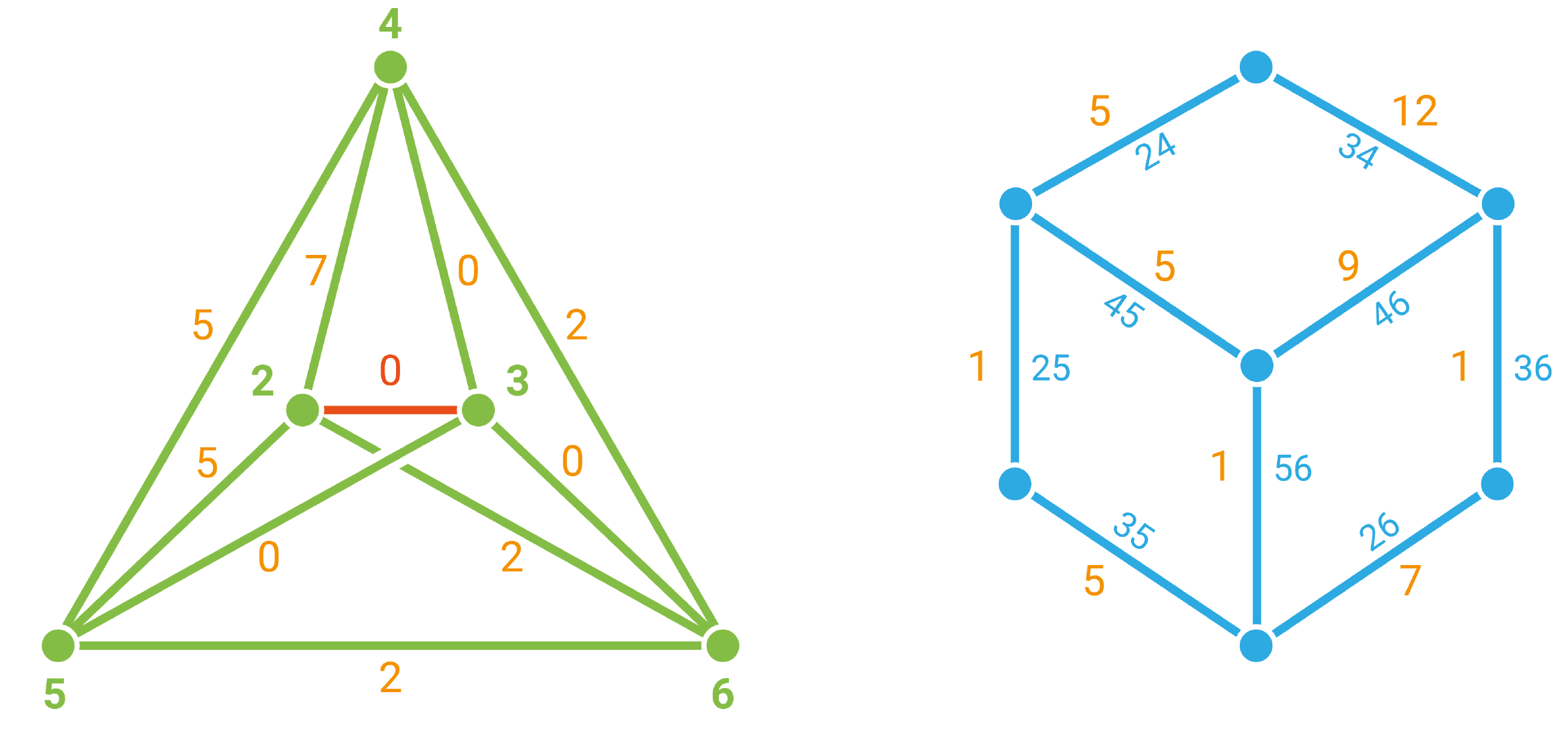} 
		\end{center}
		\vspace{-0.68cm}
		\caption{Contracting the edge $23$ in the green graph $K_5$ yields a planar graph
			whose dual is the blue graph on the right. The orange edge weights are the
			entries in (\ref{eq:decomp}).
			The key  property is that
			the minimum is attained at least twice in each cycle of the respective graph.
			\label{fig:planarduals}}
	\end{figure}

We find that	
	 the intersection 
	(\ref{eq:troptrop}) consists of six points. The tropical critical points $\hat q$  are
	$$ \begin{matrix}
	(0, 0, 8, 4, 2, 0, 2, 0, 0) \,,& &
	(0, 5, 2, 2, 0, 0, 0, 0, 2) \,,& &
	(1, 0, 8, 0, 2, 0, 0, 1, 0)\,, \\
	(2, 5, 2, 0, 0, 0, 2, 3, 2)\,, & &
	(7, 5, 2, 0, 0, 0, 5, 2, 2)\,, & &
	(9, 0, 8, 0, 2, 0, 0, 8, 0) .
	\end{matrix} $$
Each $\hat q$ gives a decomposition as in (\ref{eq:decomp}),
	where the two summands are  compatible with the cycles of the two graphs in
	Figure~\ref{fig:planarduals}.
		These solutions specify small arcs
	that lie in the six tetrahedra.
	These arcs converge  for $t \rightarrow 0$ with the given rates
		to  vertices  of the arrangement. 

What we have outlined here is	 the combinatorial theory of
 {\em tropical CHY scattering}. 	This works for all
 $m \geq 5$.	The soft limits of \cite{CUZ} arise as the
	very special case when
	the tropical Mandelstam invariants $w$  satisfy
	$w_{im} = 1$ and $w_{ij} = 0$ for $i \leq j \leq m-1$.
The key player in tropical CHY scattering
 is the space  ${\rm trop}(X)$ of ultrametric phylogenetic trees. In the case
	 $m=5$, this space is a cone
		over the Petersen graph.	
See \cite[\S 4.3]{MS} for details.
\end{example}

In this section we presented the theory of tropical MLE for linear models.
This raises the question of what happens when the model $X$ is 
 an arbitrary (nonlinear) projective variety in $\PP^n$. A partial answer based on homotopy techniques is given in the next section. 
The role of (\ref{eq:intersection}) is now played by the likelihood
correspondence. According to \cite[Theorem 1.6]{HS},
this is an $n$-dimensional subvariety in $\PP^n \times \PP^n$.
An ambitious goal is to determine the tropicalization of that subvariety.
The desired pairs $(w,\hat q)$ are points in
that {\em tropical likelihood correspondence}. 
This leads to very interesting geometry and combinatorics. For instance, it is closely related to
the Bernstein-Sato slopes studied by van der Veer and Sattelberger~\cite{VS}.

\section{Learning valuations numerically} \label{section8}

Previous sections developed two techniques for obtaining the ML degree of a very affine variety: 
Euler characteristics via stratified fibrations, and tropical geometry. Each method leads to a meaningful combinatorial description of the ML degree, 
but it requires significant combinatorial efforts. In this section we propose a numerical method which computes the tropical solutions $\hat{q}$ discussed in 
Sections \ref{section6} and \ref{section7} directly, while avoiding any combinatorial overhead. A decomposition of the ML degree as a positive sum of integers naturally emerges as a byproduct. We used this method to verify the multiplicities \eqref{eq:48Multiplicities} as detailed in Section~\ref{section6}.

Let $K = \CC \{\!\{ t\}\!\}$ and consider a very affine variety $X \subset (K^*)^n$ 
that is defined over $\CC$. Fix a data vector $u \in K^n$.
The problem of tropical MLE is to compute the coordinate-wise valuations 
$\hat{q} = {\rm val}_t(\hat{p})$ of the critical points $\hat{p}$ 
of the log-likelihood function ${\cal L}_u$ restricted to~$X$.
Note that each $\hat{q}$ is a vector in $\QQ^n$, so the output can be written in exact arithmetic.

In this section, we think of $t$ as a coordinate for $\CC^*$.
We assume for simplicity that $u \in \mathbb{C}[t,t^{-1}]^n$ is given by Laurent polynomials.
The following incidence variety is a curve:
\[
C \,\,=\,\, \bigl\{ (\hat{p}, t) \in X \times \CC^* ~|~ \hat{p} \text{ is a critical point of } ({\cal L}_{u(t)}){\big|}_X \bigr\}.
\]
 The generalization to the case where the $u_i$ are Laurent series, convergent in the punctured disk $\{ t \in \CC^* ~|~ |t| < 1 \}$, is straightforward. 
We assume that $u$ is sufficiently generic, so that:
\begin{itemize}
\item the projection map $\pi_{\CC^*}: C \to \CC^*$ is an MLdegree$(X)$-to-one branched covering,
\item the half-open real line segment $(0,1] \subset \CC^*$ avoids the branch locus of $\pi_{\CC^*}$. 
\end{itemize}

Any point $(\hat{p}(1),1)$ of the curve $ C$ lies on a unique path in 
$\pi_{\CC^*}^{-1}((0,1])$ which corresponds to
 a critical point $\hat{p}(t)=(\hat{p}_1(t),\ldots,\hat{p}_n(t)) \in K^n$. Each
coordinate of $\hat{p}(t)$ is a Puiseux series
\begin{equation} \label{eq:approxclass}
\hat{p}_i(t) = c_i t^{\hat{q}_i} + \text{ higher order terms},  
\end{equation}
where $\hat{q} = (\hat{q}_1,\ldots,\hat{q}_n)$ is its corresponding tropical critical point. 

For any real constant $t' \in (0,1]$ with $t'\ll 1$, the following approximation holds:
\begin{equation} \label{eq:approxval}
\log | \,\hat{p}_i(t') \,| \approx \,\,\log |c_i| + \hat{q}_i \cdot \log t' . 
\end{equation}
Hence, for all values of $t'$ that are small enough, 
the points $(a_{t'},b_{t'}) = (\log t',\log |\,\hat{p}_i(t')\,| )\in \RR^2$ lie approximately on a line with slope $\hat{q}_i = {\rm val}_t(\hat{p}_i(t))$. We wish to learn that slope.

Using standard numerical predictor-corrector techniques on the critical point equations, one can compute $(\hat{p}(t'),t') \in \pi^{-1}_{\CC^*}(t')$ for any $t' \in (0,1]$. This amounts to evaluating the solution $\hat{p}(t)$ at $t'$. 
 In our project, we used {\tt HomotopyContinuation.jl} \cite{HCjl} for this computation.

By evaluating $\hat{p}(t)$ at many $t'$ near $0$, we may approximate 
the $i$th coordinate $\hat q_i$ by fitting a line through the points $(a_{t'},b_{t'})$.
See (\ref{eq:approxval}). We find $\hat{q}$ by doing this for $i=1,2,\ldots,n$.
This discussion is summarized  in Algorithm \ref{algo:computetropicalcriticalpoints} for computing the tropical critical point $\hat{q}$. 

\begin{algorithm}[!htpb]
\SetKwIF{If}{ElseIf}{Else}{if}{then}{elif}{else}{}%
\DontPrintSemicolon
\SetKwProg{Computation of tropical critical points}{Computation of tropical critical points}{}{}
\LinesNotNumbered \smallskip
\KwIn{Data $u$ and a critical point $\hat{p}(1) \in (\CC^*)^n$ of the log-likelihood function~${\cal L}_{u(1)}$.}
\KwOut{The tropical critical point $\hat{q}$ associated to the solution $\hat{p}(t)$,
together with some measure $\rho$ of trust in this result.} \smallskip
\nl  With $ \hat{p}(1)$ as the start solution, use numerical predictor-corrector techniques to compute $\hat{p}(t')$ for a finite set $T \subset (0,1]$ of values $t'$, all sufficiently close to $0$. \; 
\nl Fit lines $\ell_i$ through the data points $\{(\log t, \log |\hat{p}_i(t)|) ~|~ t \in T \}$ for each coordinate $\hat{p}_i$, using standard regression techniques. \;
\nl Using any rationalization heuristic, such as the built-in function $\texttt{rationalize}$ in 
$\texttt{Julia}$, round the slope of the line $\ell_i$ to obtain the rational number $\hat{q}_i$. \;
\nl As a measure of trust, record the \emph{regression error} 
\begin{equation} \label{eq:regerr}
\rho \,\,:= \,\,\sum_{t' \in T} \bigl( \log |\hat{p}_i(t')| - ( \log |c_i| + \hat{q}_i \cdot \log t')  \bigr)^2.
\end{equation} \;
\vspace{-0.8cm}
\nl \Return{} $(\hat{q},\rho)$
\caption{Numerical computation of tropical critical points \label{algo:computetropicalcriticalpoints}}
\end{algorithm} 


\begin{example}[Soft limits]
\label{ex:trop37}
We apply this to the CEGM model $X(3,7)$. It is  parametrized by the 
$3 \times 3 $ minors $p_{ijk}$ of the matrix $M_{3,7}$ given in \eqref{eq:generalmatrix}. 
Consider the log-likelihood function 
\begin{equation}
\label{eq:loglike37}
{\cal L}_{u(t)} \,\,=\,\, \tilde{\cal L}_{3,7}(x,t) \,\,\,=
\sum_{1 \leq i < j < k \leq 6} \!\!\!\!  u_{ijk} \cdot \log p_{ijk}\,\,\, +
 \sum_{1 \leq i< j \leq 6} \!\!\!\! t \cdot u_{ij7} \cdot \log p_{ij7} 
 \end{equation}
for generic complex parameters $u_{ijk}$.
The numerical computation in \cite[\S 4]{ST} provides $ 1\,272$ start solutions $\hat{p}(1)$.
Each of these now serves as the input to Algorithm \ref{algo:computetropicalcriticalpoints}.
That algorithm performs the likelihood degeneration
(soft limit) purely numerically for $t \rightarrow 0$.

The $1\,272$ runs of Algorithm \ref{algo:computetropicalcriticalpoints} lead to only
$16$ distinct tropical critical points $\hat{q}$.
This means that different start solutions $\hat{p}(1)$ yield the same output.
The multiplicity of $\hat{q}$ in the tropical curve ${\rm trop}(C)$ is the
number of runs that yield output $\hat{q}$.
The coordinates and multiplicities of all tropical critical points
are shown in the columns of Table \ref{tab:37}. 
We stress that this table was computed blindly, without any prior knowledge
about the model $X(3,7)$.

Remarkably, one learns the geometry of the stratified fibration $\pi_{3,7}$  from the
output in Table~\ref{tab:37}.
The first column corresponds to the $1\,092$ regular solutions, i.e., those which converge in $X(3,7)$. 
The others correspond to $15$ groups of $12$ boundary solutions, whose limit for $t \rightarrow 0$ lies on the boundary in the tropical compactification of $X(3,7)$.
Thus, by using Algorithm \ref{algo:computetropicalcriticalpoints},
we  discover the ML degrees in Theorem \ref{thm:numbersk=3} 
in a purely automatic manner.

\begin{table}[!htpb]
\centering
\tiny
\setlength{\tabcolsep}{0.25cm}
\begin{tabular}{c|cccccccccccccccc}
$ p_{123} $ & 0 & 0 & 0 & 0 & 0 & 0 & 0 & 0 & 0 & 0 & 0 & 0 & 0 & 0 & 0 & 0\\ 
$ p_{124} $ & 0 & 0 & 0 & 0 & 0 & 0 & 0 & 0 & 0 & 0 & 0 & 0 & 0 & 0 & 0 & 0\\ 
$ p_{125} $ & 0 & 0 & 0 & 0 & 0 & 0 & 0 & 0 & 0 & 0 & 0 & 0 & 0 & 0 & 0 & 0\\ 
$ p_{126} $ & 0 & 0 & 0 & 0 & 0 & 0 & 0 & 0 & 0 & 0 & 0 & 0 & 0 & 0 & 0 & 0\\ 
$ p_{127} $ & 0 & 0 & 0 & 0 & 0 & 0 & 0 & 0 & 0 & 0 & 0 & 0 & 0 & 0 & 0 & 0\\ 
$ p_{134} $ & 0 & 0 & 0 & 0 & 0 & 0 & 0 & 0 & 0 & 0 & 0 & 0 & 0 & 0 & 0 & 0\\ 
$ p_{135} $ & 0 & 0 & 0 & 0 & 0 & 0 & 0 & 0 & 0 & 0 & 0 & 0 & 0 & 0 & 0 & 0\\ 
$ p_{136} $ & 0 & 0 & 0 & 0 & 0 & 0 & 0 & 0 & 0 & 0 & 0 & 0 & 0 & 0 & 0 & 0\\ 
$ p_{137} $ & 0 & 1 & 0 & 0 & -1 & 0 & 1 & 0 & 1 & 0 & 0 & 0 & 0 & -1 & 0 & -1\\ 
$ p_{145} $ & 0 & 0 & 0 & 0 & 0 & 0 & 0 & 0 & 0 & 0 & 0 & 0 & 0 & 0 & 0 & 0\\ 
$ p_{146} $ & 0 & 0 & 0 & 0 & 0 & 0 & 0 & 0 & 0 & 0 & 0 & 0 & 0 & 0 & 0 & 0\\ 
$ p_{147} $ & 0 & 0 & 0 & 1 & -1 & 1 & 0 & 0 & 0 & 0 & 0 & 0 & 0 & -1 & 1 & -1\\ 
$ p_{156} $ & 0 & 0 & 0 & 0 & 0 & 0 & 0 & 0 & 0 & 0 & 0 & 0 & 0 & 0 & 0 & 0\\ 
$ p_{157} $ & 0 & 0 & 0 & 0 & -1 & 0 & 0 & 1 & 0 & 0 & 1 & 1 & 0 & -1 & 0 & -1\\ 
$ p_{167} $ & 0 & 0 & 1 & 0 & -1 & 0 & 0 & 0 & 0 & 1 & 0 & 0 & 1 & -1 & 0 & -1\\ 
$ p_{234} $ & 0 & 0 & 0 & 0 & 0 & 0 & 0 & 0 & 0 & 0 & 0 & 0 & 0 & 0 & 0 & 0\\ 
$ p_{235} $ & 0 & 0 & 0 & 0 & 0 & 0 & 0 & 0 & 0 & 0 & 0 & 0 & 0 & 0 & 0 & 0\\ 
$ p_{236} $ & 0 & 0 & 0 & 0 & 0 & 0 & 0 & 0 & 0 & 0 & 0 & 0 & 0 & 0 & 0 & 0\\ 
$ p_{237} $ & 0 & 0 & 0 & 0 & -1 & 0 & 0 & 0 & 0 & 1 & 0 & 1 & 0 & -1 & 1 & -1\\ 
$ p_{245} $ & 0 & 0 & 0 & 0 & 0 & 0 & 0 & 0 & 0 & 0 & 0 & 0 & 0 & 0 & 0 & 0\\ 
$ p_{246} $ & 0 & 0 & 0 & 0 & 0 & 0 & 0 & 0 & 0 & 0 & 0 & 0 & 0 & 0 & 0 & 0\\ 
$ p_{247} $ & 0 & 0 & 0 & 0 & -1 & 0 & 0 & 1 & 1 & 0 & 0 & 0 & 1 & -1 & 0 & -1\\ 
$ p_{256} $ & 0 & 0 & 0 & 0 & 0 & 0 & 0 & 0 & 0 & 0 & 0 & 0 & 0 & 0 & 0 & 0\\ 
$ p_{257} $ & 0 & 0 & 1 & 0 & -1 & 1 & 1 & 0 & 0 & 0 & 0 & 0 & 0 & -1 & 0 & -1\\ 
$ p_{267} $ & 0 & 1 & 0 & 1 & -1 & 0 & 0 & 0 & 0 & 0 & 1 & 0 & 0 & -1 & 0 & -1\\ 
$ p_{345} $ & 0 & 0 & 0 & 0 & 0 & 0 & 0 & 0 & 0 & 0 & 0 & 0 & 0 & 0 & 0 & 0\\ 
$ p_{346} $ & 0 & 0 & 0 & 0 & 0 & 0 & 0 & 0 & 0 & 0 & 0 & 0 & 0 & 0 & 0 & 0\\ 
$ p_{347} $ & 0 & 0 & 1 & 0 & -1 & 0 & 0 & 0 & 0 & 0 & 1 & 0 & 0 & -1 & 0 & 0\\ 
$ p_{356} $ & 0 & 0 & 0 & 0 & 0 & 0 & 0 & 0 & 0 & 0 & 0 & 0 & 0 & 0 & 0 & 0\\ 
$ p_{357} $ & 0 & 0 & 0 & 1 & -1 & 0 & 0 & 0 & 0 & 0 & 0 & 0 & 1 & 0 & 0 & -1\\ 
$ p_{367} $ & 0 & 0 & 0 & 0 & 0 & 1 & 0 & 1 & 0 & 0 & 0 & 0 & 0 & -1 & 0 & -1\\ 
$ p_{456} $ & 0 & 0 & 0 & 0 & 0 & 0 & 0 & 0 & 0 & 0 & 0 & 0 & 0 & 0 & 0 & 0\\ 
$ p_{457} $ & 0 & 1 & 0 & 0 & 0 & 0 & 0 & 0 & 0 & 1 & 0 & 0 & 0 & -1 & 0 & -1\\ 
$ p_{467} $ & 0 & 0 & 0 & 0 & -1 & 0 & 1 & 0 & 0 & 0 & 0 & 1 & 0 & 0 & 0 & -1\\ 
$ p_{567} $ & 0 & 0 & 0 & 0 & -1 & 0 & 0 & 0 & 1 & 0 & 0 & 0 & 0 & -1 & 1 & 0\\ \hline 
 $m_{\hat{q}}$ & 1\,092 & 12 & 12 & 12 & 12 & 12 & 12 & 12 & 12 & 12 & 12 & 12 & 12 & 12 & 12 & 12
\end{tabular}
\caption{Columns represent the $16$ numerically obtained tropical critical points of the log-likelihood function in the soft limit for $X(3,7)$. The last row represents their multiplicities.}
\label{tab:37}
\end{table}

Recall that the action of $(\CC^*)^7$ on the Pl\"ucker coordinates
 tropicalizes to an additive action of $\RR^7$ on $\RR^{35}$. Modulo this $\RR^7$-action,
  each column of Table \ref{tab:37}, except the first one,  can be represented by a $\{0,1\}$ vector 
  that has precisely three nonzero entries.  For instance, the second column has its entries $1$
  in the rows $p_{137}, p_{267}, p_{457}$, so it identifies the divisor
  $S_{(13),(26),(45)}$ in the stratification of $X(3,6)$.
  This corresponds to a codimension $3$ matroid stratum for $k=3,m=7$. In this manner,
 one automatically learns the $ 15$ strata of type I in Table \ref{table:strata}. On average, the regression error \eqref{eq:regerr} for all $1\,272$ paths was $\sim 0.0045$, the largest one being $\sim 0.162$. The computation took no more than a couple of minutes. 
\end{example}

Extending  Example \ref{ex:trop37} to a more challenging scenario, we applied Algorithm \ref{algo:computetropicalcriticalpoints} to each of the $5 \,211 \,816$ solutions to the maximum likelihood equations \eqref{eq:critpt48} for $k=4,m=8$. This led us to the multiplicities \eqref{eq:48Multiplicities} and the derivation of Theorem* \ref{conj:48} as discussed in Section~\ref{section6}. 

 \appendix \section{Finite field methods \hfill (Appendix by Thomas Lam)} \label{appendixA}

In this appendix we present a proof of Theorem \ref{thm:numbersk=3} 
that is based on point counting over finite fields and the Weil conjectures. 
This proof predates the work reported in this article.
We compute the Euler characteristic of $X(3,m) = {\rm Gr}(3,m)^\circ/(\CC^*)^m$ for small
values of  $m$.

\begin{theorem} \label{thm:appA}
The Euler characteristics of the configuration spaces $X(3,6), X(3,7), X(3,8)$ and $X(3,9)$ are 
$\,26,\, 1\,272, \,188\,112$ and $74\,570\,400$ respectively.
\end{theorem}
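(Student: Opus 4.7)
My plan is to apply the Weil conjectures. Since $X(3,m)$ is a smooth very affine variety whose compactly supported $\ell$-adic cohomology is of pure Tate type (it arises as the free torus quotient of the complement of a hyperplane arrangement in an affine cell of the Grassmannian, hence admits a smooth compactification with normal crossings boundary whose strata are products of tori), the Lefschetz trace formula gives $|X(3,m)(\mathbb{F}_q)| = P_m(q)$ for a polynomial $P_m \in \mathbb{Z}[q]$, and moreover
$$\chi(X(3,m)(\mathbb{C})) \,\,=\,\, P_m(1).$$
This is the principle underlying Katz's theorem relating polynomial point counts to Euler characteristics for varieties of Tate type.

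To compute $P_m(q)$, I would use the identification $X(3,m) = \{(P_1,\dots,P_m) \in (\mathbb{P}^2)^m : \text{no three collinear}\}/\mathrm{PGL}_3$, combined with the fact that $\mathrm{PGL}_3$ acts freely on linearly general configurations for $m \geq 4$. Hence
$$P_m(q) \,\,=\,\, \frac{N_m(q)}{|\mathrm{PGL}_3(\mathbb{F}_q)|}, \qquad |\mathrm{PGL}_3(\mathbb{F}_q)| \,=\, q^3(q^3-1)(q^2-1),$$
where $N_m(q)$ counts ordered $m$-tuples of points in $\mathbb{P}^2(\mathbb{F}_q)$ with no three collinear. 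I build $N_m(q)$ inductively via the deletion map: given $(P_1,\dots,P_m)$ in general position, the number of legal extensions $P_{m+1}$ equals $|\mathbb{P}^2(\mathbb{F}_q)| - |\mathcal{B}(P_1,\dots,P_m)(\mathbb{F}_q)|$, which depends only on the intersection lattice of the discriminantal line arrangement $\mathcal{B}$. Stratifying the base by discriminantal matroid type, as in Section~\ref{section4}, yields
$$N_{m+1}(q) \,\,=\, \sum_{M} N_M(q) \cdot \bigl(\, q^2+q+1 \,-\, \ell_M(q)\,\bigr),$$
where $M$ ranges over the realizable matroid strata of labeled $m$-tuples, $N_M(q)$ counts $\mathbb{F}_q$-realizations of type $M$, and $\ell_M(q)$ polynomially counts $\mathbb{F}_q$-points on the associated line arrangement.

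The main obstacle is the combinatorial bookkeeping for $m=9$: one must enumerate all realizable rank-$3$ matroids on $m=8$ labeled elements that arise as $\mathcal{B}(P_1,\dots,P_8)$, with the correct labeled multiplicities, and polynomially count each realization space over $\mathbb{F}_q$. For $m \leq 8$ the only strata needed are those listed in Table~\ref{table:strata}, and a direct inclusion-exclusion quickly yields $P_6$, $P_7$, $P_8$. For $m=9$ one uses the matroid catalogue of Theorem~\ref{thm:matroid_ml}, with non-uniform strata contributing through an analog of~\eqref{eq:chirho} adapted to point counts. Assembling the polynomial $P_m(q)$ in each case and checking that it has integer coefficients (which confirms the Tate-type hypothesis a posteriori), I then evaluate at $q=1$ to obtain $\chi(X(3,m)) = P_m(1)$, yielding the four numbers $26$, $1\,272$, $188\,112$, and $74\,570\,400$.
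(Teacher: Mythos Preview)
Your proposal contains a substantive error: the assertion that $|X(3,m)(\mathbb{F}_q)|$ is a polynomial in $q$, equivalently that the cohomology is of pure Tate type, is false for $m\geq 8$. The correct formulas, due to Glynn and Iampolskaia--Skorobogatov--Sorokin and quoted in the appendix, show that for $m=8$ the point count contains a term $840\,b(q)$, and for $m=9$ additional terms in $d(q)$ and $e(q)$, where
\[
b(q)=\#\{x\in\mathbb{F}_q:x^2+x+1=0\},\quad d(q)=\#\{x\in\mathbb{F}_q:x^2+x-1=0\},\quad e(q)=\#\{x\in\mathbb{F}_q:x^2+1=0\}.
\]
These take the values $0$ or $2$ according to congruence conditions on $p$, so the point count is only a quasi-polynomial. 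Your justification for Tate type (``strata are products of tori'') is not correct either: matroid realization spaces are not of that shape in general, and indeed certain strata in your recursion have point counts that are not polynomial in $q$---this is precisely where the character terms $b,d,e$ originate. Consequently your a~posteriori check ``integer coefficients confirms Tate type'' would fail.

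The paper's argument deals with this subtlety directly. Rather than deriving the point counts recursively, it cites the known closed formulas, then chooses primes $p$ satisfying $p\equiv 1\bmod 3$, $p\equiv\pm 1\bmod 5$, $p\equiv 1\bmod 4$ (which exist by Dirichlet), so that $b(q)=d(q)=e(q)=2$ and the count becomes an honest polynomial for all powers of that fixed $p$. One then reads off $\chi$ from the zeta function via Grothendieck--Lefschetz, which for a polynomial point count $\sum a_iq^i$ gives $\chi=\sum a_i$, i.e.\ the value at $q=1$. Your overall strategy (Weil conjectures, substitute $q=1$) is the right one, but the execution must acknowledge the non-Tate pieces and either restrict the prime as Lam does, or track the quasi-polynomial terms and observe that each contributes $2$ to the Euler characteristic (the Euler characteristic of the zero locus of a separable quadratic over~$\CC$).
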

\begin{proof}
The variety  $X(k,m)$ can be defined over a finite field $\mathbb{F}_q$. In sufficiently large characteristic, $X(k,m)_{\mathbb{F}_q}$ is smooth. The number of points in $X(3,6)_{\mathbb{F}_q}$,  $X(3,7)_{\mathbb{F}_q}$, $X(3,8)_{\mathbb{F}_q}$, $X(3,9)_{\mathbb{F}_q}$ was computed by Glynn \cite{Gly}, and Iampolskaia, Skorobogatov, Sorokin \cite{ISS}, and can be found in the paper of Skorobogatov \cite[\S 5]{Sko}. For $q = p^n$ with $p >3$, they are 
\begin{align}\label{eq:main}
\begin{split}
\#X(3,6)_{\mathbb{F}_q} &\,=\,(q-2)(q-3)(q^2-9q+21) \\
\#X(3,7)_{\mathbb{F}_q}  &\,=\, (q-3)(q-5)(q^4 -20q^3 + 148q^2 - 468q + 498) \\
\#X(3,8)_{\mathbb{F}_q}  &\,=\, (q- 5)(q^7-43q^6 + 788q^5 - 7937q^4 + 47097q^3 - 162834q^2
 \\ & \qquad + 299280q - 222960) 
+ 840b(q) \\
\#X(3,9)_{\mathbb{F}_q}   &\,=\, q^{10} - 75q^9 + 2530q^8 - 50466q^7 + 657739q^6 - 5835825q^5
+ 35563770q^4 \\& \qquad - 146288034q^3 + 386490120q^2 - 588513120q 
+389442480 \\&\qquad +840(9q^2 - 243q + 1684)b(q) + 30240(9d(q) + 2e(q)).
\end{split}
\end{align}
We have omitted the terms involving $a(q)$ and $c(q)$ in \cite{Sko} which vanish when $p > 3$.   The functions $b(q),d(q),e(q)$ count the number of solutions of certain quadratic equations in $\mathbb{F}_q$:
\begin{align*}
b(q) &\,\,=\,\, \#\{x \in \mathbb{F}_q ~|~ x^2+x+1 = 0\}, \\
d(q) &\,\,=\,\, \#\{x \in \mathbb{F}_q ~|~ x^2+x-1 = 0\}, \\
e(q) &\,\,=\,\, \#\{x \in \mathbb{F}_q ~|~ x^2+1 = 0\}.
\end{align*}

We shall show that the Euler characteristic of $X(k,m)$ is obtained by setting $b(q)=d(q)=e(q) = 2$ and then $q = 1$ in \eqref{eq:main}. Note that $b(q)=d(q)=e(q) = 2$ holds if
\begin{equation}\label{eq:cong}
p  \equiv 1 \mod 3, \qquad p \equiv \pm 1 \mod 5, \qquad p \equiv 1 \mod 4.
\end{equation}
By Dirichlet's Theorem, such primes exist.  Fix $q = p^n$ such that $X(k,m)$ is smooth in characteristic $p$, and such that \eqref{eq:cong} holds.
Consider the zeta function 
$$
Z(q)\,\,:=\,\, \exp\left( \sum_{n=1}^\infty \frac{\#X(k,m)_{\mathbb{F}_{q^n}}}{n} q^{-ns} \right).
$$
It follows from the Grothendieck-Lefschetz trace formula \cite{Del} that $Z(q)$ is a rational function in the variable $q^{-s}$. Moreover, it can be written in the form $Z = P/Q$, where $$
\deg(P) \,=\, \dim H^{{\rm odd}}_c \bigl(X(k,m)_{\mathbb{F}_q}, \bar\QQ_\ell \bigr), 
\qquad \deg(Q) \,=\, \dim H^{{\rm even}}_c \bigl(X(k,m)_{\mathbb{F}_q}, \bar\QQ_\ell \bigr)
$$
are the total dimensions of odd (resp., even) compactly supported \'etale ($\ell$-adic) cohomology.  

By a standard argument in algebraic geometry
(see for example \cite[Theorem 20.5 and Theorem 21.1]{Mil}), the \'etale cohomology of $X(k,m)_{\mathbb{F}_q}$ is isomorphic to the \'etale cohomology of the complex algebraic variety $X(k,m)$, which is in turn isomorphic to the singular cohomology of the complex manifold $X(k,m)_{\CC}$.  Since $X(k,m)$ is smooth and even-dimensional as a real manifold, its compactly supported Euler characteristic is the same as its Euler characteristic.  We conclude that the Euler characteristic is 
$\, \chi(X(k,m)) \,=\, -\deg(Z(q))$.

On the other hand, $Z(q)$ is easy to write down when $\#X(\mathbb{F}_{q})$ is a polynomial in $q$,
as is the case in (\ref{eq:main}).
  Each term $+q^a$ (resp.~$-q^a$) in $\#X(k,m)_{\mathbb{F}_q}$ contributes a linear factor to the denominator (resp. numerator) of $Z(q)$.  Thus $-\deg(Z(q))$ is equal to the evaluation of the polynomial $\#X(k,m)_{\mathbb{F}_q}$ at $q = 1$.  
  By substituting $q=1$ into (\ref{eq:main}) along with 
  $b(q)=d(q)=e(q) = 2$, one obtains the positive integers stated  in Theorem \ref{thm:appA}.
  \end{proof}

 \begin{remark} 
 We point out that each of the numbers  $26, 1\,272, 188\,112, 74\,570\,400$ has one large prime factor, $13, 53,3\,919, 10\,357$. This seems unrelated to the geometry in Section~\ref{section5}.
 \end{remark}

\begin{footnotesize}

 \bigskip \bigskip

\noindent Daniele Agostini, 
Eberhard Karls Universit\"at T\"ubingen, 
\hfill  {\tt daniele.agostini@uni-tuebingen.de}

\noindent Taylor Brysiewicz, Western University,
\hfill  {\tt tbrysiew@uwo.ca}

\noindent Claudia Fevola,  MPI-MiS Leipzig,
\hfill  {\tt claudia.fevola@mis.mpg.de}

\noindent Lukas K\"uhne,  Universit\"at Bielefeld,
\hfill  {\tt lkuehne@math.uni-bielefeld.de}

\noindent Bernd Sturmfels,
MPI-MiS Leipzig,
\hfill {\tt bernd@mis.mpg.de}

\noindent Simon Telen,  MPI-MiS Leipzig,
\hfill  {\tt simon.telen@mis.mpg.de}

\medskip

\noindent Thomas Lam, University of Michigan,
\hfill  {\tt tfylam@umich.edu}

\end{footnotesize}


\begin{thebibliography}{4}

\setlength{\itemsep}{-0.4mm}


\bibitem{ZariskiFrames}
M. Barakat, T. Kuhmichel and M. Lange-Hegermann:
\emph{{$\mathtt{ZariskiFrames}$ -- (Co)frames/Locales of Zariski closed/open
		subsets of affine, projective, or toric varieties}}, (2018--2021),
(\url{https://homalg-project.github.io/pkg/ZariskiFrames}).

\bibitem{BK}
M. Barakat, L. K\"uhne:
{\em Computing the nonfree locus of the moduli space of arrangements and {T}erao's freeness conjecture},
{\tt arXiv:2112.13065}, to appear in Mathematics of Computation.

\bibitem{BokStu}
J.~Bokowski and B.~Sturmfels:
{\em Computational Synthetic Geometry}, Lecture Notes in Math.~1355, Springer, Berlin (1989).

\bibitem{BB}
M.~M.~Bayer and K.A.~Brandt:
{\em Discriminantal arrangements, fiber polytopes and formality}, Journal of Algebraic Combinatorics,~6(3), 229--246 (1997).

\bibitem{BRT}
P.~Breiding, K.~Rose and S.~Timme:
{\em Certifying zeros of polynomial systems using interval arithmetic}, {\tt arXiv:2011.05000}.

\bibitem{HCjl}
P.~Breiding and S.~Timme:
{\em HomotopyContinuation.jl: A Package for Homotopy Continuation in Julia}, Math.~Software -- ICMS 2018, 458--465, Springer International Publishing (2018).

\bibitem{BHK} 
T.~Brysiewicz, H.~Eble and L.~K\"uhne: 
{\em Enumerating chambers of hyperplane arrangements with symmetry}, {\tt arXiv:2105.14542}.

\bibitem{CEGM}
F.~Cachazo, N.~Early, A.~Guevara and S.~Mizera:
{\em Scattering equations: from projective spaces to 	tropical Grassmannians},
J.~High Energy Phys.~{\bf 39} (2019).

\bibitem{CHY}
F.~Cachazo, S.~He and E.~Y. Yuan:
{\em Scattering equations and Kawai-Lewellen-Tye orthogonality},
Physical Review D {\bf 90} (2014) 065001.

\bibitem{CUZ} F.~Cachazo, B.~Umbert and Y.~Zhang:
{\em Singular solutions in soft limits},
 J.~High Energy Phys.~{\bf 148} (2020).

\bibitem{CHKS}
F.~Catanese, S.~Ho\c{s}ten, A. Khetan and B.~Sturmfels:
{\em The maximum likelihood degree},
 American Journal of Mathematics {\bf 128} (2006) 671--697. 
 
\bibitem{Crapo}
H.~Crapo: {\em The combinatorial theory of structures}, in: {\em Matroid theory} (A.~Recski and L.~Locaśz eds.), Colloq. Math. Soc. János Bolyai {\bf 40}, 107--213, North- Hokkand, Amsterdam-New York, 1985.

\bibitem{Del} P.~Deligne: {\em Weil's conjecture}, II. Publ. Math. IHES {\bf 52} (1980) 137--252.

\bibitem{Falk} M.~Falk: {\em A note on discriminantal arrangements}, Proc.~Amer.~Math.~Soc.~{\bf 122} (1994) 1221--1227.

\bibitem{Gly} D.~Glynn: {\em Rings of geometries, II}, J. Comb. Theory A {\bf 49} (1988) 26--66.

\bibitem{GR} E.~Gross and J.~Rodriguez:
{\em Maximum likelihood geometry in the presence of data zeros}, ISSAC 2014 --  39th 
Internat.~Symposium on Symbolic and Algebraic Computation, 232--239, ACM, New York (2014). 

\bibitem{Huh13} J.~Huh: {\em The maximum likelihood degree of a very affine variety},
Compos.~Math.~{\bf 149} (2013) 1245--1266.

\bibitem{HS} J.~Huh and B.~Sturmfels: {\em Likelihood geometry}, Combinatorial Algebraic Geometry, 
Lecture Notes in Mathematics 2108, Springer Verlag,  63--117, (2014).

\bibitem{ISS} A.~Iampolskaia, A.~Skorobogatov and E.~Sorokin:
{\em Formula for the number of [9,3] MDS codes over finite fields},
IEEE Trans. Info. Theory {\bf 41} (1996) 1667--1671.

\bibitem{Koizumi} H.~Koizumi, Y.~Numata and A.~Takemura: {\em On intersection lattices of hyperplane arrangements generated by generic points}, Annals of Combinatorics {\bf 16} (2012) 789--813.

\bibitem{alcove}
M. Leuner: \emph{$\mathtt{alcove}$ -- algebraic combinatorics package for
	$\mathsf{GAP}$}, 2013--2020, (\url{https://github.com/martin-leuner/alcove}).

\bibitem{MS} D.~Maclagan and B.~Sturmfels: 
{\em Introduction to Tropical Geometry},
Graduate Studies in Mathematics, Vol 161, American Mathematical Society, 2015. 

\bibitem{MMIB_DB}
Y. Matsumoto, S. Moriyama, H. Imai and D. Bremner:
\emph{Database of matroids}, 2012,
(\url{http://www-imai.is.s.u-tokyo.ac.jp/~ymatsu/matroid/index.html}).

\bibitem{MMIB}
Y. Matsumoto, S. Moriyama, H. Imai and D. Bremner:
\emph{Matroid enumeration for incidence geometry}, Discrete Comput. Geom.
\textbf{47} (2012) 17--43.

\bibitem{Mil} J.S.~Milne: {\em Lectures on \'Etale Cohomology}, 202 pages (2013),
available at {\tt{www.jmilne.org/math/}}.

\bibitem{OS} L.~Oeding and S.~V.~Sam: {\em Equations for the fifth secant variety of Segre products of projective spaces}, Exp. Math. \textbf{25} (2016) 94--99.

\bibitem{OT}
P.~Orlik and H.~Terao: {\em Arrangements of Hyperplanes}, Grundlehren der Mathematischen Wissenschaften,
vol 200, Springer-Verlag, Berlin (1992).

\bibitem{Ryb}
G.~Rybnikov: {\em On the fundamental group of the complement of a complex
hyperplane arrangement},  Functional Analysis and its Applications~{\bf 45} (2011) 137--148.

\bibitem{SY}
S.~Settepanella and S.~Yamagata: {\em A linear condition for non-very generic discriminantal arrangements}, (2022),  {\tt arXiv:2205.04664}.
 
\bibitem{Sko}  A.~N.~Skorobogatov: {\em On the number of representations of matroids
over finite fields}, Designs, Codes and Cryptography {\bf 9} (1996) 215--226.

\bibitem{Stanley}
R.~P.~Stanley: {\em An introduction to hyperplane arrangements},  Geometric combinatorics, 
volume 13 of IAS/Park City Math. Ser.,  389--496. Amer.~Math.~Soc., Providence, RI (2007).


\bibitem{PS} B.~Sturmfels and L.~Pachter:
{\em Algebraic Statistics for Computational Biology},
Cambridge University Press (2005).

\bibitem{ST} B.~Sturmfels and S.~Telen:
{\em Likelihood equations and scattering amplitudes},
Algebraic Statistics, {\bf 12.2}, (2021) 167--186.

\bibitem{VS} R.~van der Veer and A-L.~Sattelberger:
{\em Maximum likelihood estimation from a tropical 
	and a Bernstein-Sato perspective}, International Mathematics Research Notices, (2022), rnac016, \url{https://doi.org/10.1093/imrn/rnac016}.

	
\bibitem{Zas75}
T.~Zaslavsky:
{\em Facing up to arrangements: face-count formulas for partitions of
space by hyperplanes}, {\em Mem. Amer. Math. Soc.} {\bf 1} (1975), no.~154.
\end{thebibliography}
\end{document}